\newcommand{\abs}[1]{\left\vert#1\right\vert}
\newcommand{\px}[1][x]{\partial_{#1}}
\newcommand{\dx}[1][x]{\,{\rm d}#1}
\newcommand{\cpt}[2][t]{{}_CD^{#2}_{0,#1}}
\newcommand{\mfrac}[1][2]{\frac{1}{2}}
\newcommand{\sfrac}[2]{ {#1}/{#2}}
\title{Second-order numerical methods  for multi-term  fractional
differential equations: Smooth and non-smooth solutions
\thanks{This work was supported by the MURI/ARO on ``Fractional PDEs for Conservation Laws and Beyond: Theory, Numerics and  Applications  (W911NF-15-1-0562)'',  and  also  by  NSF (DMS  1216437). The second author of this work was also partially supported by a
start-up fund from WPI.}
}
\author{Fanhai Zeng\thanks{Division of Applied Mathematics, Brown University, Providence RI, 02912
(fanhai\_zeng@brown.edu, george\_karniadakis@brown.edu).}
\and Zhongqiang Zhang\thanks{Department of Mathematical Sciences, Worcester Polytechnic Institute, Worcester  MA, 01609 (zzhang7@wpi.edu).}
 \and George Em Karniadakis$^\dag$
}
\begin{document}

\maketitle

\begin{abstract}
Starting with the asymptotic expansion of the error equation
of the shifted Gr\"{u}nwald--Letnikov formula,
we derive a new modified weighted shifted Gr\"{u}nwald--Letnikov (WSGL) formula by introducing appropriate correction terms.
We then apply one special case of the modified WSGL formula to solve multi-term fractional
ordinary and partial differential equations, and we prove the linear stability and
second-order convergence for both smooth and non-smooth solutions.
We show theoretically and numerically that
numerical solutions up to certain accuracy can be obtained with only a few correction terms.
Moreover, the correction terms can be tuned according to the fractional derivative orders without explicitly knowing
the analytical solutions.
Numerical simulations verify the theoretical results and demonstrate that the new
formula leads to better performance
compared to other known numerical approximations with similar resolution.
\end{abstract}

\begin{keywords}
FODEs, time-fractional diffusion-wave equation,
shifted Gr\"{u}nwald--Letnikov formula,
low regularity.
\end{keywords}

\begin{AMS}
26A33, 65M06, 65M12, 65M15, 35R11
\end{AMS}


\section{Introduction}\label{sec1}
The aim of this work is to provide an effective numerical method to solve
multi-term  fractional differential equations (FDEs),
where more than one  fractional differential operator is involved,
with high-order accuracy  for both smooth and non-smooth solutions.

Multi-term  FDEs  are motivated by their
flexibility to describe complex multi-rate physical
processes, see  e.g. \cite{Diethelm-B10,JiangLiu-etal12b,Luchko11,Pod-B99}.
Moreover, it is not  straightforward to extend the known numerical methods for single-term FDEs to solve multi-term FDEs.
Specifically, we find that:
(i) Some numerical methods
for single-term FDEs can be  extended to multi-term FDEs, but their numerical
stability and convergence analysis are not easy to prove; (ii)
Very low accurate numerical solutions may be obtained by extending the majority of the known numerical
methods for single-term FDEs to multi-term FDEs due to their often unreasonable requirement on the
high regularity of the solutions.


Existing numerical methods for FDEs
can be broadly divided into two classes.
\begin{itemize}
\item[(a)] {\em FDEs with smooth solutions}: The majority of numerical methods
have been developed   for single-term
FDEs with smooth solutions, in which    the fractional derivative
operators in these equations are discretized by the  (shifted)
Gr\"{u}nwald--Letnikov (GL) formula \cite{ChenLTA07,GaoSS15,MeeTad04,Pod-B99},
the L1 method and its modification
\cite{LinXu07,ZhangSunLiao14},
the weighted shifted Gr\"{u}nwald--Letnikov (WSGL) formulas \cite{TianZD14,WangVong14b},
the (weighted) fractional central difference methods
\cite{CelikDuman12,DingLC14a,DingLC14b,ZhaoSH14},
the fractional linear multi-step methods
\cite{CueLubPal06,Lub86,Yuste06,Zeng14,ZengLLT13,ZengLLT15,ZengZK16},
the spectral approximations \cite{ZayKar14b,ZengZK15,ZhengLATS15},
and so on \cite{ChenDeng14,ForMR13,MusMcL13,Sousa12}.
Some of these methods have been extended to solve
multi-term   FDEs with smooth solutions,
such as  the L1 method in time with spatial discretization by
the finite difference method (see e.g. \cite{Liu-etal13,RenSun14})
and   finite element method (see e.g. \cite{JinLaz-etal15}),
the predictor-corrector method in time with finite difference methods
in space \cite{Liu-etal13,YeLiu14}, {and some others \cite{FordCon09,PdeasTamme11},
just to name a few.}

\item[(b)]  {\em FDEs with non-smooth solutions}: Generally speaking, the analytical solutions
to FDEs are not smooth in real applications. For example, even for smooth inputs,
the solutions to FDEs usually have a weak singularity at the boundary, see e.g.
\cite{Diethelm-B10,JinZhou14,Luchko11,MaoShen16,WangZhang15,ZhangZK15}.
Therefore, the aforementioned numerical methods will produce numerical
solutions of low accuracy when applied to solve these FDEs. In order to derive numerical schemes
of uniformly high-order convergence for FDEs with non-smooth solutions,
several approaches have been proposed:
\begin{itemize}
\item[(b1)]   Use nonuniform/refined grids in the discretization of the fractional operators, see e.g. \cite{McLMus07,MusMcL13,QuiYus13,ZhangSunLiao14}). With a geometrically graded mesh, one can adaptively resolve a weak singularity at the endpoint.
\item[(b2)]  Separate the solution $U$ of the considered FDE into two parts of
$U^{(1)}$  and $U^{(2)}$, satisfying
    $U=U^{(1)}+U^{(2)}$ (see e.g. \cite{Lub86,Zeng14,ZengLLT15}). Then,
    a numerical scheme is designed such that high accuracy is obtained for both $U=U^{(1)}$ and $U=U^{(2)}$.

 \item[(b3)] Use a non-polynomial (or singular) basis function to  capture the singularity
     of   the solutions to FDEs
 (see e.g. \cite{CaoHX03,ChenSW14,EsmSL11,ForMR13,JinZhou14,MaoShen16,ZayKar14b,ZengZK15,ZhangZK15}), such that
 a high-order scheme is effective.

\end{itemize}
\end{itemize}

We also note that many numerical methods  for FDEs
may impose some unreasonable restrictions on  the solutions.
For example, the L1 method (see e.g. \cite {LinXu07}) and the interpolation method
(e.g. \cite{Diethelm-B10,Sousa12}) require that the solution of the considered
FDE is sufficiently smooth such that the expected accuracy can be realized,
but as we already stated solutions of FDEs are usually not smooth, see e.g. \cite{Diethelm-B10}.
The second-order WSGL formula
(see, e.g. \cite{TianZD14,WangVong14b}) requires that the solution and its first (and/or second)
derivative have vanishing values at the boundary;  see also the corresponding works in
\cite{CelikDuman12,ChenDeng14,DingLC14a,DingLC14b,GaoSS15,ZengLLT13,ZhaoSH14,ZhouTD13}.
Consequently, the  convergence rate of these methods can be  low
even if the solutions are sufficiently smooth. In particular, we
  show numerically that the second-order
WSGL formula in \cite{TianZD14,WangVong14b}
does not exhibit global second-order  accuracy for smooth solutions;
see  numerical results in  Table \ref{tb1-1}.
The theoretical explanation can be found in Section 2.

To remove these restrictions, we will adopt  the approach (b2) to solve
multi-term FODEs and multi-term time-fractional anomalous diffusion equations
with smooth and non-smooth solutions. We note that
the analytical solutions of FODEs and
time-fractional differential equations  usually have the  form
\begin{eqnarray}\label{eq:ut}
U(t)=U^{(1)}(t)+U^{(2)}(t),\, U^{(1)}(t)=\sum_{r=0}^mc_mt^{\sigma_r},\,
 U^{(2)}(t)=c_{m+1}t^{\sigma_{m+1}}+u(t)t^{\sigma_{m+2}},  \quad
\end{eqnarray}
where $u(t)$ is a uniformly continuous function over the interval $[0,T]$, $T>0$ and
$0\leq \sigma_r<\sigma_{r+1},\, r=1,2,...,m$ and
$m$ is a   positive integer,
see e.g. \cite{CueLubPal06,Diethelm-B10,DieFF04,JiangLiu-etal12b,JiangLiu-etal12,LiLiu14,
Luchko11,McLMus07}. Moreover, $\sigma_1$, $\ldots$, $\sigma_m$ are explicitly known,  see e.g. \cite{Diethelm-B10,DieFF04}. The WSGL formula
in \cite{TianZD14,WangVong14b} is indeed of  second-order convergence for
$U^{(2)}(t)$ when $\sigma_{m+1}\geq2+\alpha$ ($\alpha$ is the fractional order) while it converges slowly for $U^{(1)}(t)$.
This  observation motivated us to
apply the the idea of \cite{Lub86} in  (b2) to obtain
desired high accuracy schemes.
Specifically, we introduce some corrections terms into
the  WSGL formula so that
the resulting modified WSGL formula is exact or highly accurate for $U^{(1)}(t)$
while the second-order convergence for $U^{(2)}(t)$ is maintained.

According to  \cite{Lub86}, the  correction terms are found using  the so-called starting weights and values to make the resulting formula exact for low regularity terms  $t^{\sigma_r}(1\leq r \leq m)$ in $U$ (see \eqref{eq:ut}). We need to solve a linear system to obtain the starting weights, whose coefficient matrix
is an ill-conditioned exponential Vandermonde matrix.
It has been pointed out in \cite{DieFord06} that  the accuracy in solving the corresponding weights
``may have serious adverse effects for the entire scheme''  when  $m$ is large.
However, it is shown in \cite{DieFord06,Lub86} that
the accuracy of Lubich's correction approach depends on the residual of the linear system for obtaining the starting weights, which was discussed in detail in \cite{DieFord06}.

Fortunately, the number of correction terms can be small and still obtain reasonable accuracy.
In this paper, we show   that several correction terms (less than ten) significantly improve  accuracy, regardless of
the regularity of the analytical solution.
Since we are using a few corrections terms, the condition number of the exponential Vandermonde matrices
is not too large and thus the linear system to derive the starting weights can be
solved accurately with double precision.
Moreover, even if the regularity indices $\sigma_r$ (see \eqref{eq:ut}) are unknown and the ``correction terms'' do not match the singularity of the analytical solutions to considered FDEs, we can still obtain
satisfactory accuracy, see  Example \ref{eg3-2}  and numerical results in Section \ref{sec:numerical}.
In particular, we present in Lemma \ref{lem:quad-2} a detailed error estimate of
the WSGL formula with correction terms, which explains why a few correction terms may lead to satisfactory accuracy.

We organize the paper as follows.
In Section \ref{eq:time-discretization}, we obtain the asymptotic error equation of the shifted
GL formula that leads to the second-order WSGL formula under mild conditions.
We then show that the WSGL formula with correction terms can lead to better accuracy.
In Section \ref{sec:multi-term-fode}, we apply one special case of the second-order WSGL formula
with correction terms to  multi-term FODEs and present the stability and convergence theory.
We further extend the second-order WSGL formula
with correction terms in Section \ref{sec:3} to the time discretization of
the multi-term time-fractional diffusion-wave equation together
with the spectral element method for spatial discretization.
Numerical results for smooth and  non-smooth solutions are included in Section \ref{sec:numerical}.
All the proofs of our lemmas and theorems are presented in Section \ref{sec:proof} before the conclusion in the last section.
In the Appendix we include some computational details, additional proofs, and also more numerical results.


\section{Finite difference approximations for fractional derivatives}\label{eq:time-discretization}

In this section, we examine the asymptotic behavior of the error equation of the
shifted  GL formula, which leads to the error estimate of the
WSGL formula \cite{TianZD14}.
Following Lubich's approach \cite{Lub86}, we then introduce correction terms to recover the global second-order
accuracy of the WSGL formula and obtain an error bound.

We first introduce  definitions of fractional integrals and derivatives.
The $\alpha$th-order  Caputo derivative operator is defined \cite{Pod-B99}
\begin{equation}\label{eq:cpt}
\cpt{\alpha}{U(t)}=D^{-(n-\alpha)}_{0,t}\left[D^nU(t)\right]
=\frac{1}{\Gamma(n-\alpha)}\int_{0}^t(t-s)^{n-\alpha-1}\frac{\dx[]^n}{\dx[s]^n}U(s)\dx[s],
\end{equation}
where $n-1<\alpha\leq n$, $n$ is a positive integer.  The fractional integral
  $D^{-\gamma}_{0,t}$ is
given by
\begin{equation}\label{eq:fint}
D^{-\gamma}_{0,t}U(t)={}_{RL}D^{-\gamma}_{0,t}U(t)
=\frac{1}{\Gamma(\gamma)}\int_{0}^t(t-s)^{\gamma-1}{U(s)}\dx[s],{\quad}\gamma>0.
\end{equation}

Let $\tau>0$ be a time stepsize and $n_T$ be a positive integer
with $\tau=T/n_T$ and $t_n=n\tau(n=0,1,...,n_T$).
The shifted GL formula (with $q$ shifts) reads  (see e.g. \cite{MeeTad04}):
\begin{equation}\label{balf}
\mathcal{B}^{\alpha,n}_{q}U
=\frac{1}{\tau^{\alpha}}\sum_{k=0}^{n+q}\omega^{(\alpha)}_{k}U(t_{n-k+q})
=\frac{1}{\tau^{\alpha}}\sum_{k=0}^{n+q}\omega^{(\alpha)}_{k}U^{n-k+q},
\end{equation}
where $\omega^{(\alpha)}_k=(-1)^k\binom{\alpha}{k}$.  We have the following error estimate for the  formula \eqref{balf}, the proof of which  can be found in Section \ref{sec:proof}.
\begin{lemma}[Error of the shifted GL  formula
\eqref{balf}]\label{lem:quad-accuracy}
Let $U(t)=t^{\sigma}\,(\sigma\geq0)$ and $\alpha$ be  a real number. Then
\begin{equation}\label{SGL}\begin{aligned}
\left[{}_{RL}D^{\alpha}_{0,t}U(t)\right]_{t=t_n}
=\mathcal{B}^{\alpha,n}_{q}U
-\tau\left(q-\frac{\alpha}{2}\right)\frac{\Gamma(\sigma+1)}{\Gamma(\sigma-\alpha)}t_{n}^{\sigma-1-\alpha}
+\tau^{2}R^{n,\alpha,\sigma},
\end{aligned}\end{equation}
where   $\mathcal{B}^{\alpha,n}_{q}$  is defined by \eqref{balf}, $n\geq |q|$, $q$ is an integer, and $R^{n,\alpha,\sigma}$ is bounded by
\begin{equation}\label{Rn}
\abs{R^{n,\alpha,\sigma}}\leq  C t_n^{\sigma-2-\alpha}.
\end{equation}
\end{lemma}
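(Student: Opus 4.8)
The plan is to compute the generating function of the shifted Gr\"unwald--Letnikov weights applied to the monomial $U(t)=t^\sigma$ and extract its asymptotic expansion in $\tau$. Recall that $\sum_{k\ge0}\omega_k^{(\alpha)}z^k=(1-z)^\alpha$, so the operator $\mathcal{B}^{\alpha,n}_q$ applied to a function is, after a $q$-shift, a discrete convolution whose symbol is $(1-z)^\alpha$. The key observation is that for the test function $t^\sigma$ the exact Riemann--Liouville derivative is $\frac{\Gamma(\sigma+1)}{\Gamma(\sigma-\alpha)}t^{\sigma-\alpha}$, and the discrete sum $\tau^{-\alpha}\sum_{k=0}^{n+q}\omega_k^{(\alpha)}(t_{n-k+q})^\sigma$ can be written as $t_n^{\sigma-\alpha}$ times a function of $n$ alone (by homogeneity, writing $t_{n-k+q}=t_n(1-(k-q)/n)$). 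So the whole identity reduces to an asymptotic statement, as $n\to\infty$, for the quantity
\[
S_n:=\frac{1}{n^\alpha}\sum_{k=0}^{n+q}\omega_k^{(\alpha)}\Bigl(1-\frac{k-q}{n}\Bigr)^\sigma
=\frac{\Gamma(\sigma+1)}{\Gamma(\sigma-\alpha)}+\frac{c_1}{n}+\frac{c_2}{n^2}+\cdots,
\]
and one must identify $c_1=-(q-\alpha/2)\Gamma(\sigma+1)/\Gamma(\sigma-\alpha)$ and bound the remainder by $C/n^2$.

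First I would set up the problem via the Euler--Maclaurin / contour-integral route. Write $\omega_k^{(\alpha)}=\frac{(-1)^k\Gamma(\alpha+1)}{\Gamma(k+1)\Gamma(\alpha-k+1)}=\frac{\Gamma(k-\alpha)}{\Gamma(-\alpha)\Gamma(k+1)}$, which for large $k$ has the expansion $\omega_k^{(\alpha)}=\frac{k^{-\alpha-1}}{\Gamma(-\alpha)}\bigl(1+\frac{a_1}{k}+\cdots\bigr)$ by Stirling. Then the sum $\sum_k \omega_k^{(\alpha)}(1-(k-q)/n)^\sigma$ is a Riemann-type sum for $\frac{1}{\Gamma(-\alpha)}\int_0^1 x^{-\alpha-1}(1-x)^\sigma\,dx$ after substituting $x=k/n$; this Beta integral equals $\frac{\Gamma(-\alpha)\Gamma(\sigma+1)}{\Gamma(\sigma-\alpha+1)}\cdot\frac{1}{\Gamma(-\alpha)}$, wait — one must be careful near $x=0$ where $x^{-\alpha-1}$ is not integrable for $\alpha\ge0$, so the identification of the leading term is really through the algebraic identity ${}_{RL}D^\alpha_{0,t}t^\sigma=\frac{\Gamma(\sigma+1)}{\Gamma(\sigma-\alpha)}t^{\sigma-\alpha}$ combined with consistency of the GL formula, rather than through a naive integral approximation. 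A cleaner path: use the known fact that $\mathcal{B}^{\alpha,n}_0$ (no shift) has a full asymptotic expansion $\mathcal{B}^{\alpha,n}_0 t^\sigma = \sum_{j\ge0} e_j \tau^j\, t_n^{\sigma-\alpha-j}$ with $e_0=\Gamma(\sigma+1)/\Gamma(\sigma-\alpha)$ (this is essentially Lubich's classical result on fractional multistep methods for $t^\sigma$), and then account for the shift by Taylor-expanding $U(t_{n-k+q})$ around $t_{n-k}$: the shift contributes a factor whose generating-function effect is multiplication of the symbol by $(1-z)^{-q}$... more directly, $\mathcal{B}^{\alpha,n}_q U(t_n)=\mathcal{B}^{\alpha,n+q}_0 U(t_{n+q})$ shifted, and one tracks how evaluating at $t_{n+q}$ versus $t_n$ shifts the expansion.

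The cleanest self-contained approach, and the one I would actually carry out, is: (1) establish the generating-function identity $\tau^{-\alpha}\sum_{k\ge0}\omega_k^{(\alpha)}(1-z)$-type relation and reduce everything to analyzing the scalar sequence $S_n$ above; (2) apply the Euler--Maclaurin formula to $\sum_{k=0}^{n+q} f(k)$ with $f(k)=\omega_k^{(\alpha)}(1-(k-q)/n)^\sigma$, using the Stirling expansion of $\omega_k^{(\alpha)}$ to control $f$ and its derivatives; (3) collect the $n^{-1}$ term — it has two sources, the $q$-shift in the argument (contributing $q$) and the half-integer correction $\alpha/2$ coming from the $\frac12 f$ boundary term in Euler--Maclaurin together with the subleading Stirling coefficient of $\omega_k^{(\alpha)}$ — and verify they combine to $-(q-\alpha/2)$; (4) bound the $n^{-2}$-and-higher tail uniformly by $C n^{-2}$, i.e. $C t_n^{\sigma-2-\alpha}\tau^2$ after restoring dimensions. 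I expect the main obstacle to be step (3)--(4): the endpoint behavior of the Euler--Maclaurin sum near $k=0$, where $(1-(k-q)/n)^\sigma$ is smooth but $\omega_k^{(\alpha)}$ is largest, and near $k\approx n+q$, where the factor $(1-(k-q)/n)^\sigma$ vanishes like a power and its derivatives blow up for $\sigma<2$ — so one needs to split the sum (e.g. $k\le n/2$ versus $k>n/2$), treat the near-endpoint-$t_n$ part by a direct Taylor/Abel-summation argument exploiting $\sum_k\omega_k^{(\alpha)}$-type partial sums, and only apply Euler--Maclaurin on the bulk. Getting the remainder bound uniform in $n$ (and tracking that the constant $C$ may depend on $\sigma$ and $\alpha$ but not on $n$) is the delicate accounting; the leading and first-order coefficients, once the bookkeeping is organized, fall out of the first two Stirling terms.
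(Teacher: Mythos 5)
Your outline is structurally correct: the reduction by homogeneity to an asymptotic expansion of $\frac{1}{n^{\alpha}}\sum_k\omega_k^{(\alpha)}(n-k+q)^{\sigma}$ in powers of $1/n$, the identification of the first-order coefficient as the sum of a shift contribution $q$ and a quadrature contribution $-\alpha/2$, and the handling of the shift by relating $\mathcal{B}^{\alpha,n}_{q}$ to the unshifted sum evaluated at $n+q$ and expanding $(1+q/n)^{\sigma-\alpha}$ --- this last step is exactly what the paper does in \eqref{SGL-4}--\eqref{SGL-5}. Where you diverge is in how the core $q=0$ expansion \eqref{SGL-3} is obtained. The paper simply cites Lubich's Lemma~3.5 in \cite{Lub86}, whose proof runs through the generating function $(1-z)^{\alpha}\sum_{k\geq1}k^{\sigma}z^{k}$ and the singularity expansion of the polylogarithm near $z=1$, from which the two-term coefficient asymptotics with an $O(n^{\sigma-\alpha-2})$ remainder follow by transfer; the paper's ``proof'' of this step is therefore a citation, not a derivation. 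You instead propose a self-contained Euler--Maclaurin plus Stirling argument. That is a genuinely different route, and it is in principle viable, but the two obstacles you yourself flag --- the non-integrability of $x^{-\alpha-1}$ at the origin for $\alpha\geq0$ (so that the leading term cannot be read off a Beta integral and must come from cancellation in the partial sums of $\omega_k^{(\alpha)}$) and the blow-up of derivatives of the power factor at the far endpoint for non-integer $\sigma$ --- are precisely the places where the uniform $O(n^{-2})$ remainder bound has to be earned, and your proposal defers them rather than resolving them. The generating-function route buys you exactly this: the endpoint/cancellation bookkeeping is absorbed into the local analysis of $(1-z)^{\alpha}$ times the polylogarithm at $z=1$, and the full expansion with controlled remainder comes out in one stroke. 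So I would grade this as the right theorem skeleton with the same shift computation as the paper, but with the hardest analytic step (the paper's \eqref{SGL-3}) replaced by a sketch whose delicate points are correctly named yet not carried out; to complete your version you would either have to execute the split-sum Abel/Euler--Maclaurin estimates in detail or, as the paper and your own ``cleaner path'' remark suggest, fall back on Lubich's lemma.
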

Here and throughout the paper,  the constant $C>0$ is independent of $n$ and $\tau$.

From Lemma \ref{lem:quad-accuracy},    we can eliminate the
term $t_{n}^{\sigma-1-\alpha}$ in \eqref{SGL} by a linear combination of
$\mathcal{B}_q^{\alpha,n}$ and $\mathcal{B}_p^{\alpha,n}$, $p\neq q$. Let
\begin{equation}\label{s31-1-2}
\mathcal{A}_{p,q}^{\alpha,n}U
=\frac{\alpha-2q}{2(p-q)}\mathcal{B}^{\alpha,n}_{p}U
+\frac{2p-\alpha}{2(p-q)}\mathcal{B}^{\alpha,n}_{q}U.
\end{equation}
Then  we obtain   the second-order WSGL formula in \cite{TianZD14},
\begin{equation}\label{s31-1} 
\left[{}_{RL}D_{0,t}^{\alpha}U(t)\right]_{t=t_n}=\mathcal{A}_{p,q}^{\alpha,n}U
+\tau^{2}R^{n,\alpha,\sigma}, \quad |R^{n,\alpha,\sigma}|\leq C t_n^{\sigma-2-\alpha}.
\end{equation}


Eq. \eqref{s31-1-2} is   proved in \cite{TianZD14,WangVong14b} to be of second-order convergence when
$U(0)=0$, ${}_{RL}D_{0,t}^{\alpha+2}U(t)$ and its Fourier transform belongs to  $L_1(\mathbb{R})$.
However,   these conditions are too restrictive since ${}_{RL}D_{0,t}^{\alpha+2}U(t)\notin L_1(\mathbb{R})$
for  $U(t)=t^{\sigma_r}$ with $0\leq \sigma_r\leq 1+\alpha$ when  $\alpha>0$. For example, when  $U(t)=t$, the remainder term in \eqref{s31-1}
is of the order of $\tau^2t_{n}^{-1-\alpha}$, which is  not
of second-order convergence globally as the accuracy is $O(\tau^{1-\alpha})$
when $t_n=\tau$.
Consequently, when \eqref{s31-1} is applied to solve FDEs with even smooth solutions,
we cannot  expect
a global second-order convergence unless   {the solutions
have  vanishing first-order derivatives}.
In particular,  there is almost no accuracy  at $t=0$ when  $1<\alpha<2$.
In practice, this large error near $t=0$ may lead to large accumulation of discretization errors and thus much larger error at the desired final time $t_n$; see  accuracy  tests of the formula \eqref{s31-1-2}
in Fig. \ref{fig2-2} of this section and numerical experiments
in Section \ref{sec:numerical}.

To improve the accuracy  near $t=0$,  we follow  Lubich's approach \cite{Lub86} by adding correction terms to \eqref{s31-1-2} such that the resulting formula  is indeed of second-order accuracy when $U(t)$ is of the form \eqref{eq:ut}.
 %
%
Specifically, we modify  \eqref{s31-1-2} such that
\begin{equation}\label{s31-1-3}\begin{aligned}
\left[{}_{RL}D_{0,t}^{\alpha}U(t)\right]_{t=t_n}=\mathcal{{A}}_{p,q}^{\alpha,n,m}U+R^n,\,\,\, \mathcal{{A}}_{p,q}^{\alpha,n,m}U=:\mathcal{A}_{p,q}^{\alpha,n}U
+\frac{1}{\tau^{\alpha}}\sum_{k=1}^mw_{n,k}^{(\alpha)}U(t_k).
\end{aligned}\end{equation}
In \eqref{s31-1-3}, the starting weights $\{w_{n,k}^{(\alpha)}\}$ are known at each time step as they can be determined  by setting   $R^n=0$ in \eqref{s31-1-3} for
$U(t)=t^{\sigma_r}\,(1\leq r \leq m)$.
Denote $\displaystyle\mathcal{A}_{p,q}^{\alpha,n}U=\frac{1}{\tau^{\alpha}}\sum_{k=0}^ng^{(\alpha)}_{n-k}U(t_k)$. Then the starting weights  can be solved from the following linear system of equations, see e.g. \cite{DieFord06,Lub86},
\begin{equation}\label{s31-8}
\sum_{k=1}^mw_{n,k}^{(\alpha)}k^{\sigma_r}
=\frac{\Gamma(\sigma_r+1)}{\Gamma(\sigma_r+1-\alpha)}n^{\sigma_r-\alpha}
-\sum_{k=0}^{n}g^{(\alpha)}_{n-k}k^{\sigma_r},\quad 1\leq r \leq m.
\end{equation}

The linear system \eqref{s31-8} has  an exponential Vandermonde type matrix  that is ill-conditioned when $m$ is  large
\cite{DieFord06}. The  large condition number of the matrix  may lead to  big roundoff errors of $w_{n,r}^{(\alpha)}$ $(1\leq r \leq m)$  when  computation is performed with double precision. For  $\sigma_k=k\alpha$,   we present the condition number of  the system \eqref{s31-8} in   Table \ref{tb2-1}. Hereafter we  choose   $(p,q)=(0,-1)$ in \eqref{s31-1-3} \footnote{
We choose hereafter in this paper   $(p,q)=(0,-1)$ in \eqref{s31-1-3}.}
 where the quadrature weights
 $g^{(\alpha)}_{k}$'s are defined by, see \cite{Lub86,TianZD14,WangVong14b},
\begin{equation}\label{g-k}
g^{(\alpha)}_{0}=\frac{2+\alpha}{2}\omega^{(\alpha)}_{0},{\quad}
g^{(\alpha)}_{k}=\frac{2+\alpha}{2}\omega^{(\alpha)}_{k}-\frac{\alpha}{2}\omega^{(\alpha)}_{k-1},\quad \omega^{(\alpha)}_k=(-1)^k\binom{\alpha}{k},\,k\geq 1.
\end{equation}
In fact, $g^{(\alpha)}_{k}$ satisfies $(1-z)^{\alpha}\left(1+\frac{\alpha}{2}
-\frac{\alpha}{2} z\right)=\sum_{k=0}^{\infty}g^{(\alpha)}_{k}z^k$, see \cite{Lub86}.

From Table \ref{tb2-1}, we observe that the condition number increases with $m$ and decreases with $\alpha$. However, for  $m$ and $\alpha$'s presented in Table \ref{tb2-1}, we can still have some reasonable accuracy for the starting weights.
In fact, the accuracy of \eqref{s31-1-3}  is determined somehow by
the  residual of \eqref{s31-8}.
 This observation has been made
 in  \cite{DieFord06,Lub86}.   We present the residual of the system \eqref{s31-8} in Table \ref{tb2-2}, where the residual  is computed by
$\max_{1\leq r \leq m,\,1\leq n \leq 100}|\sum_{k=1}^mw_{n,k}^{(\alpha)}k^{\sigma_r}
-\frac{\Gamma(\sigma_r+1)}{\Gamma(\sigma_r+1-\alpha)}n^{\sigma_r-\alpha}
+\sum_{k=0}^{n}g^{(\alpha)}_{n-k}k^{\sigma_r}|.$  When  $\alpha=0.05$ and $m=7$, the condition number is $1.84\times 10^{12}$  and the residual is   $1.19\times 10^{-6}$, see Table \ref{tb2-2}. In this case we can still obtain relatively high accuracy, see Figure \ref{eg3-2}(a) in Example \ref{eg3-2}.

\begin{table}[!h]
\caption{Condition numbers of the linear system \eqref{s31-8},
$\sigma_k=k\alpha\,(k\geq 1)$, and $(p,q)=(0,-1)$ in \eqref{s31-1-3}.\label{tb2-1}}
 \centering \footnotesize
\begin{tabular}{|c|c|c|c |c | c|c|c|c|}\hline
$\alpha$& $m=2$   &   $m=3$   &  $m=4$  &  $m=5$  &   $m=6$   &  $m=7$  &  $m=8$ \\ \hline
$0.05$ &1.15e+02&1.28e+04&1.41e+06&1.54e+08&1.69e+10&1.84e+12&2.02e+14 \\ \hline
$0.1$  &5.80e+01&3.20e+03&1.76e+05&9.72e+06&5.41e+08&3.04e+10&1.73e+12\\ \hline
$0.3$  &2.03e+01&3.87e+02&7.86e+03&1.73e+05&4.12e+06&1.04e+08&2.81e+09\\ \hline
\end{tabular}
\end{table}

\begin{table}[!h]
\caption{The residual  of the linear system \eqref{s31-8},
$\sigma_k=k\alpha\,(k\geq 1)$.\label{tb2-2}}
 \centering\footnotesize
\begin{tabular}{|c|c|c|c |c | c|c|c|c|}\hline
$\alpha$& $m=2$   &   $m=3$   &  $m=4$  &  $m=5$  &   $m=6$   &  $m=7$  &  $m=8$ \\ \hline
$0.05$ &1.66e-15&2.27e-13&7.27e-12&4.65e-10&2.60e-08&1.19e-06&6.86e-05 \\ \hline
$0.1$  &1.77e-15&2.84e-14&4.54e-13&5.82e-11&4.65e-10&1.11e-08&8.34e-07 \\ \hline
$0.3$  &1.11e-16&7.10e-15&5.68e-14&2.27e-13&1.81e-11&4.65e-10&6.05e-09 \\ \hline
\end{tabular}
\end{table}

In our numerical simulations of this work, we choose less than eight correction terms. It  works surprisingly   well with high accuracy  even when $\alpha$ is small. For example, in Figure  \ref{eg3-2}(a), we used six correction terms for $\alpha=0.05$, the accuracy for all $t\geq 0.2$ is at the level of  $10^{-8}$, which is two orders of magnitude lower than $\tau^2=10^{-6}$.
Hence, we are motivated to consider
the asymptotic behavior of $R^n$ in \eqref{s31-1-3} when $U(t)=t^{\sigma}$, addressing the practical effect of correction terms.
\begin{lemma}\label{lem:quad-2}
Let $U(t)=t^{\sigma}\,(\sigma\geq0)$ and $\alpha$ be a real number. Let $R^n$ be defined in  $\eqref{s31-1-3}$, $S_m^{\sigma}=\prod_{k=1}^m|\sigma-\sigma_k|$, $\sigma_{\max}=\max\{\sigma,\sigma_1,...,\sigma_m\}$. Then we have
\begin{equation}\label{Rn2}
 {\abs{R^n}}
\leq  {\tau^{\sigma-\alpha}} \left[CS_m^{\sigma}
\left(n^{\sigma-\alpha-2}\log^{m}(n)+n^{\sigma_{\max}-\alpha-2}\right)
+\widetilde{C}{n}^{\sigma_{\max}-2-d-\alpha}\right],
\end{equation}
where  $C$ and $\widetilde{C}$ are positive constants bounded and independent of $n$ and $\tau$.
\end{lemma}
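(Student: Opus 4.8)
The plan is to rewrite $R^n$ as (a power of $\tau$ times) a pure interpolation-error quantity in the exponent variable $\sigma$, and then to estimate that error using a sharpened asymptotic expansion of the uncorrected quadrature error. Exploiting linearity of $U\mapsto R^n$ in \eqref{s31-1-3}, together with $\bigl[{}_{RL}D^{\alpha}_{0,t}t^{\sigma}\bigr]_{t=t_n}=\frac{\Gamma(\sigma+1)}{\Gamma(\sigma+1-\alpha)}(n\tau)^{\sigma-\alpha}$, $\mathcal{A}_{p,q}^{\alpha,n}U=\tau^{-\alpha}\sum_{k=0}^n g^{(\alpha)}_{n-k}U(t_k)$, and $U(t_k)=k^{\sigma}\tau^{\sigma}$, one obtains
\[ R^n=\tau^{\sigma-\alpha}\Bigl(\phi^n(\sigma)-\sum_{k=1}^m w^{(\alpha)}_{n,k}\,k^{\sigma}\Bigr),\qquad \phi^n(\sigma):=\frac{\Gamma(\sigma+1)}{\Gamma(\sigma+1-\alpha)}\,n^{\sigma-\alpha}-\sum_{k=0}^n g^{(\alpha)}_{n-k}\,k^{\sigma}. \]
By \eqref{s31-8} the starting weights satisfy $\sum_{k=1}^m w^{(\alpha)}_{n,k}k^{\sigma_r}=\phi^n(\sigma_r)$ for $r=1,\dots,m$, and since $\{k^{\sigma}\}_{k=1}^m=\{e^{\sigma\ln k}\}_{k=1}^m$ is a Chebyshev (Haar) system on $[0,\infty)$ (the numbers $\ln k$ being distinct), the function $\sigma\mapsto\sum_{k=1}^m w^{(\alpha)}_{n,k}k^{\sigma}$ is exactly the interpolant $\Pi_m\phi^n$ of $\phi^n$ at $\sigma_1,\dots,\sigma_m$ in $\mathrm{span}\{k^{\sigma}\}_{k=1}^m$; hence $R^n=\tau^{\sigma-\alpha}\bigl(\phi^n(\sigma)-\Pi_m\phi^n(\sigma)\bigr)$.

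I would then invoke the error representation for interpolation by a Chebyshev system. With the disconjugate operator $L:=\prod_{k=1}^m(\partial_\sigma-\ln k)$, which annihilates $\mathrm{span}\{k^{\sigma}\}$, there is a Green's-function (Hermite--Genocchi) identity $\phi^n(\sigma)-\Pi_m\phi^n(\sigma)=\int G(\sigma,\xi)\,(L\phi^n)(\xi)\,\mathrm{d}\xi$, where $G(\sigma,\cdot)$ is supported on the convex hull of $\{\sigma,\sigma_1,\dots,\sigma_m\}$ and $\int|G(\sigma,\cdot)|\le C\prod_{k=1}^m|\sigma-\sigma_k|=C\,S_m^{\sigma}$, with $C$ independent of $n$ and $\tau$. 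Since $L\phi^n=\partial_\sigma^m\phi^n+(\text{lower-order }\sigma\text{-derivatives of }\phi^n)$, it remains to control $\partial_\sigma^j\phi^n$, $0\le j\le m$, on $[\sigma_{\min},\sigma_{\max}]$; these $\sigma$-derivatives are where the logarithms arise, since $n^{\sigma}=e^{\sigma\ln n}$ picks up a factor $\ln n$ under each $\partial_\sigma$.

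For the derivatives I would sharpen Lemma~\ref{lem:quad-accuracy} into a genuine expansion: from the generating relation $(1-z)^{\alpha}\bigl(1+\frac{\alpha}{2}-\frac{\alpha}{2}z\bigr)=\sum_k g^{(\alpha)}_k z^k$ and a fractional Euler--Maclaurin analysis of $\sum_{k=0}^n g^{(\alpha)}_{n-k}k^{\sigma}$ one obtains, for the fixed truncation order $d$ in the statement,
\[ \phi^n(\sigma)=\sum_{j=2}^{d+1}a_j(\sigma)\,n^{\sigma-\alpha-j}+r^n_d(\sigma),\qquad \bigl|\partial_\sigma^\ell r^n_d(\sigma)\bigr|\le\widetilde C\,n^{\sigma-2-d-\alpha}\quad(0\le\ell\le m), \]
with coefficients $a_j$ analytic in $\sigma$ on bounded sets (ratios of Gamma factors) whose $\sigma$-derivatives need to be controlled. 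Differentiating the $j$-th term $m$ times spawns a factor $\log^m n$, so a careful analysis of the divided differences of $\phi^n$ shows the smooth part of $\phi^n(\sigma)-\Pi_m\phi^n(\sigma)$ is controlled by $C S_m^{\sigma}\bigl(n^{\sigma-\alpha-2}\log^m n+n^{\sigma_{\max}-\alpha-2}\bigr)$ --- the first term coming from the confluent/extrapolation regime (where $m$-fold $\sigma$-differentiation of $e^{\sigma\ln n}$ genuinely produces $\log^m n$) and the second from the generic interpolatory regime (where the exponential divided difference loses no logarithm and only the largest exponent $\sigma_{\max}$ survives); the contribution of the remainder $r^n_d$ is estimated crudely, $\bigl|r^n_d(\sigma)-\Pi_m r^n_d(\sigma)\bigr|\le C\sup_{\xi\le\sigma_{\max}}|r^n_d(\xi)|\le\widetilde C\,n^{\sigma_{\max}-2-d-\alpha}$, carrying no $S_m^{\sigma}$ factor. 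Multiplying by $\tau^{\sigma-\alpha}$ yields \eqref{Rn2}.

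The step I expect to be the main obstacle is exactly this sharpened expansion of $\phi^n$ with \emph{uniform-in-$\sigma$} control of derivatives up to order $m$: one must upgrade the single-term bound of Lemma~\ref{lem:quad-accuracy} to a full asymptotic series, handle the analytic coefficients $a_j(\sigma)$ and their $\sigma$-derivatives (in particular near the isolated $\sigma$ where they vanish), and carry the Euler--Maclaurin remainder --- including the behaviour of the convolution sum near its endpoints $k=0$ and $k=n$, which is what pins down the exponent $\sigma_{\max}-2-d-\alpha$ --- all with constants independent of $n$ and $\tau$. The remaining delicacy is the bookkeeping in the interpolation-error estimate that separates the two $\sigma$-regimes and thereby keeps the logarithmic factor attached only to the $n^{\sigma-\alpha-2}$ term and not to the larger $n^{\sigma_{\max}-\alpha-2}$ term.
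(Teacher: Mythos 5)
Your plan is correct and follows the same overall strategy as the paper's proof: both reduce $R^n$ to $\tau^{\sigma-\alpha}$ times a function of the exponent variable $\sigma$ that vanishes at the nodes $\sigma_1,\dots,\sigma_m$, extract the factor $S_m^{\sigma}=\prod_{k=1}^m|\sigma-\sigma_k|$ by a divided-difference/mean-value argument in $\sigma$, obtain the $\log^m n$ factor from $m$-fold $\sigma$-differentiation of $n^{\sigma}=e^{\sigma\ln n}$ via Leibniz, and rest on Lubich's asymptotic expansion of the uncorrected quadrature error with coefficients analytic in $\sigma$. Where you genuinely differ is the interpolation-error mechanism. The paper splits $H^{n,m,\alpha}(\sigma)=H_1+H_2$ with $H_1(\sigma)=Gn^{\sigma-\alpha-2}-\sum_{k}G_kn^{\sigma_k-\alpha-2}$, asserts $H_1(\sigma_k)=0$, and applies the ordinary polynomial-interpolation (Rolle-type) formula $H_1(\sigma)=\frac{\partial_\sigma^{m}H_1(\xi)}{m!}\prod_{k}(\sigma-\sigma_k)$ to $H_1$ alone; you instead observe that $\sum_{k}w_{n,k}^{(\alpha)}k^{\sigma}$ is the \emph{exact} interpolant of $\phi^n$ in the Chebyshev system $\{k^{\sigma}\}_{k=1}^m$ and invoke the Green's-function error representation for the disconjugate operator $L=\prod_{k}(\partial_\sigma-\ln k)$. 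Your framing is conceptually cleaner: it treats the correction sum exactly and sidesteps the paper's loose inference that $H(\sigma_k)=0$ forces $H_1(\sigma_k)=0$ (in fact $H_1(\sigma_k)=-H_2(\sigma_k)$, nonzero but of higher order), but it buys this at the cost of heavier machinery whose kernel bound $\int|G(\sigma,\cdot)|\le CS_m^{\sigma}$ itself requires a short justification (Peano-kernel identity applied to the generalized node function, with $C$ controlled since $m$ and the exponents $\ln k$ are fixed and $\sigma$ ranges over a bounded set). Both arguments leave the same hard core only sketched, namely the uniform-in-$\sigma$ control of the expansion coefficients and of the $\sigma$-derivatives of the remainder up to order $m$; the paper disposes of this by citing Eqs. (3.1) and (3.13) of Lubich, and you correctly single it out as the main obstacle, so your proposal is at essentially the same level of rigor as the published proof.
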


\begin{remark}
Numerical results indicate that  {$R^n$} in \eqref{Rn2} can be bounded by
$  {|R^n|}\leq CS_m^{\sigma} {\tau^{\sigma-\alpha}}n^{\sigma_{\max}-\alpha-2}.$
See the supplementary material.
\end{remark}

From Lemmas \ref{lem:quad-2} and \ref{lm4.2},  we have a uniformly second-order approximation
$\mathcal{{A}}_{0,-1}^{\alpha,n,m}U$ of $\left[{}_{RL}D_{0,t}^{\alpha}U(t)\right]_{t=t_n}$
when  $U(t)$ satisfies \eqref{eq:ut} and $\sigma_{m+1}\geq 2+\alpha$, i.e.,
$|R^n|\leq C\tau^2t_n^{\sigma_{m+1}-2-\alpha}$.
In practice, especially with double precision computation, we take only small $m$ and thus   $\sigma_{m+1}\geq 2+\alpha$ may not hold.  In this case, we may not have  the global  second-order accuracy, but we still observe accuracy improvement at $t=0$ and  small errors far from $t=0$ due to the small coefficient $S_m^{\sigma}$ in \eqref{Rn2}.

Next, we check the accuracy of the discrete operator $\mathcal{{A}}_{0,-1}^{\alpha,n,m}$ in \eqref{s31-1-3}.
\begin{example}\label{eg3-2}
Use the formula \eqref{s31-1-3}  with $(p,q)=(0,-1)$
to numerically approximate ${}_{RL}D_{0,t}^{\alpha}U(t)$.  We consider two cases: \textbf{Case I:} $U(t)=t^{8\alpha}$, where we take $\sigma_k=k\alpha$, $k\le 8$. \textbf{Case II:} $U(t)=t^{8\alpha}+t^{9\alpha}+t^{10\alpha}+t^{11\alpha}$, where we take $\sigma_k=(k+7)\alpha$.
\end{example}


\begin{figure}[!h]
\begin{center}
\begin{minipage}{0.49\textwidth}\centering
\epsfig{figure=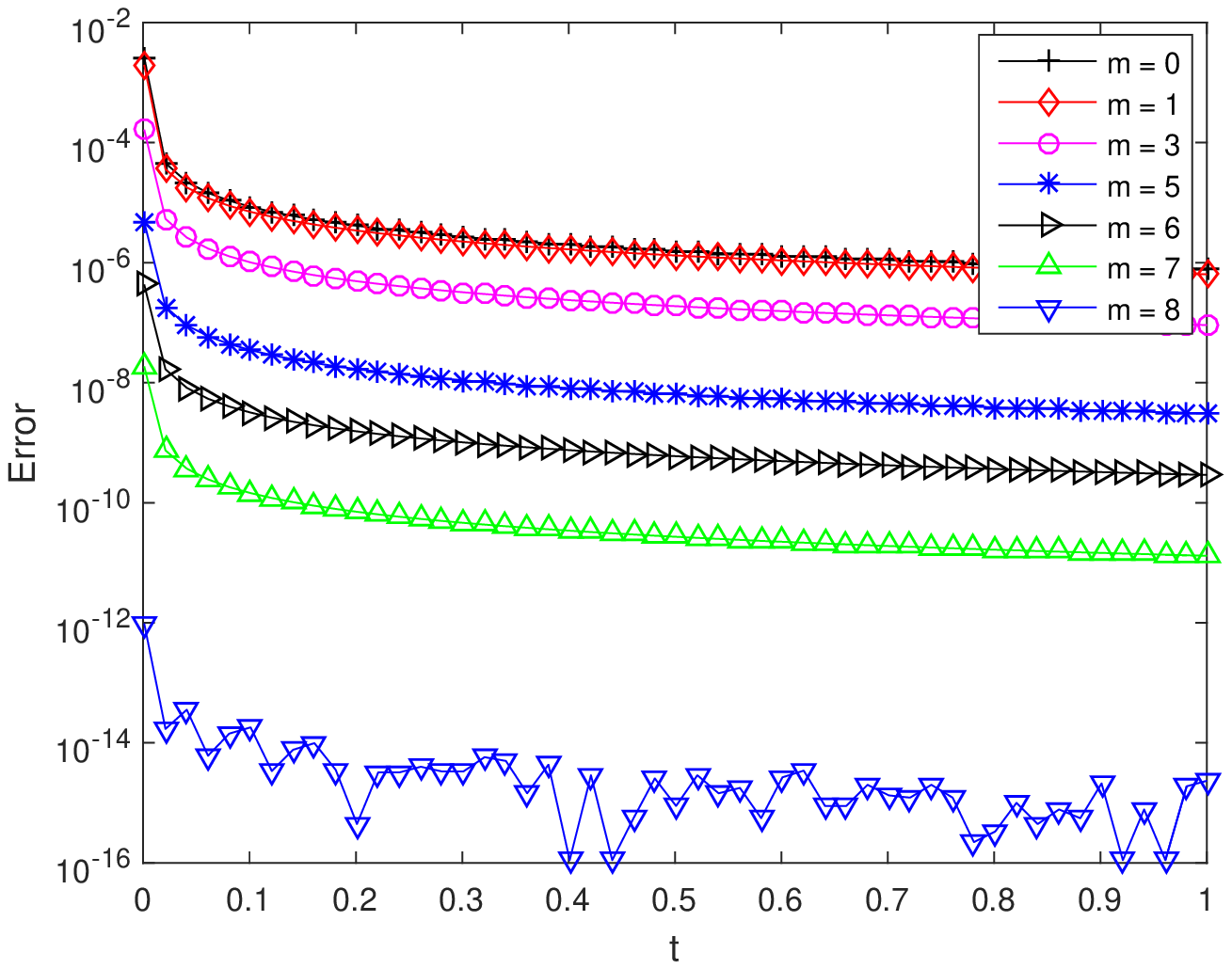,width=5.5cm}  \par{(a)   $\alpha = 0.05$.}
\end{minipage}
\begin{minipage}{0.49\textwidth}\centering
\epsfig{figure=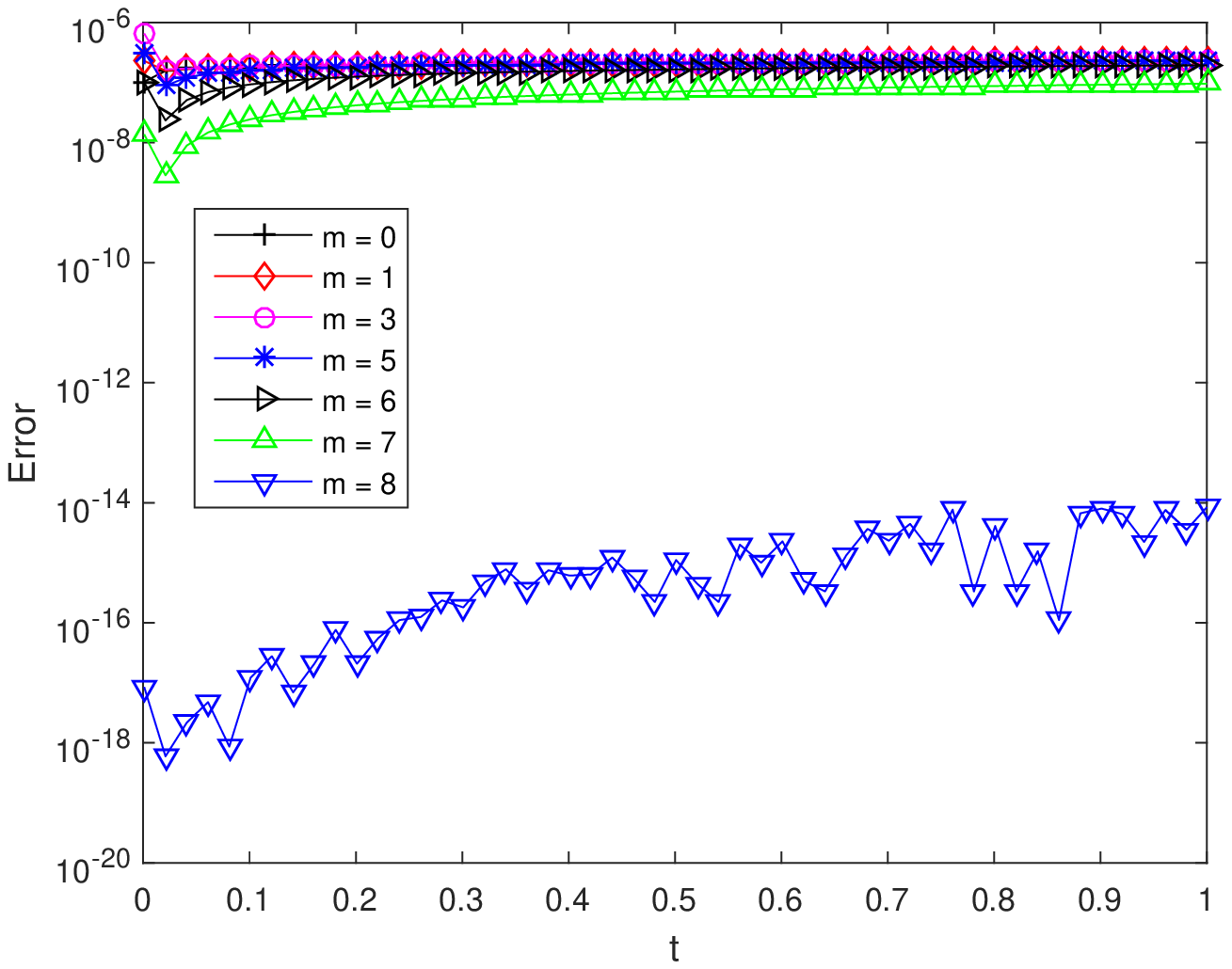,width=5.5cm}   \par{ {(b)} $\alpha = 0.3$.}
\end{minipage}
\end{center}
\caption{Pointwise errors of $\mathcal{A}_{0,-1}^{\alpha,n,m}$  for Example \ref{eg3-2},  Case I, $\tau=10^{-3}$;
small value of fractional order (left) and larger value (right).\label{fig2-2}}
\end{figure}

\begin{table}[!h]
\caption{Values of $S^{\sigma}_{m}$ for Case I, $\sigma=8\alpha$, and $\sigma_k=k\alpha\,(k\geq 1)$.\label{tb2-3}}
 \centering\footnotesize
\begin{tabular}{|c|c|c|c |c | c|c|c|c|}\hline
$\alpha$&   $m=1$   &  $m=3$  &  $m=5$  &   $m=6$   &  $m=7$  &  $m=8$ \\ \hline
$0.05$&3.50e-1&2.62e-2&7.88e-4&7.88e-5&3.94e-6&0 \\ \hline
$0.1$ &7.00e-1&2.10e-1&2.52e-2&5.04e-3&5.04e-4&0 \\ \hline
$0.3$ &2.10e-0&5.67e-0&6.12e-0&3.67e-0&1.10e-0&0 \\ \hline
\end{tabular}
\end{table}


\begin{figure}[!h]
\begin{center}
\begin{minipage}{0.49\textwidth}\centering
\epsfig{figure=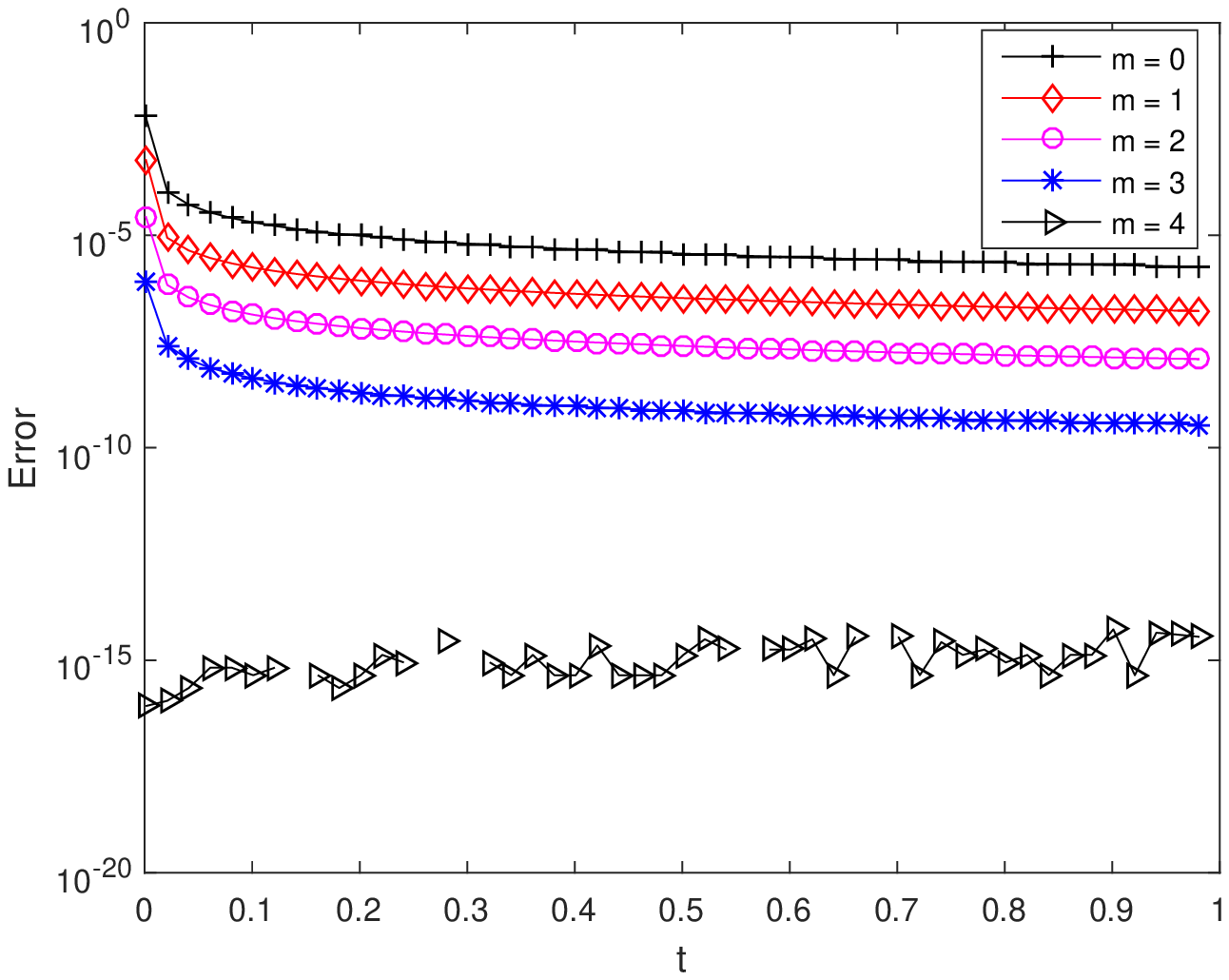,width=5.5cm}  \par{(a)   $\alpha = 0.05$.}
\end{minipage}
\begin{minipage}{0.49\textwidth}\centering
\epsfig{figure=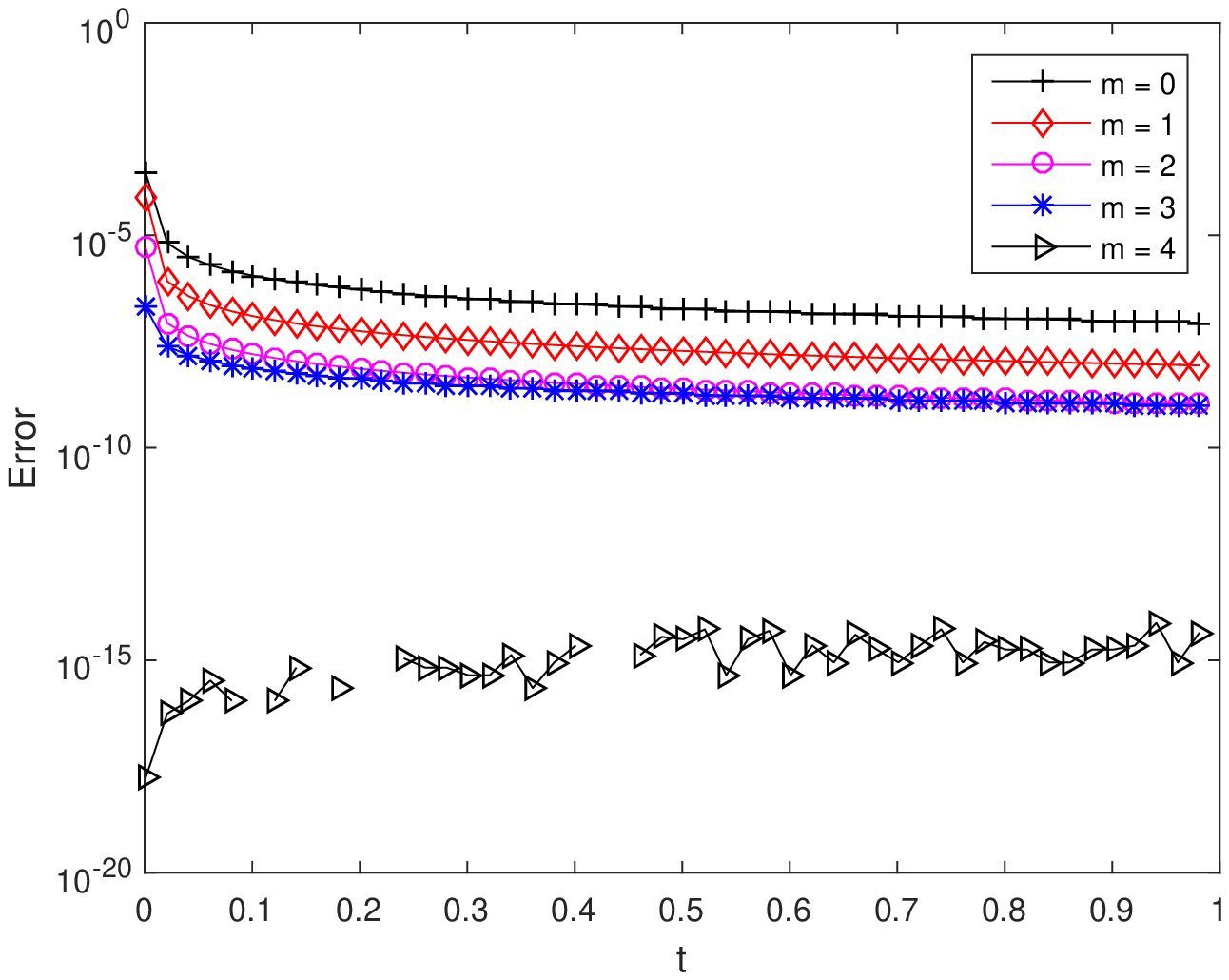,width=5.5cm}   \par{(b) $\alpha = 0.1$.}
\end{minipage}
\end{center}
\caption{Pointwise errors of $\mathcal{A}_{0,-1}^{\alpha,n,m}$  for Example \ref{eg3-2}, Case II, $\tau=10^{-3}$.\label{fig2-2-2}}
\end{figure}

The purpose of this example is to show that a small number of correction terms is sufficient to  yield relatively high accuracy whether $U(t)$ has high regularity or not.

We first consider Case I.
When {\em $\alpha$ is small}, the regularity of $U(t)$ is low. In such a case, we add only several correction terms but obtain satisfactory accuracy, see Fig. \ref{fig2-2} (a). The accuracy can be explained by
the estimate $S^{\sigma}_{m}\tau^2t_{n}^{\sigma-\alpha-2}$ in Lemma \ref{lem:quad-2}, especially
the factor $S^{\sigma}_{m}$ in the error estimate. Despite the accuracy from $\tau^2t_{n}^{\sigma-\alpha-2}$, the factor  $S_m^{\sigma}=7\times6\times \cdots\times(8-m) \alpha^m$  is very small when $\alpha$ is small and $m$ is large, see Table \ref{tb2-3}.
The small  factor $S^{\sigma}_{m}$  explains the high accuracy in Figs. \ref{fig2-2} (a).
%
When {\em $\alpha$ is relatively large},  we need only several terms  to achieve second-order   accuracy according to \eqref{s31-1-3} and Lemma \ref{lem:quad-2}.
In this case, the term $\tau^2 t_n^{\sigma-\alpha-2}$ is more pronounced than $S_m^\sigma$,  see Table \ref{tb2-3}. This effect is shown in Fig.  \ref{fig2-2} (b), where we observe  that increasing the number of correction terms does  not increase accuracy significantly  except for $m=8$, due to high regularity of $U(t)$.

For Case II, we choose $\sigma_k$ so that we match more terms of the singularity in $U(t)$; the pointwise errors are shown in Fig. \ref{fig2-2-2}. We obtain better accuracy   as the number of correction terms $m$ increases up to $4$ when we capture all the singularity of $U(t)$ that leads to
accuracy at the machine precision level.

In conclusion, we find that only a few number of corrections are needed to obtain high accuracy even when  $U(t)$ has low regularity at $t=0$. 
We also find that we do not have to match the singular terms in $U(t)$ when choosing correction terms.  In Section
\ref{sec:numerical}, we will present  numerical examples with some empirical guidelines to introduce correction terms where we do not know explicitly the singular terms in $U(t)$.

\section{Application to multi-term FODEs}\label{sec:multi-term-fode}
In this section, we apply the formula \eqref{s31-1-3} to the
discretization of multi-term  FODEs  of the form
\begin{eqnarray}
\sum_{j=1}^Q\nu_j {}_{C}D_{0,t}^{\alpha_j}Y(t)
 = f(t,Y),{\quad}t\in(0,T],T>0,{\quad}Y(0)=Y_0,\label{fode1}
\end{eqnarray}
where $\nu_1>0,\nu_j\geq0\,(2\leq j\leq Q)$, and $0<\alpha_{j+1}\leq\alpha_{j}\leq 1\,(1\leq j<Q)$.
The existence, uniqueness, and regularity of solutions to \eqref{fode1}
are investigated  in  \cite{Diethelm-B10,LiLiu14,Luchko11}.
If $f(t,Y(t))$ is smooth for $t\in(0,T]$ or $f(t,Y(t))=-Y(t)$, $\{\alpha_j\}$ are rational numbers
(see e.g. \cite{Diethelm-B10,JiangLiu-etal12b,JiangLiu-etal12,Luchko11}),
then the solution $Y(t)$ to \eqref{fode1} has the form
\begin{equation}\label{yt}
Y(t)-Y(0)=c_1t^{\sigma_1}+ c_{2}t^{\sigma_{2}}+ c_{3}t^{\sigma_{3}}+\cdots,{\quad}
\sigma_k<\sigma_{k+1},k>0.
\end{equation}

Using   ${}_{C}D_{0,t}^{\alpha_j}Y(t)= {{}_{RL}D_{0,t}^{\alpha_j}}(Y(t)-Y(0))$,
we   apply \eqref{s31-1-3} to \eqref{fode1}  that leads to
\begin{equation}\label{fode2}
\sum_{j=1}^Q {\nu_j}\mathcal{{A}}_{0,-1}^{\alpha_j,n,m_j}\widehat{Y}
= f(t_n,Y(t_n))+R^n,
\end{equation}
where $\widehat{Y}(t)=Y(t)-Y(0)$, $\mathcal{{A}}_{0,-1}^{\alpha_j,n,m_j}$
is defined by \eqref{s31-1-3},
and $m_j\,(1\leq j\leq Q)$ are suitable positive integers.
%
By \eqref{s31-1-3}, the truncation error $R^n$  in \eqref{fode2}
satisfies $R^n=\sum_{j=1}^QO(\tau^2t_{n}^{\sigma_{m_j+1}-2-\alpha_j})$.
 Let $y^n$ be the approximate solution
of $Y(t_n)$. Then we derive the  following fully discrete scheme for \eqref{fode1}
\begin{equation}\label{fode3}
\sum_{j=1}^Q   {\nu_j}\mathcal{{A}}_{0,-1}^{\alpha_j,n,m_j}\hat{y}
= f(t_n,y^n),{\quad}y^0=Y_0,
\end{equation}
where $\hat{y}=y-y^0$ and $\mathcal{{A}}_{0,-1}^{\alpha_j,n,m_j}$ is from \eqref{s31-1-3}.  Denote  $m=\max\{m_1,m_2,...,m_Q\}$.

\begin{remark}
We need  numerical values $y^k(k=1,2,...,m)$ to
proceed with the scheme.   Here we  solve  the  nonlinear system of $y^k(k=1,2,...,m)$ using \eqref{fode3} with $n=1,2,...,m$,
and we apply the Picard fixed-point iteration method.
Other high-order methods for  $y^k(k=1,2,...,m)$ can be applied here too.
\end{remark}

Next, we present the stability and convergence for \eqref{fode3}, the proofs of which are given
in Section \ref{sec:proof}.
\begin{theorem}[Linear stability]\label{thm3-2}
If $f(t,Y)=-\lambda Y(t),Re(\lambda)>0$, then the method \eqref{fode3} is unconditionally stable.
\end{theorem}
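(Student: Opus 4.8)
\emph{Proof plan.}\ With $f(t,Y)=-\lambda Y$ I would set $\hat y^{n}=y^{n}-y^{0}$ (so $\hat y^{0}=0$) and, recalling $\mathcal{A}^{\alpha_j,n}_{0,-1}\hat y=\tau^{-\alpha_j}\sum_{k=0}^{n}g^{(\alpha_j)}_{n-k}\hat y^{k}$, rewrite \eqref{fode3} as
\[
\sum_{j=1}^{Q}\frac{\nu_j}{\tau^{\alpha_j}}\Big(\sum_{k=1}^{n}g^{(\alpha_j)}_{n-k}\hat y^{k}+\sum_{k=1}^{m_j}w^{(\alpha_j)}_{n,k}\hat y^{k}\Big)+\lambda\,\hat y^{n}=-\lambda\,y^{0},\qquad n\ge1 .
\]
Unconditional stability means $\abs{y^{n}}\le C\abs{y^{0}}$ for all $n$ with $C$ independent of $\tau$ and $n$, and since $y^{n}=y^{0}+\hat y^{n}$ it suffices to bound $\abs{\hat y^{n}}$. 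The plan has three ingredients: (i) a discrete positivity property of the convolution parts $\{g^{(\alpha_j)}_{k}\}$; (ii) $\tau$-uniform bounds on the $m:=\max_j m_j$ starting values $\hat y^{1},\dots,\hat y^{m}$ and on the starting weights $w^{(\alpha_j)}_{n,k}$; (iii) a discrete energy estimate closed by a discrete fractional Gr\"onwall inequality.

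\emph{Step 1 (positivity of the kernel).} I would first show that for $0<\alpha\le1$ the kernel $\{g^{(\alpha)}_{k}\}$ is positive semidefinite in the causal convolution sense, i.e.
\[
\mathrm{Re}\sum_{n=1}^{N}\Big(\sum_{k=1}^{n}g^{(\alpha)}_{n-k}v^{k}\Big)\overline{v^{n}}\ \ge\ 0
\]
for every complex sequence $\{v^{k}\}$ and every $N$. By the standard Toeplitz-form argument this is equivalent to $\mathrm{Re}\,\omega_\alpha(z)\ge0$ on $\abs{z}=1$, where $\omega_\alpha(z)=\sum_{k\ge0}g^{(\alpha)}_{k}z^{k}=(1-z)^{\alpha}\big(1+\tfrac{\alpha}{2}-\tfrac{\alpha}{2}z\big)$ as recalled after \eqref{g-k}. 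Writing $z=e^{i\theta}$, for $\theta\in(0,\pi)$ one has $\arg(1-e^{i\theta})=\tfrac12(\theta-\pi)$ and $\mathrm{Re}\big(1+\tfrac{\alpha}{2}-\tfrac{\alpha}{2}e^{i\theta}\big)\ge1$, so $\arg\omega_\alpha(e^{i\theta})=\tfrac{\alpha}{2}(\theta-\pi)+\psi(\theta)$ with $\psi(\theta)=\arg\big(1+\tfrac{\alpha}{2}-\tfrac{\alpha}{2}e^{i\theta}\big)$ satisfying $\psi(0)=\psi(\pi)=0$, $\psi\le0$ on $(0,\pi)$, and $\psi(\theta)=-\tfrac{\alpha\theta}{2}+O(\theta^{3})$ near $\theta=0$; hence $\arg\omega_\alpha(e^{i\theta})\ge-\tfrac{\alpha\pi}{2}\ge-\tfrac{\pi}{2}$ and $\mathrm{Re}\,\omega_\alpha(e^{i\theta})\ge0$ (the arc $\theta\in(-\pi,0)$ follows by conjugation). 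This is the property already underlying the second-order WSGL analysis in \cite{TianZD14,WangVong14b}; since $\nu_j\ge0$, $\nu_1>0$, and $\tau^{-\alpha_j}>0$, it carries over to $\sum_j\tau^{-\alpha_j}\nu_j\omega_{\alpha_j}$.

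\emph{Steps 2--3 (starting data, energy estimate, Gr\"onwall).} The values $\hat y^{1},\dots,\hat y^{m}$ solve \eqref{fode3} for $n=1,\dots,m$; for this test problem that is a linear $m\times m$ system whose matrix is, for small $\tau$, dominated by the positive diagonal $\sum_j\nu_j g^{(\alpha_j)}_{0}\tau^{-\alpha_j}=\sum_j\nu_j\tfrac{2+\alpha_j}{2}\tau^{-\alpha_j}$, which yields $\abs{\hat y^{k}}\le C\abs{y^{0}}$ for $1\le k\le m$. For the starting weights, combining \eqref{s31-8} with the truncation estimate \eqref{s31-1} for $U(t)=t^{\sigma_r}$ gives $\sum_{k=1}^{m}w^{(\alpha)}_{n,k}k^{\sigma_r}=O(n^{\sigma_r-2-\alpha})$, and since the matrix $[k^{\sigma_r}]$ is invertible and $n$-independent, $\abs{w^{(\alpha)}_{n,k}}\le C\,n^{\sigma_{\max}-2-\alpha}$ uniformly in $n$. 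One can then view the scheme for $n>m$ as a pure convolution quadrature $\sum_j\tau^{-\alpha_j}\nu_j\sum_{k=0}^{n}g^{(\alpha_j)}_{n-k}\hat y^{k}+\lambda\hat y^{n}=F^{n}$ with a \emph{known} forcing $F^{n}=-\lambda y^{0}-\sum_j\tau^{-\alpha_j}\nu_j\sum_{k=1}^{m_j}w^{(\alpha_j)}_{n,k}\hat y^{k}$ bounded, by Step~2, in terms of $\abs{y^{0}}$ and the weight estimate. Pairing with $\overline{\hat y^{n}}$, summing over $n$, and taking real parts, Step~1 makes the convolution terms nonnegative and $\mathrm{Re}\,\lambda\sum_{n}\abs{\hat y^{n}}^{2}\ge0$; moving $F^{n}$ to the right, applying Cauchy--Schwarz, discarding the already-controlled first $m$ terms and invoking a discrete fractional Gr\"onwall inequality of the type used for convolution-quadrature schemes, one obtains $\abs{\hat y^{n}}\le C\abs{y^{0}}$ uniformly in $\tau,n$, hence $\abs{y^{n}}\le C\abs{y^{0}}$.

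\emph{Main obstacle.} The delicate point is making the last step $\tau$-uniform: the correction terms break the Toeplitz structure used in Step~1, and the forcing $-\lambda y^{0}$ does not decay, so a crude energy bound would lose a factor $\sqrt{N}\sim\sqrt{T/\tau}$. Controlling the correction contribution and the constant forcing together therefore relies on the decay of $w^{(\alpha)}_{n,k}$ (so that this piece behaves, in the regimes of interest, as a summable perturbation) and on a genuinely fractional discrete Gr\"onwall inequality rather than the classical one; verifying the hypotheses of that inequality for the weighted multi-term sum over $j$, and keeping every constant independent of $\tau$ throughout, is where the real work lies.
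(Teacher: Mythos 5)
Your Step 1 isolates exactly the fact the paper's proof turns on: $\mathrm{Re}\,g^{(\alpha)}(z)\geq 0$ for $|z|\leq 1$ and $0<\alpha\leq 1$, where $g^{(\alpha)}(z)=(1-z)^{\alpha}\left(1+\frac{\alpha}{2}-\frac{\alpha}{2}z\right)$ (this is the content of Lemma~\ref{lm4.1}, quoted from \cite{WangVong14b}, and your argument-counting computation is the standard way to see it). But from there the two proofs diverge. The paper does not run an energy estimate at all: following \cite{Lub86b}, it invokes the stability-region characterization for convolution quadratures, namely that \eqref{fode3} is stable precisely when $-\lambda$ lies in $\mathbb{S}=\mathbb{C}\setminus\big\{\sum_{j=1}^{Q}\nu_j\tau^{-\alpha_j}g^{(\alpha_j)}(z):|z|\leq 1,\ \tau>0\big\}$. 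Since every value in the excluded set has nonnegative real part, $\mathbb{S}$ contains the open left half-plane, so $\mathrm{Re}(\lambda)>0$ implies $-\lambda\in\mathbb{S}$ and the proof is finished in two lines. All of the difficulties you list under ``Main obstacle'' --- the broken Toeplitz structure from the correction terms, the non-decaying forcing, the $\tau$-uniformity of the constants --- are absorbed into the cited framework of \cite{Lub86b} rather than fought by hand.

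As written, your Steps 2--3 contain a genuine gap that you yourself flag but do not close. Pairing the scheme with $\overline{\hat y^{n}}$, summing, and using positivity plus Cauchy--Schwarz against the constant forcing $F^{n}\approx-\lambda y^{0}$ yields only $\sum_{n\leq N}|\hat y^{n}|^{2}\leq C\,N\,|y^{0}|^{2}$, i.e.\ an averaged bound that degrades like $\sqrt{N}\sim\sqrt{T/\tau}$ pointwise; this does not give $|\hat y^{n}|\leq C|y^{0}|$ uniformly in $\tau$, which is what unconditional stability requires. The ``discrete fractional Gr\"onwall inequality'' you appeal to at that point is exactly the missing ingredient, and it is not verified for the multi-term weighted sum with correction terms. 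To complete the proof along the paper's lines you would instead pass to the generating-function (root-condition) formulation: show that $\sum_{j}\nu_j\tau^{-\alpha_j}g^{(\alpha_j)}(z)+\lambda$ has no zeros in $|z|\leq 1$ when $\mathrm{Re}(\lambda)>0$ --- immediate from your Step 1 --- and then cite or reproduce the transference result of \cite{Lub86b} that this implies boundedness of the discrete solution, with the starting-weight terms handled there as a finite-rank perturbation.
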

\begin{theorem}[Convergence]\label{thm3-1}
Let $y^n$ be the solution to \eqref{fode3} and $Y(t)$ be the solution to \eqref{fode1} satisfying
$Y(t)-Y(0)=c_1t^{\sigma_1}+c_2t^{\sigma_2}+...$,
where $\{c_k\}$ are constants and $0<\sigma_k<\sigma_{k+1},k>0$. Suppose that $f(t,Y)$ satisfies
the Lipschitz condition in its second  argument,
$m_j\,(j=1,2,...,Q)$ are suitable positive constants satisfying $\sigma_{m_j}\leq 2$.
Then there exists a positive constant $C$ independent of $n$ and $\tau$ such that
\begin{equation}\label{fode6}
|Y(t_n)-y^n|\leq C \bigg(\sum_{k=0}^{\max\limits_{1\leq j \leq Q}\{m_{j}\}}|Y(t_k)-y^k|
+\tau^{\min\big\{2,\min\limits_{1\leq j \leq Q}\{\sigma_{m_j+1}+\alpha_1-\alpha_j\}\big\}}\bigg).
\end{equation}
\end{theorem}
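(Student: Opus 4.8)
The plan is to derive an error recursion by subtracting the scheme \eqref{fode3} from the consistency relation \eqref{fode2}, to isolate the self-coupling of the current error, and to close the recursion with a discrete fractional Gr\"{u}nwald-type Gronwall inequality. Put $e^n = Y(t_n) - y^n$, so that $e^0 = 0$ (since $y^0 = Y_0$) while $e^1,\dots,e^{m}$, $m=\max_j m_j$, are left free; these account for the initialization errors in \eqref{fode6}. Writing $\mathcal{A}_{0,-1}^{\alpha_j,n}\widehat{v} = \tau^{-\alpha_j}\sum_{k=0}^n g^{(\alpha_j)}_{n-k}v^k$, retaining the correction part $\tau^{-\alpha_j}\sum_{k=1}^{m_j}w^{(\alpha_j)}_{n,k}v^k$ from \eqref{s31-1-3}, and using that $f$ is Lipschitz in its second argument so that $\abs{f(t_n,Y(t_n)) - f(t_n,y^n)}\le L\abs{e^n}$, the error satisfies
\[
\Lambda_\tau e^n = -\sum_{j=1}^Q\nu_j\tau^{-\alpha_j}\sum_{k=1}^{n-1}g^{(\alpha_j)}_{n-k}e^k - \sum_{j=1}^Q\nu_j\tau^{-\alpha_j}\sum_{k=1}^{m_j}w^{(\alpha_j)}_{n,k}e^k + \theta_n + R^n, \quad \abs{\theta_n}\le L\abs{e^n},
\]
where $\Lambda_\tau = \sum_{j=1}^Q\nu_j g^{(\alpha_j)}_0\tau^{-\alpha_j} = \sum_{j=1}^Q\nu_j\frac{2+\alpha_j}{2}\tau^{-\alpha_j}\ge \nu_1\frac{2+\alpha_1}{2}\tau^{-\alpha_1}>0$ by \eqref{g-k} (with $\omega^{(\alpha)}_0=1$), $\alpha_1=\max_j\alpha_j$ and $\nu_1>0$. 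For $\tau$ small enough that $L\Lambda_\tau^{-1}\le\tfrac12$, the term $\theta_n$ can be moved to the left side and absorbed.

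Next I would invoke the structural properties of the weights $g^{(\alpha)}_k$ that already drive the stability proof of Theorem \ref{thm3-2} (cf. \cite{TianZD14}): for $0<\alpha\le1$, $g^{(\alpha)}_0>0$, $g^{(\alpha)}_k\le0$ for $k\ge1$, and $\sum_{k\ge0}g^{(\alpha)}_k=0$ (the generating-function identity following \eqref{g-k} at $z=1$), whence $\sum_{\ell\ge1}(-g^{(\alpha)}_\ell)=g^{(\alpha)}_0$. Dividing the recursion by $\Lambda_\tau$ then yields
\[
\abs{e^n}\le\sum_{k=1}^{n-1}\lambda^{(n)}_{n-k}\abs{e^k} + C\sum_{j=1}^Q\tau^{\alpha_1-\alpha_j}\sum_{k=1}^{m_j}\abs{w^{(\alpha_j)}_{n,k}}\,\abs{e^k} + C\tau^{\alpha_1}\abs{R^n},
\]
with $\lambda^{(n)}_\ell=\Lambda_\tau^{-1}\sum_j\nu_j\tau^{-\alpha_j}(-g^{(\alpha_j)}_\ell)\ge0$, $\sum_{\ell=1}^{n-1}\lambda^{(n)}_\ell\le1$, and the tail of $\{\lambda^{(n)}_\ell\}$ inheriting the $O(\ell^{-1-\alpha_1})$ decay of the dominant order. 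The starting-weight sums run over the fixed finite range $k\le m_j$; bounding $w^{(\alpha_j)}_{n,k}$ polynomially in $n$ (the analogue for the present WSGL correction of the starting-weight estimates of \cite{DieFord06,Lub86}, i.e. Lemma \ref{lm4.2}) and using $\tau^{\alpha_1-\alpha_j}\le C$, these terms are at most $C n^{p_0}\sum_{k=1}^{m}\abs{e^k}$ for a fixed exponent $p_0$, so after the Gronwall step they feed only the initialization term $\sum_{k=0}^{\max_j m_j}\abs{Y(t_k)-y^k}$ in \eqref{fode6}.

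Finally I would apply a discrete fractional Gronwall inequality (of the type used for Theorem \ref{thm3-2}) to the recursion for $\abs{e^n}$: since $\{\lambda^{(n)}_\ell\}$ is nonnegative, of total mass at most one, and of fractional tail of order $\alpha_1$, it gives $\abs{e^n}\le C\big(\sum_{k=0}^{\max_j m_j}\abs{Y(t_k)-y^k} + \Phi_n\big)$ with $C$ bounded on $[0,T]$ (a discrete Mittag--Leffler factor), where $\Phi_n$ is a discrete fractional integral of order $\alpha_1$ of the forcing, $\Phi_n\approx[D^{-\alpha_1}_{0,t}\abs{R(\cdot)}]_{t=t_n}$ heuristically, with the underlying finite sum always controlled term by term. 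Inserting $R^n=\sum_{j=1}^Q O(\tau^2 t_n^{\sigma_{m_j+1}-2-\alpha_j})$ --- which follows from Lemma \ref{lem:quad-2} applied to the $j$-th operator with $\sigma=\sigma_{m_j+1}$ and correction exponents $\sigma_1,\dots,\sigma_{m_j}$ (where $\sigma_{m_j}\le2$ enters), summed against $Y(t)-Y(0)=\sum_k c_k t^{\sigma_k}$ --- the contribution of operator $j$ to $\Phi_n$ is of order $\tau^2 t_n^{\sigma_{m_j+1}-2-\alpha_j+\alpha_1}$, which is $O(\tau^2)$ when $\sigma_{m_j+1}-2-\alpha_j+\alpha_1\ge0$ (using $t_n\le T$) and $O(\tau^{\sigma_{m_j+1}+\alpha_1-\alpha_j})$ otherwise (the maximum over $t_n\ge\tau$ being attained at $t_n=\tau$); taking the minimum over $j$ gives \eqref{fode6}. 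I expect the crux to be two-fold: verifying that the combined kernel $\{\lambda^{(n)}_\ell\}$, a $\tau$-weighted superposition of Gr\"{u}nwald sequences of several distinct orders, satisfies the hypotheses of a usable discrete fractional Gronwall lemma uniformly in $\tau$ and $n$; and obtaining a sharp enough bound on the starting weights $w^{(\alpha_j)}_{n,k}$ so that their contribution is genuinely absorbed into the initialization-error term rather than degrading the global convergence rate.
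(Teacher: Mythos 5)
Your overall architecture (error equation from \eqref{fode2} minus \eqref{fode3}, Lipschitz absorption of $f$, starting weights feeding the initialization term, truncation error $R^n$ converted through a fractional integral of order $\alpha_1$ into the rate $\tau^{\min\{2,\sigma_{m_j+1}+\alpha_1-\alpha_j\}}$) matches the paper's. But the step you lean on to close the recursion contains a genuine error: the sign/mass properties you attribute to the WSGL weights are false. For the weights \eqref{g-k} one has $g^{(\alpha)}_2=\frac{2+\alpha}{2}\omega^{(\alpha)}_2-\frac{\alpha}{2}\omega^{(\alpha)}_1=\frac{\alpha}{4}\left(\alpha^2+3\alpha-2\right)$, which is \emph{positive} for $\alpha>\frac{-3+\sqrt{17}}{2}\approx 0.56$. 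So it is not true that $g^{(\alpha)}_k\leq 0$ for all $k\geq1$ when $0<\alpha\leq1$ (that property holds for the plain Gr\"{u}nwald weights $\omega^{(\alpha)}_k$, not for the shifted-weighted combination $g^{(\alpha)}_k$). Consequently $\sum_{\ell\geq1}\abs{g^{(\alpha)}_\ell}>g^{(\alpha)}_0$, your kernel $\lambda^{(n)}_\ell$ is not nonnegative, and after taking absolute values its total mass exceeds one; the ``mass at most one plus fractional tail'' contraction on which your discrete Gronwall step rests is therefore unavailable. Note also that the paper's own stability proof (Theorem \ref{thm3-2} and Lemma \ref{lm4.1}) does not use coefficientwise signs but a positive-definiteness property of the quadratic form $\sum_n(\sum_k g^{(\alpha)}_k v_{n-k})v_n$, precisely because the pointwise sign structure fails.

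The paper closes this gap differently, and you would need to do the same (or something equivalent): rather than dividing only by the diagonal coefficient $\Lambda_\tau$, convolve the whole error equation with the inverse of the dominant symbol, i.e.\ with the coefficients of $\tilde{g}(z)=(1-z)^{-\alpha_1}\left(1+\frac{\alpha_1}{2}-\frac{\alpha_1}{2}z\right)^{-1}$. By Corollary \ref{crlr5-3} this yields $\tilde{g}_k=O(k^{\alpha_1-1})$ and $G^{(j)}_k=O(k^{\alpha_1-\alpha_j-1})$, so the resulting explicit recursion has kernel $\tau^{\alpha_1}\tilde{g}_{n-k}=O(\tau\,t_{n-k}^{\alpha_1-1})$ --- a genuinely weakly singular, $\tau$-small kernel --- to which the weakly singular discrete Gronwall lemma (Lemma 3.4 of \cite{LiZeng13}) applies and produces the Mittag--Leffler-type constant. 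The same convolution turns $R^n$ into $\tilde{R}^n$ with $\abs{\tilde{R}^n}\leq C\tau^2\sum_j t_n^{\sigma_{m_j+1}-2-\alpha_j+\alpha_1}$ and the starting-weight sums into $\abs{\sum_k\tilde{g}_{n-k}w^{(\alpha_j)}_{k,r}}\leq Cn^{\sigma_{m_j}-2-\alpha_j+\alpha_1}$ (via Lemma \ref{lm4.2}), after which your final accounting of the two error contributions is correct. Without this inversion step, or an alternative Gronwall lemma proved directly for the multi-term WSGL operator, the proposal does not go through.
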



\begin{theorem}[Average error estimate]\label{theorem3-3} Let $q=\min_{1\leq j \leq Q}\{\sigma_{m_j+1}-\alpha_j\}$.
If $f(t,Y)=-Y(t)$ in Theorem \ref{thm3-1}, then  we have for $K\geq1$,
\begin{equation}\label{fode6-2}
\bigg(\tau\sum_{n=1}^K|e^n|^2\bigg)^{\frac{1}{2}}
\leq C\tau^{\min\{2,q+ \frac{1}{2}\}}
+C\max_{1\leq r \leq m}|e^r|
\sum_{j=1}^Q\tau^{\frac{1}{2}-\alpha_j}K^{\max\{0,\sigma_m-\alpha_j-\frac{3}{2}\}}.
\end{equation}
\end{theorem}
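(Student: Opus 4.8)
The plan is to write down the error equation, solve it with generating functions, and convert the discrete $\ell^2$-in-time norm into a contour integral that is controlled by the analytic structure of the WSGL symbol. Subtracting \eqref{fode3} from \eqref{fode2} with $f(t,Y)=-Y$ and using $e^0=Y(0)-Y_0=0$, the error $e^n=Y(t_n)-y^n$ satisfies, for every $n\ge1$,
\[
\sum_{j=1}^Q\nu_j\tau^{-\alpha_j}\Big[\textstyle\sum_{k=0}^ng^{(\alpha_j)}_{n-k}e^k+\sum_{k=1}^{m_j}w^{(\alpha_j)}_{n,k}e^k\Big]+e^n=R^n,
\]
where, by Lemma \ref{lem:quad-2} applied to each $t^{\sigma_k}$ with $k>m_j$ (and $\sigma_{m_j}\le 2$), $|R^n|\le C\sum_j\tau^{\beta_j}n^{\beta_j-2}$ up to logarithmic factors, with $\beta_j:=\sigma_{m_j+1}-\alpha_j$ and $q=\min_j\beta_j$. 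Because $e^1,\dots,e^K$ depend, by causality, only on $R^1,\dots,R^K$ and on $w^{(\alpha_j)}_{n,k}$ for $n\le K$, I may truncate $R^n$ and the sequences $\{w^{(\alpha_j)}_{n,k}\}_n$ beyond $n=K$ without changing $e^1,\dots,e^K$, and then it suffices to bound $\tau\sum_{n\ge1}|e^n|^2$ for the truncated data. Introducing $\widehat e(z)=\sum_ne^nz^n$ and likewise $\widehat R,\widehat w^{(\alpha_j)}_k$, together with the symbol $\mathcal D_\tau(z)=1+\sum_j\nu_j\tau^{-\alpha_j}g^{(\alpha_j)}(z)$, $g^{(\alpha_j)}(z)=(1-z)^{\alpha_j}(1+\tfrac{\alpha_j}{2}-\tfrac{\alpha_j}{2}z)$, the error equation becomes $\mathcal D_\tau(z)\widehat e(z)=\widehat R(z)-\sum_j\nu_j\tau^{-\alpha_j}\sum_{k=1}^{m_j}e^k\widehat w^{(\alpha_j)}_k(z)$.

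Next I would exploit two properties of $\mathcal D_\tau$. First, $\operatorname{Re}g^{(\alpha)}(z)\ge0$ on $|z|\le1$ (the coercivity underlying the stability Theorem \ref{thm3-2}), whence $|\mathcal D_\tau(z)|\ge\operatorname{Re}\mathcal D_\tau(z)\ge1$ on the closed disk, so $1/\mathcal D_\tau\in H^\infty$ and, the truncated right-hand side being polynomial, discrete Parseval gives
\[
\tau\sum_{n=1}^K|e^n|^2\le\frac{\tau}{\pi}\int_{-\pi}^{\pi}\frac{|\widehat R(e^{i\theta})|^2+\big|\sum_j\nu_j\tau^{-\alpha_j}\sum_ke^k\widehat w^{(\alpha_j)}_k(e^{i\theta})\big|^2}{|\mathcal D_\tau(e^{i\theta})|^2}\,d\theta.
\]
Second, from $g^{(\alpha_j)}(e^{i\theta})=|2\sin(\theta/2)|^{\alpha_j}e^{-i\alpha_j\theta/2}(1+O(\theta))$ and $\nu_1>0$, $\alpha_1=\max_j\alpha_j$, one obtains the sharper sectorial bound $|\mathcal D_\tau(e^{i\theta})|\ge c\,(1+\tau^{-\alpha_1}|\theta|^{\alpha_1})$ for $|\theta|\le\pi$. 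For the consistency part I use only $|\mathcal D_\tau|\ge1$ and Parseval backwards, which reduces it to $\tau\sum_{n=1}^K|R^n|^2\le C\sum_j\tau^{2\beta_j}\,\tau\sum_{n=1}^Kn^{2\beta_j-4}$; the elementary estimate $\tau\sum_{n=1}^Kn^\gamma\le C\tau$ for $\gamma<-1$ and $\le C\tau K^{\gamma+1}$ otherwise, together with $\tau K=T$, yields $\tau\sum_{n=1}^K|R^n|^2\le C\tau^{2\min\{2,\,q+1/2\}}$, which is the first term of \eqref{fode6-2}.

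For the starting-weight part I would feed in the bound $|w^{(\alpha_j)}_{n,k}|\le Cn^{\sigma_m-\alpha_j-2}$ from Lemma \ref{lm4.2} (using $\sigma_{m_j}\le\sigma_m$ and $n\ge1$), derive from it a $\theta$-dependent estimate for the truncated symbols $\widehat w^{(\alpha_j)}_k(e^{i\theta})$, and integrate this against $|\mathcal D_\tau(e^{i\theta})|^{-2}\le C\min\{1,\tau^{2\alpha_1}|\theta|^{-2\alpha_1}\}$, splitting the $\theta$-range at $|\theta|\sim1/K$ and $|\theta|\sim\tau$. Each resulting piece is a power-law integral, and careful bookkeeping of the exponents (with $0<\alpha_j\le\alpha_1\le1$, $\sigma_m\le2$, $\tau K=T$) produces a bound $C\max_{1\le r\le m}|e^r|^2\sum_j\tau^{1-2\alpha_j}K^{2\max\{0,\sigma_m-\alpha_j-3/2\}}$; taking square roots gives the second term of \eqref{fode6-2}. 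Finally, the first $m$ steps are absorbed by the trivial estimate $\tau\sum_{n=1}^m|e^n|^2\le(mT/K)\max_r|e^r|^2$, and the logarithmic factors from Lemma \ref{lem:quad-2} and from the borderline exponent $\gamma=-1$ cost at most an arbitrarily small power of $\tau$ (resp.\ are dominated by the stated power of $K$).

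I expect the last step to be the main obstacle: obtaining the precise $K$-exponent $\max\{0,\sigma_m-\alpha_j-3/2\}$ rather than something larger requires balancing the $\theta$-singularity of the starting-weight symbols against the sectorial decay $\tau^{\alpha_1}|\theta|^{-\alpha_1}$ of $1/\mathcal D_\tau$; a crude estimate keeping only $|\mathcal D_\tau|\ge1$ discards the $\tau^{\alpha_1}$ gain and overshoots the rate, and the argument also relies on the fine structure of the starting weights encoded in \eqref{s31-8} and Lemma \ref{lm4.2}. By contrast, the consistency-error contribution is routine once Lemma \ref{lem:quad-2} is available, and the only genuinely nontrivial analytic input beyond it is the sector bound for $\mathcal D_\tau$, which is essentially the refinement of the inequality already used to prove Theorem \ref{thm3-2}.
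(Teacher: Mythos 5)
Your frequency-domain route is genuinely different from the paper's, which is a time-domain energy argument: the paper multiplies the error equation \eqref{apx-fode4} by $e^n$, sums over $n$, invokes Lemma \ref{lm4.1} (the Wang--Vong positive-definiteness of the Toeplitz form generated by $g^{(\alpha)}_k$) to discard the convolution part, and then applies $\epsilon$-Cauchy--Schwarz to move the starting-weight terms and $R^n$ to the right-hand side. Your inequality $\abs{\mathcal D_\tau(e^{i\theta})}\ge\operatorname{Re}\mathcal D_\tau(e^{i\theta})\ge 1$ is exactly the symbol-side counterpart of that lemma, so via Parseval (with your correct truncation/causality remark) you land on the same master estimate
\begin{equation*}
\tau\sum_{n=1}^K\abs{e^n}^2\le C\,\tau\sum_{n=1}^K\abs{R^n}^2
+C\,\tau\sum_{n=1}^K\Big|\sum_{j=1}^Q\nu_j\tau^{-\alpha_j}\sum_{k=1}^{m_j}w^{(\alpha_j)}_{n,k}e^k\Big|^2,
\end{equation*}
and your treatment of the consistency term from here is correct and matches the paper's.

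The gap is in the second half. You assert that bounding the starting-weight contribution with only $\abs{\mathcal D_\tau}\ge 1$ ``overshoots the rate,'' defer the computation to an unexecuted sectorial/contour analysis, and flag it as the main obstacle. This is backwards: the crude bound already gives the stated exponent, and it is precisely what the paper does. Indeed, inserting $\abs{w^{(\alpha_j)}_{n,k}}\le Cn^{\sigma_m-\alpha_j-2}$ from Lemma \ref{lm4.2} into the second term above yields
\begin{equation*}
C\max_{1\le r\le m}\abs{e^r}^2\sum_{j=1}^Q\tau^{1-2\alpha_j}\sum_{n=1}^K n^{2\sigma_m-2\alpha_j-4}
\le C\max_{1\le r\le m}\abs{e^r}^2\sum_{j=1}^Q\tau^{1-2\alpha_j}K^{\max\{0,\,2\sigma_m-2\alpha_j-3\}},
\end{equation*}
whose square root is exactly the second term of \eqref{fode6-2} (with the usual silent $\log K$ at the borderline exponent, which the paper also ignores). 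No sectorial resolvent bound, no $\theta$-splitting, and no ``fine structure'' of the weights beyond Lemma \ref{lm4.2} is needed; the extra decay $\abs{\mathcal D_\tau(e^{i\theta})}^{-1}\le C\tau^{\alpha_1}\abs{\theta}^{-\alpha_1}$ would only improve a bound that is already sufficient. As written, your proposal proves the first term of \eqref{fode6-2} but not the second; the fix is the one-line computation above rather than the machinery you planned.
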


Two special cases of Theorem \ref{theorem3-3} are presented below. If there is no correction terms, i.e., $m_j=0$, then we have
\begin{eqnarray}
\bigg(\tau\sum_{n=1}^K|e^n|^2\bigg)^{1/2}
&\leq&C\tau^{\min\{2,q+ 0.5\}}=C\tau^{\min\{2, \sigma_1-\alpha_1+ 0.5\}}.
\label{apx-fode4-7}
\end{eqnarray}
If $m_j=m$ and $\max_{1\leq r \leq m}|e^r|\leq C\tau^{\sigma_{m+1}}$, then we have
\begin{eqnarray}
\bigg(\tau\sum_{n=1}^K|e^n|^2\bigg)^{1/2}
&\leq&C\tau^{\min\{2,\sigma_{m+1}-\alpha_1+0.5\}}.
\label{apx-fode4-8}
\end{eqnarray}


\section{Application to   time-fractional diffusion-wave equation}\label{sec:3}
In this  section, we consider the following  time-fractional
diffusion-wave  equation,
see e.g. \cite{CaiCZh14,Liu-etal13}:
\begin{equation}\label{wave}
\left\{\begin{aligned}
&\px[t]^2U+\nu\,{}_{C}D_{0,t}^{1+\alpha}U=\mu\,\px^2 U+f(x,t),
{\quad}(x,t){\,\in\,}\Omega{\times}(0,T],T>0,\\
&U(x,0)=\phi_0(x),{\quad}\px[t]U(x,0)=\psi_0(x),{\quad}x{\,\in\,}\bar{\Omega},\\
&{U(x,t)=0,{\quad}(x,t){\,\in\,} \partial\Omega\times(0,T]},
\end{aligned}\right.
\end{equation}
where $0<\alpha\leq1,\,\nu\geq 0,\mu>0$, $\Omega=(a,b)$.
We apply the quadrature formula \eqref{s31-1-3} in time and
spectral element method in space for the discretization of Eq. \eqref{wave}.
We also present  the rigorous stability
and convergence analysis of the present numerical scheme,
the proofs of which  can be found in the supplementary material.

The key assumption here is that the analytical solution $U(t)=U(x,t)$
to \eqref{wave} satisfies the following form
\begin{equation}\label{assu:fpde}
U(t)-U(0)-t\px[t]U(0)=\sum_{r=1}^{m}c_r(x)t^{\sigma_r}+ c_{m+1}(x)t^{\sigma_{m+1}}+\cdots,
\end{equation}
where $1<\sigma_r<\sigma_{r+1}$. Indeed, when $f(x,t)$ is smooth in time and $\alpha$ is rational,
the analytical solution of \eqref{wave} has the form as \eqref{assu:fpde},
see e.g. \cite{Diethelm-B10}.

\subsection{Time discretization}
Denote  $V(x,t)=\px[t]U(x,t)$, where $U(x,t)$ satisfies \eqref{assu:fpde}.
Then $V(t)=V(x,t)$ satisfies
$V(t)-V(0)=\sum_{r=1}^{m+1}d_r(x) t^{\sigma_r-1}+\cdots.$
Hence, we derive from \eqref{wave}
\begin{eqnarray}
\px[t]V(t)+\nu\,{}_{RL}D_{0,t}^{\alpha}(V(t)-V(0))&=&\mu\,\px^2 U(t)+f(t),\label{wave:1}\\
\px[t]\px^2 U(t)&=&\px^2 V(t).\label{wave:2}
\end{eqnarray}
The main task in the following is to construct a second-order approximation for
each differential operator in \eqref{wave:1}--\eqref{wave:2}, i.e.,
the first-order time derivative operator $\px[t]$
and the time-fractional derivative operator ${}_CD_{0,t}^{\alpha}$.

For simplicity,  we denote  $U^n=U^n(\cdot)=U(\cdot,t_n)$ and
 $\delta_tU^{n+\mfrac}=\frac{U^{n+1}-U^n}{\tau}.$
From \eqref{SGL}, we have $\delta_tg^{n+\mfrac}=\mathcal{B}^{1,n+1}_{0}g
=\mathcal{B}^{1,n}_{1}g$, which yields
\begin{equation}\label{s5:eq-0}
 \px[t]g(t_{n+1})+\px[t]g(t_{n})
 =\mathcal{B}^{1,n+1}_{0}g+\mathcal{B}^{1,n}_{1}g
 +O(\tau^2t_{n}^{\sigma_r-3}),{\quad}g(t)=t^{\sigma_r}.
\end{equation}
Let $\widetilde{U}(t)=U(t)-U(0)-tV(0)$. Then   from \eqref{SGL} and \eqref{s5:eq-0},  we have
\begin{equation}\label{s5:eq-1}
\frac{1}{2}\left[\px[t]\widetilde{U}(t_{n+1})+\px[t]\widetilde{U}(t_{n})\right]
=\delta_t\widetilde{U}^{n+\mfrac}
+\frac{1}{\tau}\sum_{r=1}^{m_1} u_{n,r}\widetilde{U}^r
+O(\tau^2t_{n}^{\sigma_{m_1+1}-3}),
\end{equation}
where $m_1\leq m$ and the starting weights $\{u_{n,r}\}$ are chosen such that \eqref{s5:eq-1} is exact for
$\widetilde{U}(t)=t^{\sigma_r}(1\leq r \leq m_1)$, which leads to 
\begin{equation}\label{s5:eq-2}
\sum_{k=1}^{m_1} u_{n,k}k^{\sigma_r}
=\frac{\sigma_r}{2}\Big((n+1)^{\sigma_r-1}+n^{\sigma_r-1}\Big)
-\Big((n+1)^{\sigma_r}-n^{\sigma_r}\Big)=O(n^{\sigma_r-3}).
\end{equation}
Note that $\widetilde{U}=U(t)-U(0)-tV(0)$. We have from \eqref{s5:eq-1}
\begin{equation}\label{s5:eq-3}
\frac{1}{2}\left[\px[t]{U}(t_{n+1})+\px[t]{U}(t_{n})\right]
=\delta_t{U}^{n+\mfrac}
+\frac{1}{\tau}\sum_{r=1}^{m_1} u_{n,r}(U^r-U^0-t_rV^0)
+O(\tau^2t_{n}^{\sigma_{m_1+1}-3}).
\end{equation}
We can similarly  obtain
\begin{equation}\label{s5:eq-4}
\frac{1}{2}\left[\px[t]{V}(t_{n+1})+\px[t]{V}(t_{n})\right]
=\delta_t{V}^{n+\mfrac}
 +\frac{1}{\tau}\sum_{r=1}^{m_2} v_{n,r}(V^r-V^0)
 +O(\tau^2t_{n}^{\sigma_{m_2+1}-4}),
\end{equation}
where $m_2\leq m$ and $\{v_{n,r}\}$ are chosen such that \eqref{s5:eq-4} is exact
for $V(t)=t^{\sigma_r-1}$, which yields
\begin{equation}\label{s5:eq-5}
\sum_{k=1}^{m_2} v_{n,k}k^{\sigma_r-1}
=\frac{\sigma_r-1}{2}\Big((n+1)^{\sigma_r-2}+n^{\sigma_r-2}\Big)
-\Big((n+1)^{\sigma_r-1}-n^{\sigma_r-1}\Big)=O(n^{\sigma_r-4}).
\end{equation}
From 
\eqref{s31-1-3},  we can also choose $m_3\leq m$ such that
\begin{equation}\label{s5:eq-5-2}
\left[{}_CD_{0,t}^{\alpha}V(t)\right]_{t=t_{k}}
=\mathcal{{A}}_{0,-1}^{\alpha,k,m_3}\widehat{V}
+O(\tau^2t_{k}^{\sigma_{m_3+1}-3-\alpha}),
\end{equation}
where $\widehat{V}=V-V^0$, $k=n,n+1$.

Combining \eqref{s5:eq-4} and \eqref{s5:eq-5-2}, we have the following time discretization
for \eqref{wave:1}
\begin{equation}\label{s5:eq-6}\begin{aligned}
&\delta_tV^{n+\mfrac}+\frac{1}{\tau}\sum_{r=1}^{m_2} v_{n,r}({V}^{r}-V^0)
+\frac{\nu}{2}\left(\mathcal{{A}}_{0,-1}^{\alpha,n+1,m_3}\widehat{V}
+\mathcal{{A}}_{0,-1}^{\alpha,n,m_3}\widehat{V} \right)\\
=&\mu\,\px^2 U^{n+\mfrac}+f^{n+\mfrac}+O(\tau^2t_{n}^{\sigma_{m_2+1}-4})
+O(\tau^2t_{n}^{\sigma_{m_3+1}-3-\alpha}).
\end{aligned}\end{equation}
From \eqref{s5:eq-3},  the time discretization of  Eq. \eqref{wave:2} is \begin{equation}\label{s5:eq-7}\begin{aligned}
\delta_t \px^2 U^{n+\mfrac} +\frac{1}{\tau}\sum_{r=1}^{m_1} u_{n,r}\px[x]^2(U^r-U^0-t_rV^0)
= \px^2 V^{n+\mfrac}+O(\tau^2t_{n}^{\sigma_{m_1+1}-3}).
\end{aligned}\end{equation}

\subsection{The fully discrete spectral element method}
Let us introduce some notations before presenting our fully discrete schemes.
Let  $\Omega=(a,b)$  and $M$ be a positive integer.
Let $\Pi=\{a=x_0<x_1<...<x_{M}=b\}$
be a partition of the interval
$\Omega$.
Denote $N=(N_1,N_2,...,N_M)$,  $N_i$ is a positive integer, and
\begin{equation*}
\Omega_i=(x_{i-1},x_i),{\quad}h_i=x_{i}-x_{i-1},{\quad}h=\max_{1\leq i\leq M}\{{h_i}/{N_i}\}.
\end{equation*}

Denote $\mathbb{P}_{K}(I)$ as the polynomial space defined on the
domain $I$ with degree no greater than $K$. The approximation spaces
$V_N,V_N^0$ are defined as follows:
\begin{equation*}
 \begin{aligned}
&V_N=\{ v{\,\in\,}C(\Omega):v|_{\Omega_i} {\,\in\,} \mathbb{P}_{N_i}(\Omega_i),~1\leq{i}\leq{M}\},~~~V_N^{0}=V_N{\cap}H_0^1(\Omega).
\end{aligned}
\end{equation*}

The inner product $(\cdot,\cdot)$ and norm $\|\cdot\|$ are defined by:
\begin{equation*}
(u,v)=\int_{\Omega}{u{v}}\dx,~~~~~~~\|u\|=\Big(\int_{\Omega}|u|^2\dx\Big)^{1/2},~~~{u,v\in{L^2(\Omega)}}.
\end{equation*}

From \eqref{s5:eq-6}--\eqref{s5:eq-7}, we present the
Legendre Galerkin spectral element method (LGSEM) for
\eqref{wave:1}--\eqref{wave:2}: For $n=0,1,...,n_T-1$  and  $\forall u,v\in V_N^0$,
we find $u_N^{n+1},v_N^{n+1}\in V_N^0$,  such that
 \begin{eqnarray}
 &&(\delta_tv_N^{n+\mfrac},v) +\frac{1}{\tau}\sum_{r=1}^{m_2} v_{n,r}(v_N^r-v_N^0,v)
 +\frac{\nu}{2}\bigg[\left(\mathcal{{A}}_{0,-1}^{\alpha,n+1,m_3}\hat{v}_N,v\right)
 +\left(\mathcal{{A}}_{0,-1}^{\alpha,n,m_3}\hat{v}_N,v\right)\bigg]\nonumber\\
 &&{\quad\quad\qquad}
 +\mu\,(\px u_N^{n+\mfrac},\px v) =(I_Nf^{n+\mfrac},v),\label{scheme2-1}\\
 &&(\delta_t\px[x]u_N^{n+\frac{1}{2}},\px[x]u)
 +\frac{1}{\tau}\sum_{r=1}^{m_1} u_{n,r}(\px[x](u_N^r-u_N^0-t_rv_N^0),\px[x]u)
 = (\px v_N^{n+\frac{1}{2}},\px u),\label{scheme2-2}\\
 &&u_N^0=P_N^{1,0}U(0),\qquad v_N^0=P_N^{1,0}V(0),\label{scheme2-3}
 \end{eqnarray}
in which $\hat{v}_N^{n}=v_N^{n}-v_N^0$, $\mathcal{{A}}_{0,-1}^{\alpha,n,m_3}$
 is defined by  \eqref{s31-1-3}, $m_k(k=1,2,3)$ are suitable positive integers,
 $\{u_{n,r}\}$ satisfy \eqref{s5:eq-2},
$\{v_{n,r}\}$ satisfy \eqref{s5:eq-5},
and $I_N$ is the Legendre--Gauss--Lobatto interpolation operator defined by
$$(I_Nu)(x^i_k)=u(x^i_k),~~\,k=0,1,...,N_i,\,\,i=1,...,M,\,\,u\in C(\bar{\Omega}),$$
where $\{x_k^i\}$   are the Legendre--Gauss--Lobatto points on $\bar{\Omega}_i$.

\begin{remark}
To get  $\{u_N^k\}$ and $\{v_N^k\}$  for $1\leq k \leq m,m=\max\{m_1,m_2,m_3\}$,
we can let $n=0,1,...,m-1$ in \eqref{scheme2-1}--\eqref{scheme2-3} and solve the resulting system.
We can also use other  high-order methods in time
to obtain $\{u_N^k\}$ and $\{v_N^k\}$ for  $1\leq k \leq m$.
\end{remark}

\subsection{Stability and convergence}\label{sec4-3}
This subsection presents
 the stability and convergence of the scheme \eqref{scheme2-1}--\eqref{scheme2-3}.
We  give the following stability result.
\begin{theorem}\label{thm:stability}
Suppose that $u_N^n$ and $v_N^n$  $(n=0,1,...,n_T)$ are solutions to
\eqref{scheme2-1}--\eqref{scheme2-3}.
If $\sigma_{m_1}\leq3$ and $\sigma_{m_2},\sigma_{m_3}\leq 4$,
then there exists a positive constant $C$ independent of $n,\tau$ and $N$ such that
\begin{eqnarray}
\|v_N^{n}\|^2 &+\mu\|\px[x]u_N^{n}\|^2\leq C\bigg(\|v_N^{0}\|^2 +\mu\|\px[x]u_N^{0}\|^2+\|\px[x]v_N^{0}\|^2
+ \sum_{r=1}^{m_1} \|\px[x]\delta_tu_N^{r-\mfrac}\|^2 \nonumber\\
&+  \sum_{r=1}^{m_2}\|\delta_tv_N^{r-\mfrac}\|^2+ \sum_{r=1}^{m_3}\|\delta_tv_N^{r-\mfrac}\|^2
+\tau\sum_{k=0}^{n}\|f^{k}\|^2\bigg).\label{s5:eq-8}
\end{eqnarray}
\end{theorem}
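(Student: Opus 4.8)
The plan is to prove Theorem~\ref{thm:stability} by a discrete energy argument, testing the two equations \eqref{scheme2-1}--\eqref{scheme2-2} with suitable discrete quantities and summing in $n$. Concretely, I would take $v = \tau(v_N^{n+1}+v_N^n)/2 = \tau\,\delta_t$-less average of $v_N$ in \eqref{scheme2-1}, and $u = \mu\tau(u_N^{n+1}+u_N^n)/2$ (or rather test \eqref{scheme2-2} with $\mu\tau\,\partial_x$-averaged $u_N$) so that the coupling terms $\mu(\partial_x u_N^{n+1/2},\partial_x v)$ and $(\partial_x v_N^{n+1/2},\partial_x u)$ cancel upon addition. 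The leading terms then telescope: $(\delta_t v_N^{n+1/2}, v_N^{n+1}+v_N^n) = (\|v_N^{n+1}\|^2-\|v_N^n\|^2)/\tau$ and similarly $\mu(\delta_t\partial_x u_N^{n+1/2},\partial_x u_N^{n+1}+\partial_x u_N^n) = \mu(\|\partial_x u_N^{n+1}\|^2-\|\partial_x u_N^n\|^2)/\tau$, which after summation over $n$ give exactly the left-hand side of \eqref{s5:eq-8} minus the initial-data terms.

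The core difficulty is controlling the three fractional/history terms: the weighted-sum correction terms $\frac{1}{\tau}\sum_{r=1}^{m_2} v_{n,r}(v_N^r - v_N^0)$, the analogous $\partial_x$-weighted term in \eqref{scheme2-2}, and the discrete fractional operator $\mathcal{A}_{0,-1}^{\alpha,k,m_3}\hat v_N$ for $k=n,n+1$. For the discrete fractional operator I would invoke the positive-definiteness / coercivity of the convolution weights $g_k^{(\alpha)}$ — i.e. a discrete fractional Grönwall or the fact that $\sum_n (\mathcal{A}_{0,-1}^{\alpha,n}\hat v_N, \hat v_N^n) \ge 0$ (up to the starting-weight contribution), which is the standard ingredient in stability proofs for WSGL-type schemes and is presumably established in an earlier lemma (the excerpt refers to $g_k^{(\alpha)}$ being generated by $(1-z)^\alpha(1+\frac{\alpha}{2}-\frac{\alpha}{2}z)$, from which one reads off nonnegativity of the real part of the symbol on the unit circle). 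The correction-term sums are finite sums ($m_1,m_2,m_3$ fixed), so by Cauchy--Schwarz and Young's inequality they are bounded by $\varepsilon\sum_n\|v_N^{n}\|^2$-type terms plus the explicit data quantities $\sum_{r=1}^{m_2}\|\delta_t v_N^{r-1/2}\|^2$, etc.; here I would use that $v_{n,k} = O(n^{\sigma_r - 4})$ from \eqref{s5:eq-5} (hence $\tau^{-1}|v_{n,k}|$ summed against the telescoped energy is summable provided $\sigma_{m_2}\le 4$, which is exactly the hypothesis), and likewise $u_{n,k}=O(n^{\sigma_r-3})$ with $\sigma_{m_1}\le 3$. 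The source term is handled by $(I_N f^{n+1/2}, v_N^{n+1}+v_N^n) \le \tau\|I_N f^{n+1/2}\|^2 + \tfrac14(\|v_N^{n+1}\|^2+\|v_N^n\|^2)$ and stability of $I_N$ in $L^2$.

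After these bounds, summation over $n=0,\dots,\ell-1$ yields
\begin{equation*}
\|v_N^\ell\|^2 + \mu\|\px[x] u_N^\ell\|^2 \le C\Big(\text{data terms}\Big) + C\tau\sum_{k=0}^{\ell-1}\big(\|v_N^{k+1}\|^2 + \|v_N^k\|^2\big) + C\tau\sum_{k=0}^{\ell}\|f^k\|^2,
\end{equation*}
and a discrete Grönwall inequality (absorbing the $\tau\sum\|v_N^k\|^2$ term, legitimate once $\tau$ is small relative to the Grönwall constant, or unconditionally after the usual rearrangement) removes the feedback term and produces \eqref{s5:eq-8} with a constant independent of $n,\tau,N$. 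The regularity-index restrictions $\sigma_{m_1}\le 3$, $\sigma_{m_2},\sigma_{m_3}\le 4$ enter precisely to guarantee the starting-weight sums decay fast enough to be absorbed; the main obstacle I anticipate is making the fractional-operator history term genuinely nonnegative in the presence of the $m_3$ starting weights, which I expect to handle by splitting $\mathcal{A}_{0,-1}^{\alpha,k,m_3}\hat v_N = \mathcal{A}_{0,-1}^{\alpha,k}\hat v_N + \tau^{-\alpha}\sum_{r=1}^{m_3} w_{k,r}^{(\alpha)}\hat v_N^r$, using positive-definiteness for the first piece and the decay estimate $w_{k,r}^{(\alpha)} = O(k^{\sigma_r - \alpha - ?})$ (analogous to \eqref{s5:eq-2},\eqref{s5:eq-5}) together with the hypothesis $\sigma_{m_3}\le 4$ for the second.
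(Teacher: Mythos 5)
Your proposal follows essentially the same route as the paper's proof: test \eqref{scheme2-1} with a multiple of $v_N^{n+\mfrac}$ and \eqref{scheme2-2} with a multiple of $u_N^{n+\mfrac}$ to cancel the coupling terms and telescope the energy, split the discrete fractional operator into its convolution part (handled by the positive-definiteness of the $g_k^{(\alpha)}$ weights, Lemma \ref{lm4.1}) plus the starting-weight contributions (bounded via $|u_{n,r}|\leq Cn^{\sigma_{m_1}-3}$, $|v_{n,r}|\leq Cn^{\sigma_{m_2}-4}$, $|W_{n,r}|\leq Cn^{\sigma_{m_3}-3-\alpha}$, which is where the hypotheses $\sigma_{m_1}\leq 3$, $\sigma_{m_2},\sigma_{m_3}\leq 4$ enter), and finish with Cauchy--Schwarz and the discrete Gronwall inequality for small enough $\tau$. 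This matches the paper's argument in all essential respects.
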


For the nonnegative integer $k$, $H^k(\Omega)$ is the Sobolev space
equipped with the norm $\|\cdot\|_{H^k(\Omega)}$ and  semi-norm
$|\cdot|_{H^k(\Omega)}$ defined by
$$|v|_{H^k(\Omega)}=\bigg(\sum_{l=0}^{k}\|\partial^{l}_x v\|^2\bigg)^{1/2}
~~\text{and}~~\|v\|_{H^k(\Omega)}=\bigg(\sum_{s=0}^{k}|v|^2_{H^s(\Omega)}\bigg)^{1/2},{\quad}v{\,\in\,}H^k(\Omega).$$
Next, we present the convergence analysis.
\begin{theorem}\label{thm:convergence}
Suppose that $n,n_T$ and $r$ are positive integers with $0\leq n\leq n_T$ and
$U(t)=U(x,t)$ is the solution to \eqref{wave} satisfying
{$U(t)-U(0)-t\px[t]U(0)=\sum_{r=1}^{m}c_rt^{\sigma_r}+u(t)t^{\sigma_{m+1}}$,
$\sigma_{r} < \sigma_{r+1}$, $u(t)\in C[0,T]$ for each  $x$,}
$V(x,t)=\px[t]U(x,t)$, $u_N^n$ and $v_N^n$
are the solutions to the scheme \eqref{scheme2-1}--\eqref{scheme2-3}, respectively,
$m_1,m_2,m_3\leq m$
with $\sigma_{m_1}\leq 3,\sigma_{m_2},\sigma_{m_3}\leq4$.
For fixed $t$, $U(t)\in H_0^1(\Omega)\cap H^r(\Omega)$, and $f\in C(0,T;H^r(\Omega))$.
If $\sum_{k=1}^{m_1}\|\px[x]\delta_t(u_N-U)^{k-\frac{1}{2}}\|^2
+\sum_{k=1}^{\max\{m_2,m_3\}}\|\delta_t(v_N-V)^{k-\frac{1}{2}}\|^2$
$\leq C\left(\tau^{2\min\{2,\sigma_{m_1+1}-0.5,\sigma_{m_2+1}-1.5,\sigma_{m_3+1}-0.5-\alpha\}}
+ h^{2r-2}\right)$,
then for small enough $\tau$, there exists a positive constant $C$
independent of $n,\tau$ and $h$, such that
\begin{eqnarray*}
\|\px (u_N^{n}-U(t_n))\|
&\leq& C\left(\tau^{\min\{2,\sigma_{m_1+1}-0.5,\sigma_{m_2+1}-1.5,\sigma_{m_3+1}-0.5-\alpha\}}
+ h^{r-1}\right).\label{s5:eq-9}
\end{eqnarray*}
\end{theorem}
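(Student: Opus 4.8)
The strategy is an error splitting through the $H^1_0$-projection, followed by the a priori estimate of Theorem \ref{thm:stability} applied to the resulting error system, and finally a careful summation of the time-singular truncation errors in the $\tau$-weighted discrete $\ell^2$-norm.

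Let $P_N^{1,0}\colon H_0^1(\Omega)\to V_N^0$ be the $H^1_0$-orthogonal projection. Split
\[
U(t_n)-u_N^n=\big(U(t_n)-P_N^{1,0}U(t_n)\big)+\big(P_N^{1,0}U(t_n)-u_N^n\big)=:\rho^n+\theta^n,
\]
and likewise $V(t_n)-v_N^n=\eta^n+\xi^n$. Standard spectral-element approximation theory gives $\|\px[x]\rho^n\|+\|\px[x]\eta^n\|\le Ch^{r-1}\big(\|U(t_n)\|_{H^r}+\|V(t_n)\|_{H^r}\big)$, and, since $P_N^{1,0}$ commutes with $\px[t]$ and $\delta_t$ averages $\px[t]$, the same bound holds for $\px[x]\delta_t\rho^{r-\mfrac}$ and $\delta_t\eta^{r-\mfrac}$. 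Moreover $\theta^0=\xi^0=0$ by \eqref{scheme2-3}, and $(\px[x]\rho^n,\px[x]v)=(\px[x]\eta^n,\px[x]u)=0$ for all $u,v\in V_N^0$, so the projection errors drop out of the two stiffness terms of \eqref{scheme2-1}--\eqref{scheme2-2}.

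Subtracting the scheme \eqref{scheme2-1}--\eqref{scheme2-3} from the identities obtained by testing the time-discrete equations \eqref{s5:eq-6}--\eqref{s5:eq-7} against $v,u\in V_N^0$, one obtains an error system for $(\theta^n,\xi^n)$ with the same left-hand sides as \eqref{scheme2-1}--\eqref{scheme2-2} and with $I_Nf^{n+\mfrac}$ replaced by a consistency functional whose principal parts are the truncation terms $R_1^n=O(\tau^2 t_n^{\sigma_{m_2+1}-4})+O(\tau^2 t_n^{\sigma_{m_3+1}-3-\alpha})$ from \eqref{s5:eq-6} and $R_2^n=O(\tau^2 t_n^{\sigma_{m_1+1}-3})$ from \eqref{s5:eq-7}, together with lower-order terms built from $\eta^n,\delta_t\eta^n,I_Nf-f$ that contribute $O(h^{r-1})$. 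The stated orders of $R_1^n,R_2^n$ follow by writing $U-U(0)-t\px[t]U(0)=\sum_{r=1}^m c_r t^{\sigma_r}+u(t)t^{\sigma_{m+1}}$ and applying Lemma \ref{lem:quad-2} to each $t^{\sigma_r}$ (the starting weights \eqref{s5:eq-2}, \eqref{s5:eq-5}, \eqref{s5:eq-5-2} annihilate the singular terms with index $\le m_j$, which is why only $\sigma_{m_j+1}$ survives), together with Lemma \ref{lm4.2} for the remainder $u(t)t^{\sigma_{m+1}}$; here the conditions $\sigma_{m_1}\le 3$ and $\sigma_{m_2},\sigma_{m_3}\le4$ keep the starting-weight corrections well-behaved. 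Running the energy argument of Theorem \ref{thm:stability} on this \emph{inhomogeneous} system (same telescoping and use of the coercivity of the corrected discrete fractional operator, only with an extra forcing to carry along) yields
\[
\mu\|\px[x]\theta^n\|^2+\|\xi^n\|^2
\le C\Big(\sum_{r=1}^{m_1}\|\px[x]\delta_t\theta^{r-\mfrac}\|^2+\sum_{r=1}^{\max\{m_2,m_3\}}\|\delta_t\xi^{r-\mfrac}\|^2+\tau\sum_{k=1}^{n}\|\mathcal{R}^k\|_\star^2\Big),
\]
where $\|\cdot\|_\star$ is the appropriate (dual) norm. The starting-value block is bounded by the theorem's hypothesis together with $\delta_t\theta^{r-\mfrac}=\delta_t(U-u_N)^{r-\mfrac}+\delta_t\rho^{r-\mfrac}$ (and similarly for $\xi$), giving $\le C(\tau^{2\min\{2,\sigma_{m_1+1}-0.5,\sigma_{m_2+1}-1.5,\sigma_{m_3+1}-0.5-\alpha\}}+h^{2r-2})$. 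For the last sum, the projection-error parts contribute $O(h^{2r-2})$ after using $\tau\sum_k1\le T$, while for a generic truncation term $\tau^2t_k^{\gamma}$ one computes
\[
\tau\sum_{k=1}^{K}\big(\tau^2 t_k^{\gamma}\big)^2=\tau^{5+2\gamma}\sum_{k=1}^{K}k^{2\gamma}\le C\begin{cases}\tau^{4},&\gamma>-\tfrac12,\\[2pt]\tau^{5+2\gamma},&\gamma<-\tfrac12,\end{cases}
\]
with an extra logarithmic factor (absorbable into a slightly reduced exponent) at $\gamma=-\tfrac12$. Taking square roots turns $\gamma=\sigma_{m_1+1}-3,\ \sigma_{m_2+1}-4,\ \sigma_{m_3+1}-3-\alpha$ into the rates $\sigma_{m_1+1}-\tfrac12,\ \sigma_{m_2+1}-\tfrac32,\ \sigma_{m_3+1}-\tfrac12-\alpha$, each capped at $2$. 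Combining the bounds on $\|\px[x]\theta^n\|$ with $\|\px[x]\rho^n\|\le Ch^{r-1}$ via the triangle inequality gives the asserted estimate.

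\textbf{Main obstacle.} Two points require the most care. First, one must verify that the energy argument of Theorem \ref{thm:stability} genuinely transfers to the inhomogeneous error system: the test functions must be chosen consistently with the coupling \eqref{scheme2-2} (a discrete time derivative of $\theta$ paired against a discrete time derivative of $\xi$), and every cross term as well as the $\nu$-fractional term must either retain its sign or be absorbed, now with an additional forcing $\mathcal{R}^k$ to estimate by Young's inequality and discrete Gr\"onwall. Second, the summation of the time-singular truncation errors — handling the borderline exponent and correctly combining the three separate $m_j$-contributions so as to land exactly on $\min\{2,\sigma_{m_1+1}-0.5,\sigma_{m_2+1}-1.5,\sigma_{m_3+1}-0.5-\alpha\}$ — is the delicate bookkeeping of the proof.
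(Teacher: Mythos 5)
Your proposal is correct and follows essentially the same route as the paper's proof in the supplementary material: the same splitting through the $H^1_0$-projection $P_N^{1,0}$ (the paper's $e_u,\eta_u,e_v,\eta_v$ are your $\theta,\rho,\xi,\eta$), the same application of the energy estimate of Theorem \ref{thm:stability} to the inhomogeneous error system with forcing $G^n,H^n$, the same bounds $\|H^n\|\leq Cn^{\sigma_{m_1+1}-3}\tau^{\sigma_{m_1+1}-1}$ and $\|G^n\|\leq C(h^r+n^{\sigma_{m_2+1}-4}\tau^{\sigma_{m_2+1}-2}+n^{\sigma_{m_3+1}-3-\alpha}\tau^{\sigma_{m_3+1}-1-\alpha})$, and the same $\tau$-weighted summation via $\sum_{k=1}^nk^{\beta}\leq C\max\{1,n^{1+\beta}\}$ leading to the exponent $\min\{2,\sigma_{m_1+1}-0.5,\sigma_{m_2+1}-1.5,\sigma_{m_3+1}-0.5-\alpha\}$ and the triangle inequality with Lemma \ref{lem2-2}. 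Your explicit treatment of the borderline exponent $\gamma=-\tfrac{1}{2}$ is in fact slightly more careful than the paper, which simply excludes $\beta=-1$ from its summation lemma.
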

\vskip -10pt
\begin{remark}\label{remark6}
If the analytical solution $U(x,t)$ to  \eqref{wave} is sufficiently smooth in time, then
the convergence rate of \eqref{scheme2-1}--\eqref{scheme2-3} in time is $O(\tau^{2})$ by choosing
$m_1=m_2=0$ and $m_3=2$. For smooth solutions with $m_1=m_2=0$, we also have:
(i) if $m_3=0$ in \eqref{scheme2-1}--\eqref{scheme2-3}, then the convergence rate in time
is $O(\tau^{1.5-\alpha})$;
(ii) if  $m_3=1$ in \eqref{scheme2-1}--\eqref{scheme2-3}, then the convergence rate in time
is $O(\tau^{2})$ for $\alpha<1/2$,  $O(\tau^{2.5-\alpha})$    for $\alpha>1/2$,
or  $O(\log(n)\tau^{2})$  for $\alpha=1/2$ at $t=t_n$.
\end{remark}

\begin{remark}
The methodology here can be readily  extended to
  two-term time-fractional subdiffusion equation and
more generalized multi-term time-fractional subdiffusion equations,
see e.g. \cite{JinLaz-etal15,Liu-etal13,RenSun14}.
\end{remark}

\section{Numerical examples}\label{sec:numerical}
In this section, we present some numerical simulations to verify our theoretical analysis presented in the previous sections.
\begin{example}\label{s6eg0}
Consider the following two-term FODE
\begin{equation}\label{sec6:eq0}
{}_{C}D^{2\alpha}_{0,t}Y(t)+\frac{3}{2}{}_{C}D^{\alpha}_{0,t}Y(t)=-\frac{1}{2}Y(t),{\quad}t{\,\in\,}
 {(0,T],T>0}\\
\end{equation}
subject to the initial condition $Y(0)=1$,   and $0<\alpha\leq1/2$.
\end{example}
The analytical solution of \eqref{sec6:eq0} is
$Y(t)=2E_{\alpha}(-t^{\alpha}/2)-E_{\alpha}(-t^{\alpha}),$
where      $E_{\alpha}(t)$ is the Mittag--Leffler function defined by
$E_{\alpha}(t)=\sum_{k=0}^{\infty}\frac{t^{k}}{\Gamma(k\alpha+1)}.$

To solve \eqref{sec6:eq0}, we apply the method  \eqref{fode3}  with $m_1=m_2=m$ and $\alpha_1=2\alpha,\alpha_2=\alpha$ in computation.
The maximum absolute error $\|e\|_{\infty}$  is measured by
$\|e\|_{\infty}=\max_{0\leq n \leq T/\tau}|e^n|,  e^n=Y(t_n)-y^n.$

First, we observe from Tables \ref{tb6-1-1}--\ref{tb6-3-2} that higher accuracy is obtained
with correction terms ($m>0$) than that without correction terms ($m=0$).
For $\alpha=0.5$, compared with $m=0$, we have gained one order of  magnitude in the maximum error when $m=1$ and  two orders of  magnitude when $m=2,3$, see Table  \ref{tb6-1-1}.  For
$\alpha=0.1$, we observe similar improvement in accuracy, see Table \ref{tb6-3-1}.
For the error at final time $t=1$, we also have similar effects  for
$\alpha=0.1,0.5$ and have even more significant improvement in accuracy, see Tables  \ref{tb6-1-2} and \ref{tb6-3-2}. The convergence order  for $\alpha=0.5$ in the maximum sense is  consistent with the theoretical prediction in Theorem  \ref{thm3-1}, which is $\min\{2,{(m+2)\alpha}\}$, while a lower convergence rate is observed  for $\alpha=0.1$.

\begin{table}[!h]
\caption{{The maximum error $\|e\|_{\infty}$ of the method \eqref{fode3}, $\sigma_k=(k+1)\alpha$,  $\alpha=0.5$,  ${T=1}$.}}\label{tb6-1-1}
\centering\footnotesize
\begin{tabular}{|c|c|c|c|c|c|c|c|c|c|c|c|c|}
\hline
 $\tau$ & $m=0$ & Order& $m=1$ & Order& $m=2$ & Order & $m=3$   & Order\\
 \hline
$2^{-8}$ &8.1812e-4&    &6.5427e-5&    &3.2368e-6&    &1.0496e-6&    \\
$2^{-9}$ &4.2685e-4&0.93&2.4571e-5&1.41&8.6440e-7&1.90&2.8393e-7&1.88\\
$2^{-10}$&2.2033e-4&0.94&9.0197e-6&1.43&2.2482e-7&1.93&7.4557e-8&1.91\\
$2^{-11}$&1.1340e-4&0.96&3.3073e-6&1.45&5.8568e-8&1.95&1.9559e-8&1.94\\
$2^{-12}$&5.7783e-5&0.97&1.1952e-6&1.46&1.4996e-8&1.96&5.0336e-9&1.95\\
\hline
\end{tabular}
\end{table}

\begin{table}[!h]
\caption{{The maximum error $\|e\|_{\infty}$ of the method \eqref{fode3},  $\sigma_k=(k+1)\alpha$,  $\alpha=0.1$, ${T=1}$.}}\label{tb6-3-1}
\centering\footnotesize
\begin{tabular}{|c|c|c|c|c|c|c|c|c|c|c|c|c|}
\hline
 $\tau$ & $m=0$ & Order& $m=1$ & Order& $m=3$ & Order & $m=5$   & Order\\
 \hline
$2^{-8}$ &1.1149e-2&    &1.0250e-3&    &1.0556e-5&    &2.3121e-7&     \\
$2^{-9}$ &1.0262e-2&0.11&9.0252e-4&0.18&8.5194e-6&0.30&1.7132e-7&0.43 \\
$2^{-10}$&9.4163e-3&0.12&7.9112e-4&0.18&6.8266e-6&0.31&1.2569e-7&0.44 \\
$2^{-11}$&8.6257e-3&0.12&6.9196e-4&0.19&5.4520e-6&0.32&9.1801e-8&0.45 \\
$2^{-12}$&7.8776e-3&0.13&6.0262e-4&0.19&4.3243e-6&0.33&6.6411e-8&0.47 \\
\hline
\end{tabular}
\end{table}

\begin{table}[!h]
\caption{{The  absolute  error   $|e^n|$ at $T=1$
of the method \eqref{fode3}, $\sigma_k=(k+1)\alpha$,  $\alpha=0.5$.}}\label{tb6-1-2}
\centering\footnotesize
\begin{tabular}{|c|c|c|c|c|c|c|c|c|c|c|}
\hline
 $\tau$ & $m=0$ & Order& $m=1$ & Order& $m=2$ & Order & $m=3$ & Order\\
 \hline
$2^{-8}$ &2.3477e-4&    &1.3374e-5&    &1.8122e-7&    &1.0496e-6&    \\
$2^{-9}$ &1.1716e-4&1.00&4.8291e-6&1.46&4.7282e-8&1.93&2.8390e-7&1.88\\
$2^{-10}$&5.8294e-5&1.00&1.7244e-6&1.47&1.2107e-8&1.95&7.4489e-8&1.91\\
$2^{-11}$&2.9247e-5&1.00&6.2033e-7&1.48&3.1214e-9&1.96&1.9521e-8&1.94\\
$2^{-12}$&1.4620e-5&1.00&2.2116e-7&1.48&7.9342e-10&1.97&5.0190e-9&1.95\\
\hline
\end{tabular}
\end{table}

\begin{table}[!h]
\caption{{The  absolute  error   $|e^n|$ at $t=1$
of the method \eqref{fode3}, $\sigma_k=(k+1)\alpha$,  $\alpha=0.1$.}}\label{tb6-3-2}
\centering\footnotesize
\begin{tabular}{|c|c|c|c|c|c|c|c|c|c|c|}
\hline
 $\tau$ & $m=0$ & Order& $m=1$ & Order& $m=3$ & Order & $m=5$ & Order\\
 \hline
$2^{-8}$ &3.9852e-5 &    &3.8881e-6 &    &5.6808e-8 &    &1.4530e-9 &     \\
$2^{-9}$ &1.9883e-5 &1.00&1.8543e-6 &1.07&2.4782e-8 &1.20&5.7487e-10&1.34 \\
$2^{-10}$&9.8918e-6 &1.00&8.8032e-7 &1.07&1.0746e-8 &1.20&2.2518e-10&1.34 \\
$2^{-11}$&4.9626e-6 &1.00&4.2112e-7 &1.07&4.6920e-9 &1.20&8.8819e-11&1.35 \\
$2^{-12}$&2.4806e-6 &1.00&2.0045e-7 &1.07&2.0333e-9 &1.21&3.4752e-11&1.35 \\
\hline
\end{tabular}
\end{table}

From Tables \ref{tb6-1-2} and \ref{tb6-3-2}, we observe that much better accuracy is obtained far from $t=0$. This phenomenon occurs in many time stepping methods for FDEs in literature, which
can be also explained from the average error estimate
$(\tau\sum_{k=0}^n|e^n|^2)^{1/2}\leq C\tau^{\min\{2,m\alpha+0.5\}}$, see Eq. \eqref{apx-fode4-8}.  Clearly, the average error has smaller upper bound than the maximum error estimate $|e^n|\leq C\tau^{\min\{2,\sigma_{m+1}\}},\sigma_m=(m+1)\alpha$, which implies   much better numerical solutions far from $t=0$, see the average errors in Table \ref{tb6-3-3}.

Second,  we find that  a small number of  corrections terms  suffices to have high accuracy in both maximum error and the error at final time.
According to Theorem \ref{thm3-1}, we can get the global second-order accuracy when  $(m+1)\alpha=\sigma_{m+1} \geq2$, i.e.,   $m\geq 3$ for $\alpha=0.5$. Indeed, we observed second-order accuracy for $\alpha=0.5$ and $m\geq 3$ in Table \ref{tb6-1-1}, and the accuracy is also much smaller than $\tau^2$. When $\alpha$ is small, i.e., $\alpha=0.1$, we need at least 19 correction terms to get the global second-order accuracy theoretically. Yet we obtain highly accurate numerical solutions using a small number of correction terms, see the case $m=3,5$ in Tables \ref{tb6-3-1} and \ref{tb6-3-2}, and the accuracy is smaller than $\tau^2$ in Table \ref{tb6-3-2} for $m=3,5$.  Though   accuracy is improved when the number of correction terms increases,  we did not use more than $10$ terms since the starting weights in \eqref{s31-8} may suffer from round-off error when $m>10$ when computed with double precision.
Though we did not observe a sharp second-order
convergence in the presented tables, second-order accuracy can be observed if we use more than 19 correction terms with  quadruple-precision in the computation (results not presented here).

Lastly, we show  the case that $\sigma_k$ is not taken as $k\alpha$. In Tables \ref{tb6-5-1}--\ref{tb6-5-2}, we present the maximum error and the error at $t=1$ of the method \eqref{fode3} for  $\alpha=0.1$  and  $\sigma_k=k\alpha+0.05$ in \eqref{s31-8}.
We  do not exactly match the  singularity of the solution but we still obtain satisfactory accuracy as the number of ``correction terms'' $m$ increases up to $m=7$. The numerical results confirm  the estimate \eqref{Rn2}, see also explanations in Example \ref{eg3-2} in the previous section.  We further illustrate this effect in our next example solving a nonlinear FODE where we don't know the   singularity of the solution.

In summary,  we find that a smaller number of correction  terms can lead to significant improvement in accuracy.  When the regularity of the solution is relatively high (the fractional   order $\alpha$ is large here), we need  only a few correction  terms to achieve a global second-order convergence. When the regularity of the solutions is low (the fractional   order $\alpha$ is small here), we need also a few correction terms as the correction terms bring a small factor $S_{m}^\sigma$ (see Lemma \ref{lem:quad-2}) that leads to high accuracy. Moreover,
the correction terms can be chosen such that the approximation $\mathcal{{A}}_{0,-1}^{\alpha,n,m}$ can be not exact for the singular terms of exact solutions.

\begin{table}[!h]
\caption{{The average  error   $\left(\tau\sum_{n=1}^{n_T}|e^n|^2\right)^{1/2}$
of the method \eqref{fode3}, $\sigma_k=(k+1)\alpha$,  $\alpha=0.1$.}}\label{tb6-3-3}
\centering\footnotesize
\begin{tabular}{|c|c|c|c|c|c|c|c|c|c|c|}
\hline
 $\tau$ & $m=0$ & Order& $m=1$ & Order& $m=3$ & Order & $m=5$ & Order\\
 \hline
$2^{-8}$ &8.0099e-4&    &7.2902e-5&    &8.5073e-7&    &1.9323e-8&     \\
$2^{-9}$ &5.2326e-4&0.61&4.5546e-5&0.67&4.8767e-7&0.80&1.0173e-8&0.92 \\
$2^{-10}$&3.3998e-4&0.61&2.8264e-5&0.68&2.7688e-7&0.81&5.2897e-9&0.93 \\
$2^{-11}$&2.2135e-4&0.62&1.7565e-5&0.69&1.5724e-7&0.82&2.7477e-9&0.95 \\
$2^{-12}$&1.4338e-4&0.62&1.0847e-5&0.69&8.8492e-8&0.82&1.4107e-9&0.96 \\
\hline
\end{tabular}
\end{table}

\begin{table}[!h]
\caption{{The maximum error $\|e\|_{\infty}$ of the method \eqref{fode3}
 with   $\sigma_k=k\alpha+0.05,\alpha=0.1$, ${T=1}$.}}\label{tb6-5-1}
\centering\footnotesize
\begin{tabular}{|c|c|c|c|c|c|c|c|c|c|c|c|c|}
\hline
 $\tau$ & $m=0$ & $m=1$ & $m=2$ &$m=3$ & $m=4$ &$m=5$  & $m=6$   \\
 \hline
$2^{-5}$&1.1149e-2&1.8430e-3&2.9611e-4&6.5830e-5&1.8374e-5&6.0966e-6&2.3116e-6 \\
$2^{-6}$&1.0262e-2&1.6601e-3&2.6347e-4&5.8298e-5&1.6273e-5&5.4119e-6&2.0579e-6 \\
$2^{-7}$&9.4163e-3&1.4904e-3&2.3376e-4&5.1517e-5&1.4390e-5&4.7978e-6&1.8297e-6 \\
$2^{-8}$&8.6257e-3&1.3362e-3&2.0726e-4&4.5530e-5&1.2732e-5&4.2566e-6&1.6280e-6 \\
$2^{-9}$&7.8776e-3&1.1943e-3&1.8331e-4&4.0165e-5&1.1249e-5&3.7718e-6&1.4467e-6 \\
\hline
\end{tabular}
\end{table}

\begin{table}[!h]
\caption{{The  absolute  error   $|e^n|$ at $t=1$
of the method \eqref{fode3}  with   $\sigma_k=k\alpha+0.05,\alpha=0.1$.}}\label{tb6-5-2}
\centering\footnotesize
\begin{tabular}{|c|c|c|c|c|c|c|c|c|c|c|}
\hline
 $\tau$ & $m=0$ & $m=1$ & $m=2$ &$m=3$ & $m=4$ &$m=5$  & $m=6$   \\
 \hline
$2^{-5}$&3.9852e-5&6.9546e-6&1.4421e-6&3.5405e-7&1.0639e-7&3.8423e-8&1.2946e-8  \\
$2^{-6}$&1.9883e-5&3.3948e-6&6.9208e-7&1.6954e-7&5.1231e-8&1.8212e-8&6.2020e-9  \\
$2^{-7}$&9.8918e-6&1.6515e-6&3.3157e-7&8.1111e-8&2.4573e-8&8.6205e-9&2.9998e-9  \\
$2^{-8}$&4.9626e-6&8.1023e-7&1.6045e-7&3.9212e-8&1.1887e-8&4.1314e-9&1.4772e-9  \\
$2^{-9}$&2.4806e-6&3.9600e-7&7.7442e-8&1.8910e-8&5.7307e-9&1.9807e-9&7.2870e-10 \\
\hline
\end{tabular}
\end{table}

\begin{example}\label{s6eg0-2}
Consider the following two-term nonlinear FODE
\begin{equation}\label{sec6:eq0-2}
{}_{C}D^{\alpha_1}_{0,t}Y(t)+{}_{C}D^{\alpha_2}_{0,t}Y(t)=Y(t)(1-Y^2(t))+\cos(t),{\quad}t{\,\in\,}
 {(0,T],T>0}\\
\end{equation}
subject to the initial condition $Y(0)=1/2$,   and $0<\alpha_1,\alpha_2<1$.
\end{example}
Here we consider three cases:
Case I:  $\alpha_1=0.7$, $\alpha_2=0.5$;
Case II:  $\alpha_1=0.2$, $\alpha_2=0.1$;
Case III:  $\alpha_1=0.16$, $\alpha_2=0.09$.


In this example, we compare our method with the two well-known methods. Specifically, we first apply  the L1 method to discretize the Caputo derivative in \eqref{sec6:eq0-2} to derive the corresponding numerical scheme, which is called the L1 method, see  \cite{{JinLaz-etal15},ZhangSunLiao14}. We also transform \eqref{sec6:eq0-2} into its integral form as $Y(t)+D^{-(\alpha_1-\alpha_2)}_{0,t}(Y(t)-Y(0))= Y(0)+D^{-(\alpha_1-\alpha_2)}_{0,t}(Y(t)(1-Y^2(t))+\cos(t))$, and then we apply the trapezoidal rule to the fractional integrals to obtain the corresponding numerical scheme, which is called the trapezoidal rule method, see \cite{DieFF04}.
Since we do not have the exact solution,  we obtain a  reference solution using the trapezoidal
rule method with  time step size $\tau= {T}/{2^{17}}$.  We also
apply the L1 method with  the step size $\tau=T/2^{17}$ to get  a reference solution and obtain almost the same results (not presented here).
In all computations, we use time step size  $\tau=T/2^{8}$  unless otherwise stated.

Obviously we do not know the regularity of   $Y(t)$ of Eq. \eqref{sec6:eq0-2} but we can   estimate the regularity from  linear  equations related to Eq. \eqref{sec6:eq0-2}. Here we first investigate  the regularity of the  following equation
\begin{equation}\label{sec6:eq0-2=linearized}
{}_{C}D^{\alpha_1}_{0,t}X(t)+{}_{C}D^{\alpha_2}_{0,t}X(t)= X(t) +\cos(t),{\quad}t{\,\in\,}
 (0,T].
\end{equation}
The solution  $X(t)$ can be represented by a generalized Mittag--Leffler function
\cite[Chapter 5.4]{Pod-B99} and   is of the form
 $\sum_{j,k=0}^\infty a_{j,k} t^{(\alpha_1-\alpha_2)j+(k+1)\alpha_1-1}$ if  $\alpha_1\geq \alpha_2$.
Meanwhile, we choose
the  correction terms  that make $S_m^{\sigma} = \Pi_{i=1}^m\abs{\sigma-\sigma_k}$ in \eqref{Rn2}   as small as possible if $Y(t)$ contains $t^{\sigma}$ when $\sigma$ is relatively small.
Empirically, we  choose   $\sigma_k<1$ that guarantees the decrease of     $S_m^{\sigma}$  with $m$ for $\sigma<1$.  Consequently, for all cases, we choose $\sigma_k$ as $\sigma_{k}=\alpha_1+(\alpha_1-\alpha_2)(k-1),\,k\geq 1$.

We show in Fig. \ref{fig5-2} numerical solutions and pointwise errors of both our method \eqref{fode3}   with/without  correction terms and the L1 method for Cases I and II. Adding correction terms  greatly improves accuracy, although these ``correction terms'' may not match the singularity of the analytical solutions.

We further test  choices of  $\sigma_k$ in Case III where fractional orders are small that  usually  lead to low regularity of   $Y(t)$. In addition to the choice of  $\sigma_{k}=\alpha_1+(\alpha_1-\alpha_2)(k-1)$,  we also choose $\sigma_k=0.1k,0.15k,0.2k$ in the computation.  All these choices of $\sigma_k$ yield smaller $S_m^{\sigma} = \Pi_{i=1}^m\abs{\sigma-\sigma_k}$ as $m$ increases when $\sigma_m\leq 1$ and $\sigma\leq 1$. In this case,  we choose $m\leq 5$ in the computation that leads to $\sigma_m\leq 1$.
The pointwise errors are shown in Fig. \ref{fig5-3}. We observe that numerical solutions  with higher accuracy are obtained by properly choosing  correction terms, which  confirms Lemma \ref{lem:quad-2}.


In conclusion,  we again observed that a smaller number of correction terms leads to  more accurate numerical solutions than  those from formulas without correction terms.   We also discussed how to choose $\sigma_k$ based on some preliminary singularity analysis and
error estimates in Lemma  \ref{lem:quad-2}.

\begin{figure}[!t]
\begin{center}
\begin{minipage}{0.4\textwidth}\centering
\epsfig{figure=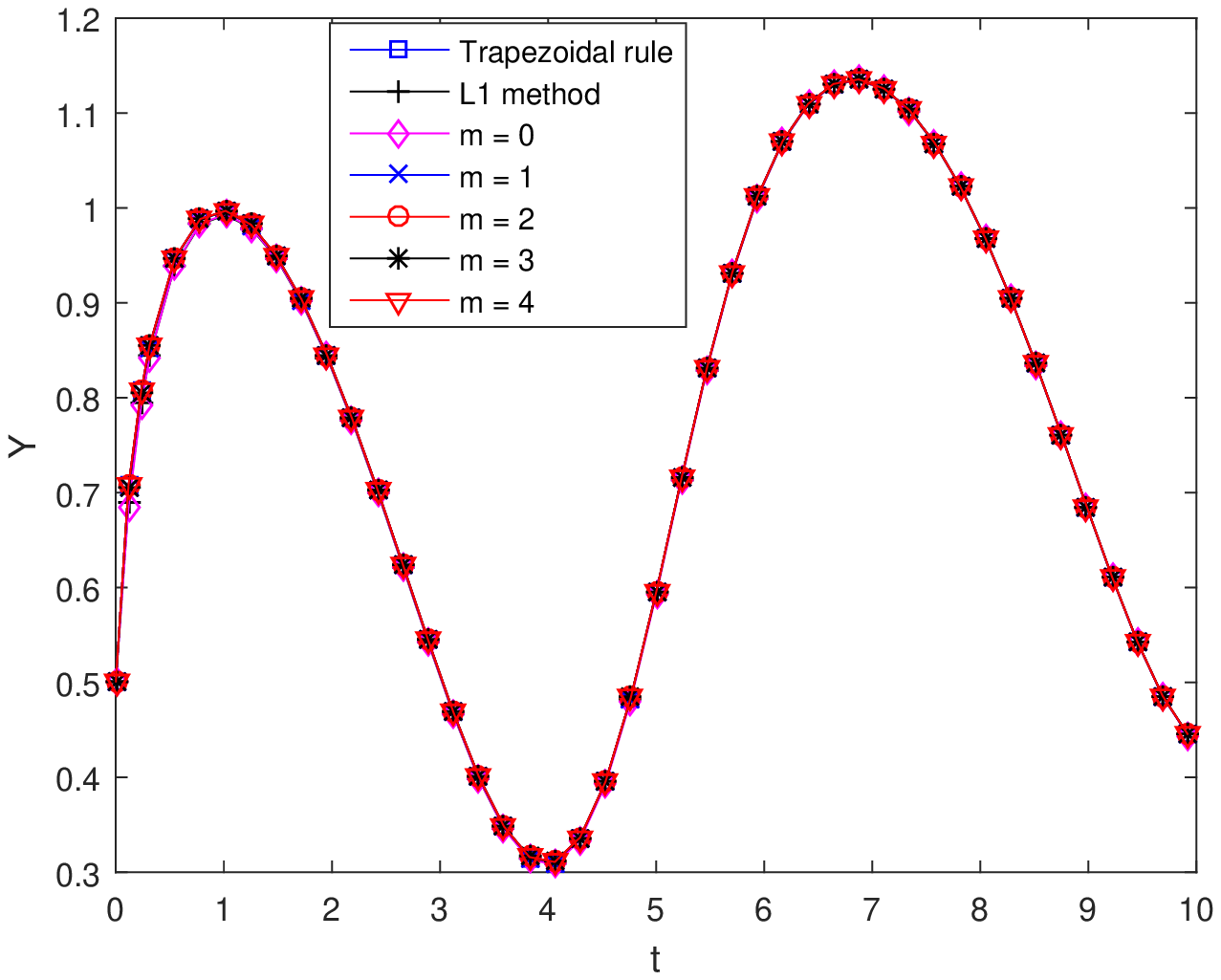,width=5cm}  \par{(a1)   Case I.}
\end{minipage}
\begin{minipage}{0.4\textwidth}\centering
\epsfig{figure=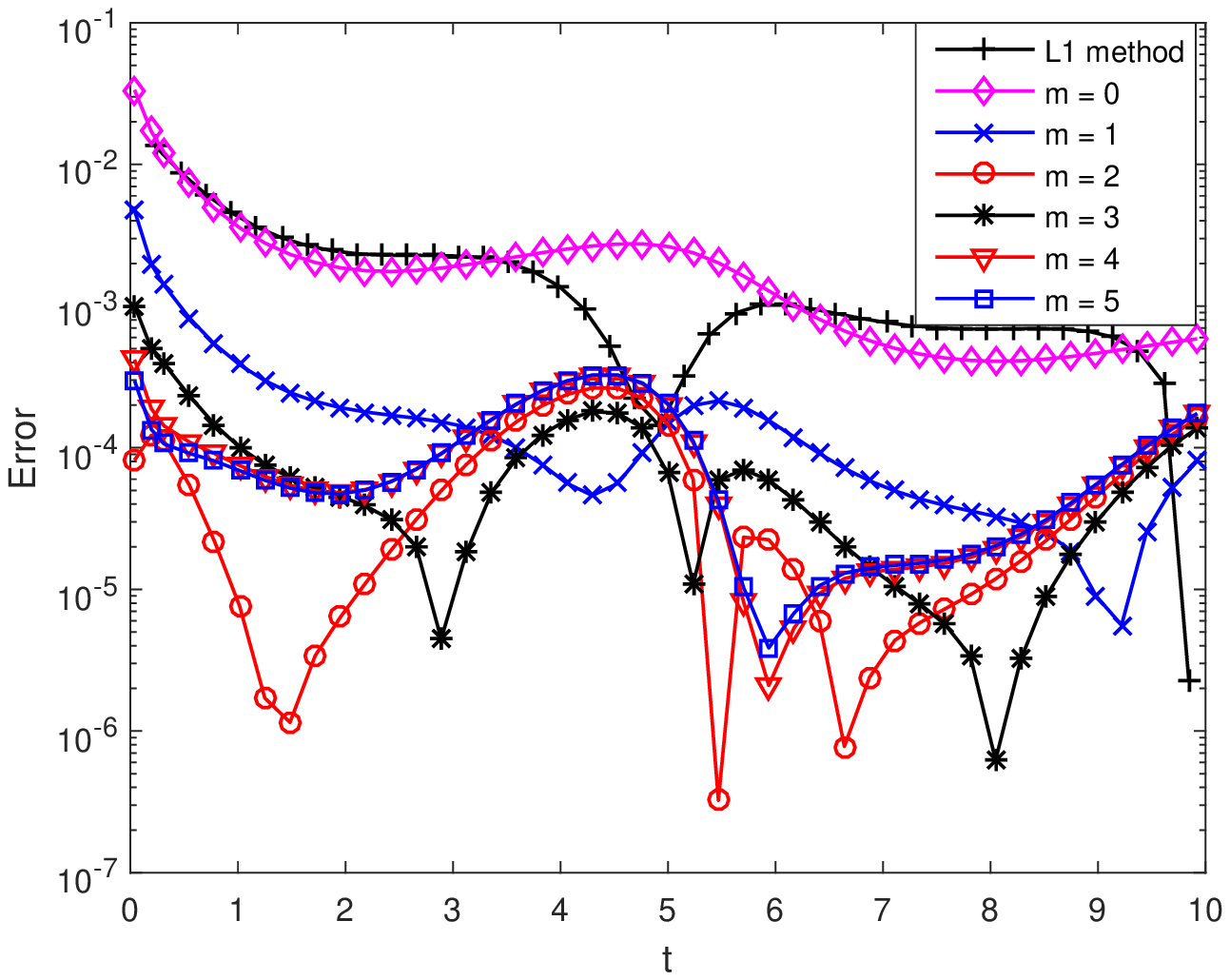,width=5cm}   \par{(a2) Case I.}
\end{minipage}
\begin{minipage}{0.4\textwidth}\centering
\epsfig{figure=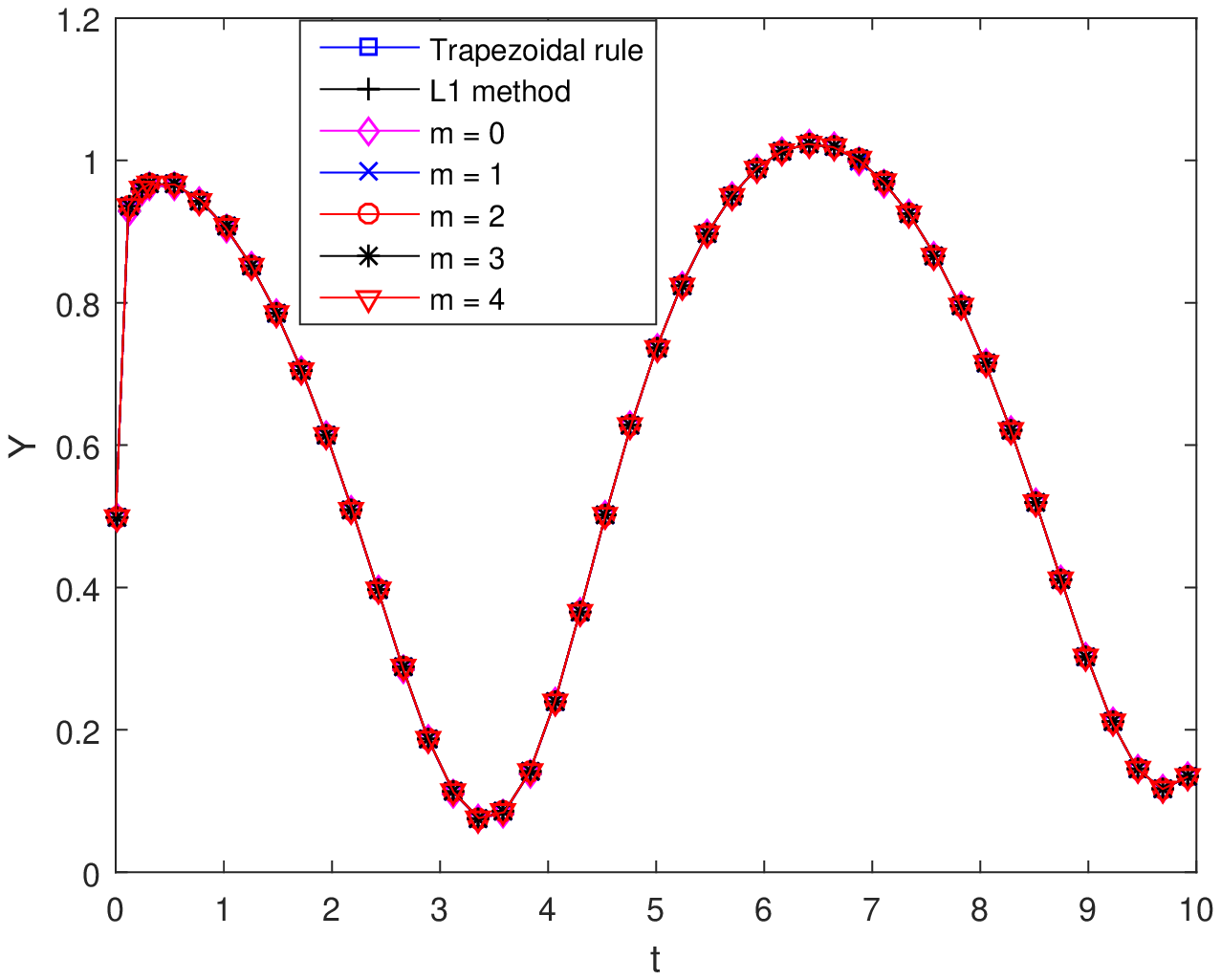,width=5cm}  \par{(b1)   Case II.}
\end{minipage}
\begin{minipage}{0.4\textwidth}\centering
\epsfig{figure=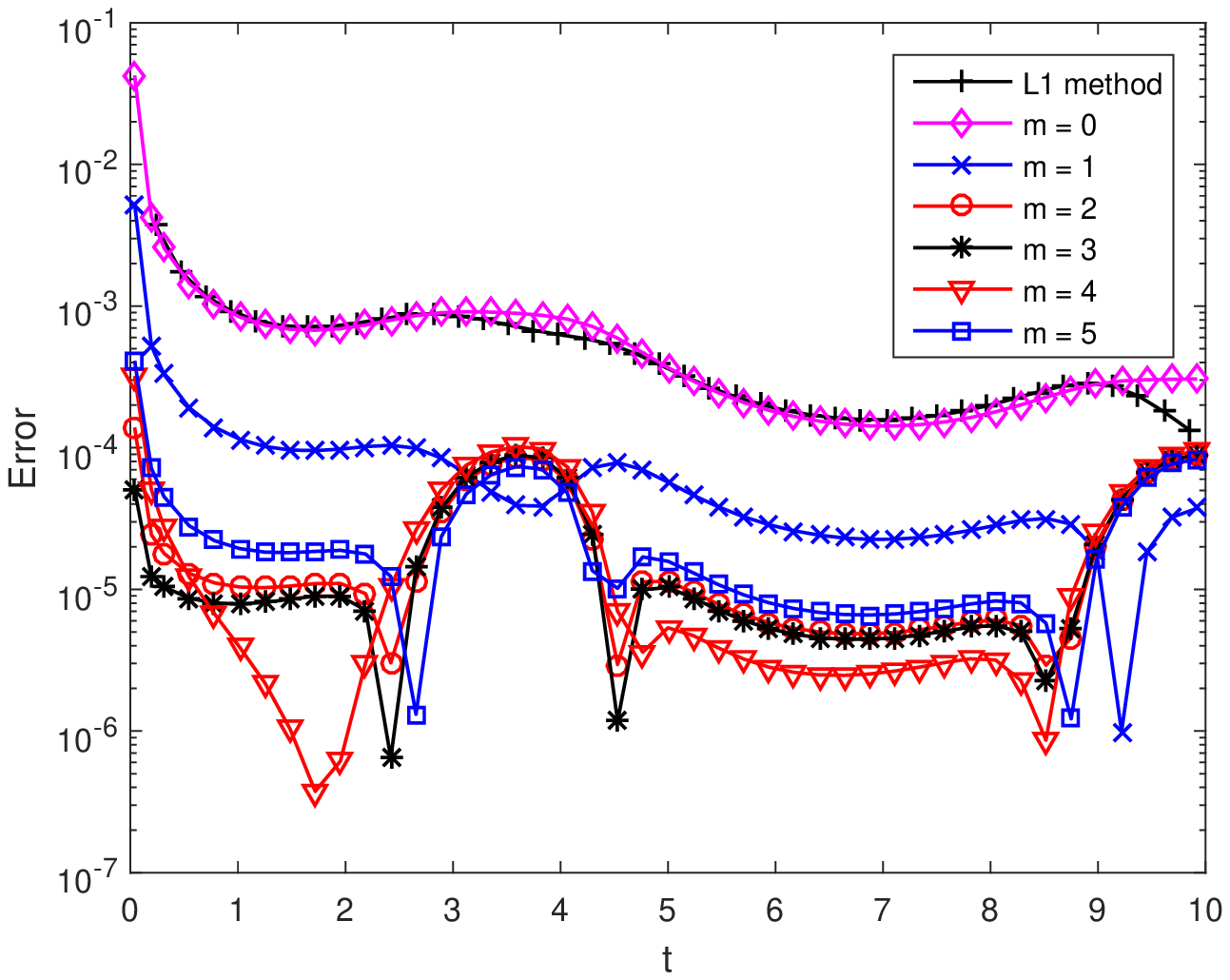,width=5cm}   \par{(b2) Case II.}
\end{minipage}
\end{center}
\caption{Numerical solutions and pointwise errors for Cases I and II, $\tau=T/2^{8}$, and $T=10$.\label{fig5-2}}
\end{figure}

\begin{figure}[!t]
\begin{center}
\begin{minipage}{0.4\textwidth}\centering
\epsfig{figure=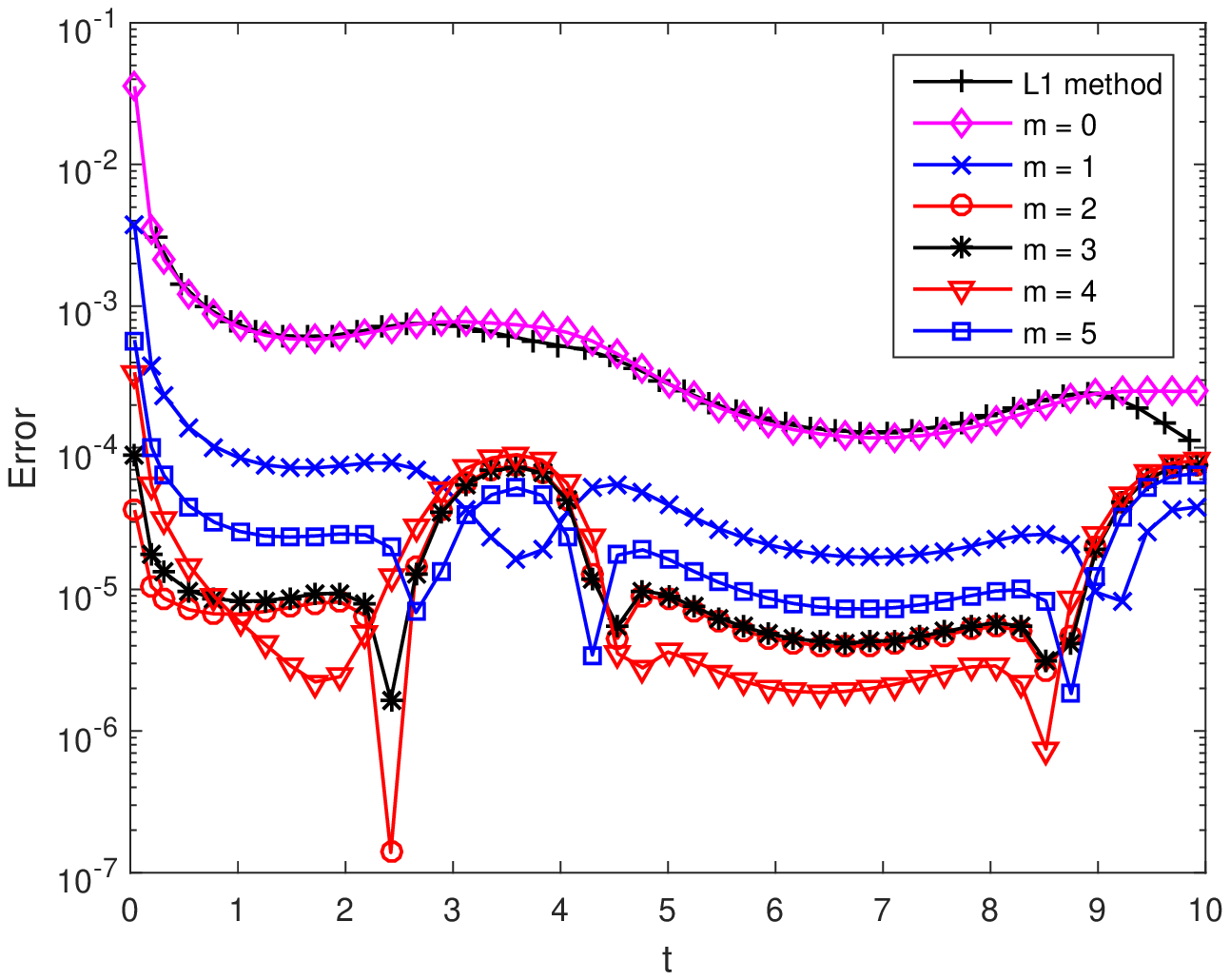,width=5cm}   \par{(a)  $\sigma_{k}=\alpha_1+\alpha_2(k-1)$.}
\end{minipage}
\begin{minipage}{0.4\textwidth}\centering
\epsfig{figure=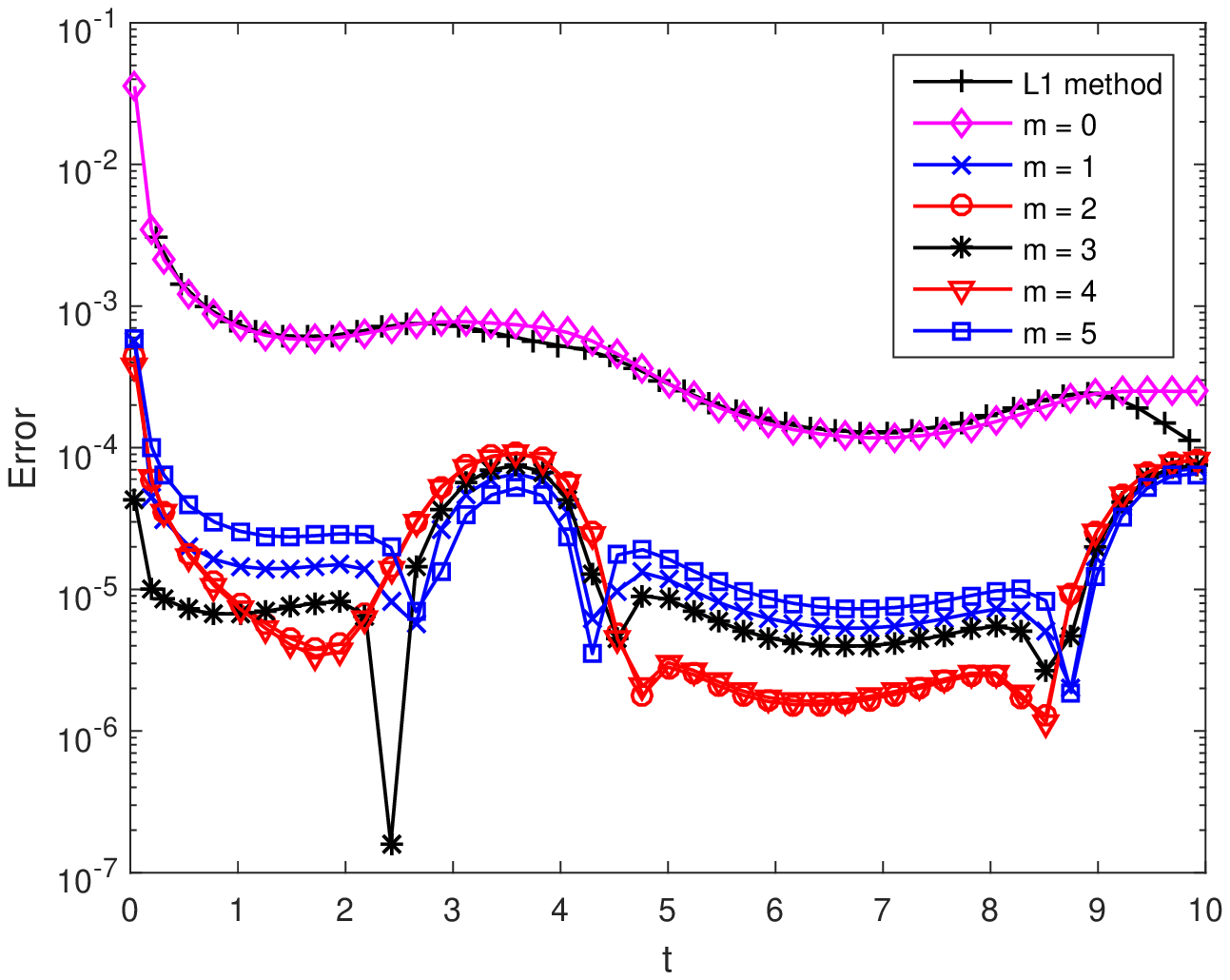,width=5cm}  \par{(b)   $\sigma_{k}=0.1k$.}
\end{minipage}
\begin{minipage}{0.4\textwidth}\centering
\epsfig{figure=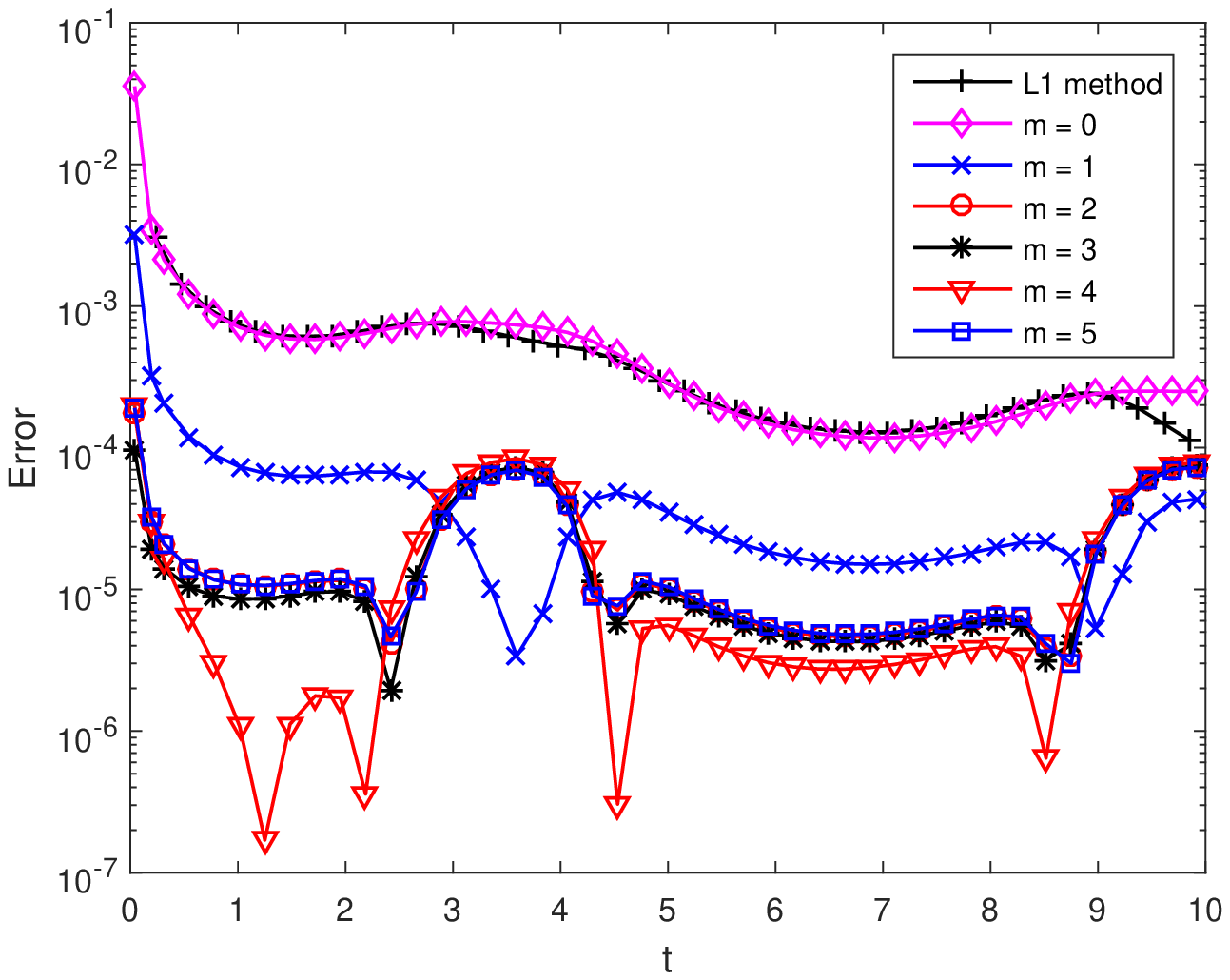,width=5cm}   \par{(c) $\sigma_{k}=0.15k$.}
\end{minipage}
\begin{minipage}{0.4\textwidth}\centering
\epsfig{figure=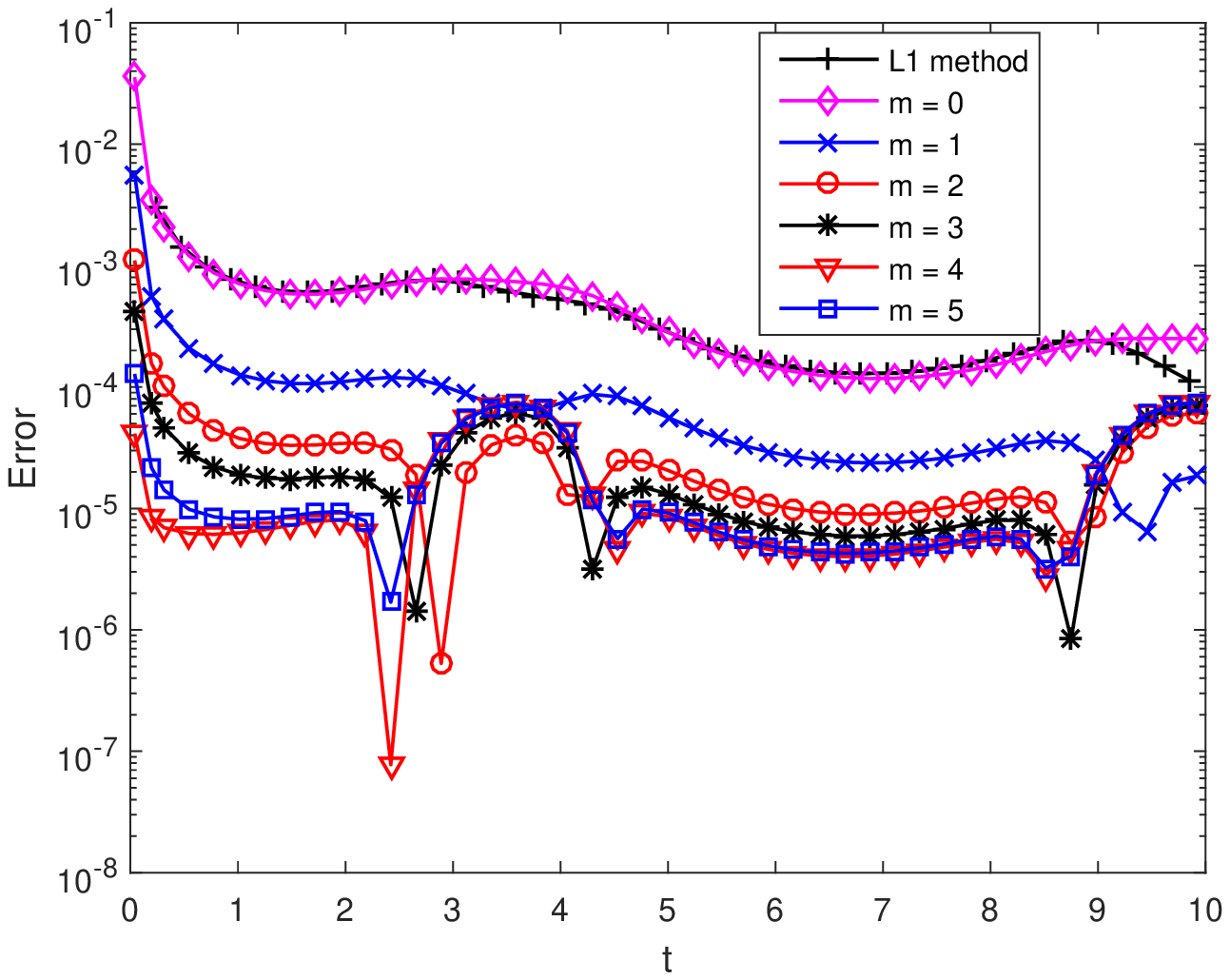,width=5cm}   \par{(d) $\sigma_{k}=0.2k$.}
\end{minipage}
\end{center}
\caption{Numerical solutions Pointwise errors  for Case III,  $\tau=T/2^{8}$, and $T=10$.\label{fig5-3}}
\end{figure}

\begin{example}\label{s6eg1}
Consider the following fractional diffusion-wave equation
\begin{equation}\label{sec6:eq1}
\left\{\begin{aligned}
&\px[t]^2U+{}_{C}D^{1+\alpha}_{0,t}U=\px^2U+f(x,t),{\quad}(x,t){\,\in\,}\Omega{\times}(0,1],\\
&U(x,0)=\sin(2\pi x),\quad\px[t]U(x,0)=\sin(2\pi x),{\quad}x{\,\in\,}\bar{\Omega},\\
&U(x,t)=0,{\quad}(x,t)\in\partial\Omega\times(0,1],
\end{aligned}\right.
\end{equation}
where $\Omega=(-1,1)$ and $0<\alpha<1$.
\end{example}
In this example, we consider two cases:
\begin{itemize}
  \item Case I (Smooth solutions): Choose a suitable source term $f(x,t)$
  such that  the exact solution  to  \eqref{sec6:eq1} is
$U(x,t)=(t^4+t^3+t^{2}+t+1)\sin(2\pi x).$
  \item Case II (Smooth inputs): The source term is $f=\exp(-t)\sin(\pi x)$,
  the initial conditions in
  \eqref{sec6:eq1} are  replaced by $U(x,0)=\px[t]U(x,0)=0$, and $\alpha=1/2$.
\end{itemize}

We use the LGSEM \eqref{scheme2-1}--\eqref{scheme2-3}  to solve \eqref{sec6:eq1}, where
the domain $\Omega$   is divided into three subdomains, i.e., $\Omega=(-1,-1/2]\cup[-1/2,1/2]\cup[1/2,1)$,
and $N=(24,32,24)$.

In Table \ref{tb1-1}, we present the  $L^2$ errors at $t=1$ for Case I (smooth solution
with $\sigma_k=k$ in \eqref{assu:fpde}). We choose $m_1=m_2=0$ and $m_3=0,1,2$
in the computation and observe that   second-order accuracy
is observed for $m_3=1,2$ and $\alpha=0.2,0.5,0.8,0.9$,
which is in line with or even better than the theoretical analysis,
see also Remark \ref{remark6}.
\textcolor[rgb]{0.00,0.00,0.00}{However,} for $m_3=0$,   we do not obtain
second-order accuracy, especially when $\alpha$ is close to 1; see  also the related
numerical results in \cite{WangVong14b,Zeng14},  {where second-order accuracy
was lost without correction terms}.

For Case II,
we do not have the explicit analytical solutions.
It is known, see e.g.   \cite[p. 183]{Diethelm-B10} and
\cite{JiangLiu-etal12b,JiangLiu-etal12,Luchko11},
that the analytical solution of \eqref{sec6:eq1} satisfies $U(x,t)=c_1(x)t^{\sigma_1}+c_2(x)t^{\sigma_2}+c_3(x)t^{\sigma_3}+....$,
where $\sigma_k=(3+k)\alpha,\,k=1,2,\ldots$ when $\alpha=1/2$.
For simplicity, we choose $m_1=m_2=m_3=m$ in the computation.
The benchmark solutions are obtained with smaller time step size $\tau=1/2048$.
In Table \ref{tb1-2}, we present the   $L^2$ errors at $t=1$ and observe second-order accuracy. Moreover,
we find that  more accurate numerical solutions are obtained when $m$ increases, which is in line with our theoretical predictions.

\begin{table}[!h]
\caption{{The   $L^{2}$ errors at $t=1$ for Case I, $N=(24,32,24)$, $m_1=m_2=0$.}}\label{tb1-1}
\centering\footnotesize
\begin{tabular}{|c|c|c|c|c|c|c|c|c|c|}
\hline
$m_3$& $\tau$ & $\alpha=0.2$ & Order& $\alpha=0.5$ & Order& $\alpha=0.8$ & Order & $\alpha=0.9$ & Order\\
 \hline
 &$2^{-5}$&2.6657e-4&    &6.1569e-4&    &1.7772e-3&    &2.4260e-3&    \\
 &$2^{-6}$&6.7681e-5&1.97&1.8086e-4&1.76&6.9840e-4&1.34&1.0494e-3&1.20\\
0&$2^{-7}$&1.7203e-5&1.97&5.4727e-5&1.72&2.8484e-4&1.29&4.6837e-4&1.16\\
 &$2^{-8}$&4.3805e-6&1.97&1.7044e-5&1.68&1.1915e-4&1.25&2.1310e-4&1.13\\
 &$2^{-9}$&1.1178e-6&1.97&5.4503e-6&1.64&5.0648e-5&1.23&9.8050e-5&1.11\\
 \hline
 &$2^{-5}$&1.6225e-4&    &2.5548e-4&    &4.2711e-4&    &5.1769e-4&    \\
 &$2^{-6}$&4.2531e-5&1.93&6.4545e-5&1.98&1.0434e-4&2.03&1.2298e-4&2.07\\
1&$2^{-7}$&1.0862e-5&1.96&1.6273e-5&1.98&2.6149e-5&1.99&3.0352e-5&2.01\\
 &$2^{-8}$&2.7426e-6&1.98&4.0901e-6&1.99&6.6081e-6&1.98&7.6092e-6&1.99\\
 &$2^{-9}$&6.8889e-7&1.99&1.0258e-6&1.99&1.6734e-6&1.98&1.9197e-6&1.98\\
 \hline
 &$2^{-5}$&2.5474e-4&    &4.6474e-4&    &8.2000e-4&    &9.9172e-4&    \\
 &$2^{-6}$&6.2411e-5&2.02&1.1320e-4&2.03&1.9873e-4&2.04&2.4016e-4&2.04\\
2&$2^{-7}$&1.5464e-5&2.01&2.7927e-5&2.01&4.8842e-5&2.02&5.8955e-5&2.02\\
 &$2^{-8}$&3.8497e-6&2.00&6.9349e-6&2.00&1.2104e-5&2.01&1.4598e-5&2.01\\
 &$2^{-9}$&9.6047e-7&2.00&1.7279e-6&2.00&3.0125e-6&2.00&3.6316e-6&2.00\\
\hline
\end{tabular}
\end{table}

\begin{table}[!h]
\caption{{The   $L^{2}$ errors at $t=1$ for Case II, $N=(24,32,24)$, $\alpha=1/2$.}}\label{tb1-2}
\centering\footnotesize
\begin{tabular}{|c|c|c|c|c|c|c|c|c|}
\hline
 $\tau$ & $m=0$ & Order& $m=1$ & Order& $m=2$ & Order & $m=3$ & Order\\
 \hline
$2^{-5}$&2.6290e-4&    &1.7419e-4&    &3.0941e-5&    &5.0036e-5&    \\
$2^{-6}$&8.8199e-5&1.57&5.6954e-5&1.61&6.1603e-6&2.32&9.3685e-6&2.41\\
$2^{-7}$&3.0182e-5&1.54&1.9353e-5&1.55&1.3292e-6&2.21&1.8037e-6&2.37\\
$2^{-8}$&1.0260e-5&1.55&6.5824e-6&1.55&3.0150e-7&2.14&3.6715e-7&2.29\\
$2^{-9}$&3.3070e-6&1.63&2.1279e-6&1.62&6.8463e-8&2.13&7.7103e-8&2.25\\
\hline
\end{tabular}
\end{table}

\section{Proofs}\label{sec:proof}


\subsection{Proof of Lemma \ref{lem:quad-accuracy} }A special case of Lemma \ref{lem:quad-accuracy} with $q=0,1$ and  $\sigma$
being an integer can be found in \cite{Sousa12}. We present the proof here for completeness and for all $q$.
\begin{proof}
For $q=0$ and $U(t)=t^{\sigma}$, we have
\begin{equation}\label{SGL-2}\begin{aligned}
\tau^{\alpha-\sigma}\mathcal{B}^{\alpha,n}_{0}U
=&\sum_{k=0}^{n}\omega^{(\alpha)}_{k}(n-k)^{\sigma}
=\sum_{k=0}^{n}\omega^{(\alpha)}_{n-k}k^{\sigma}.
\end{aligned}\end{equation}

By  analyzing the proof of Lemma 3.5 in \cite{Lub86} again, we can obtain
\begin{equation}\label{SGL-3}\begin{aligned}
\sum_{k=0}^{n}\omega^{(\alpha)}_{n-k}k^{\sigma}=\frac{\Gamma(\sigma+1)}{\Gamma(\sigma+1-\alpha)}n^{\sigma-\alpha}
-\frac{\alpha}{2}\frac{\Gamma(\sigma+1)}{\Gamma(\sigma-\alpha)}n^{\sigma-\alpha-1}
+O(n^{\sigma-\alpha-2}),
\end{aligned}\end{equation}
which yields  \eqref{SGL} for $q=0$.
Now we prove \eqref{SGL} for $q\neq 0$.  From \eqref{SGL-3}, we have
\begin{equation}\label{SGL-4}\begin{aligned}
&\tau^{\alpha-\sigma}\mathcal{B}^{\alpha,n}_{q}U
=\sum_{k=0}^{n+q}\omega^{(\alpha)}_{k}U(t_{n-k+q})
=\sum_{k=0}^{n+q}\omega^{(\alpha)}_{k}(n-k+q)^{\sigma}\\
=&\frac{\Gamma(\sigma+1)}{\Gamma(\sigma+1-\alpha)}(n+q)^{\sigma-\alpha}
-\frac{\alpha}{2}\frac{\Gamma(\sigma+1)}{\Gamma(\sigma-\alpha)}(n+q)^{\sigma-\alpha-1}
+O((n+q)^{\sigma-\alpha-2}).
\end{aligned}\end{equation}
By the fact that $\left(1+\sfrac{q}{n}\right)^{\sigma}=1+{\sigma}qn^{-1}+O(n^{-2}),n\geq|q|$,
we have
\begin{eqnarray}\label{SGL-5}
\sum_{k=0}^{n+q}\omega^{(\alpha)}_{k}(n-k+q)^{\sigma}
&=&\frac{\Gamma(\sigma+1)}{\Gamma(\sigma+1-\alpha)}
\left(n^{\sigma-\alpha}+{(\sigma-\alpha)}qn^{\sigma-\alpha-1}\right) \notag \\
&& -\frac{\alpha}{2}\frac{\Gamma(\sigma+1)}{\Gamma(\sigma-\alpha)}n^{\sigma-\alpha-1}+O(n^{\sigma-\alpha-2}) \\
&=&\frac{\Gamma(\sigma+1)}{\Gamma(\sigma+1-\alpha)}n^{\sigma-\alpha}
+\left(q-\frac{\alpha}{2}\right)\frac{\Gamma(\sigma+1)}{\Gamma(\sigma-\alpha)}n^{\sigma-\alpha-1}
+O(n^{\sigma-\alpha-2}). \notag
 \end{eqnarray}
From \eqref{SGL-3} and \eqref{SGL-5}, we reach \eqref{SGL}, which completes the proof.
\end{proof}

\subsection{Proof of Lemma \ref{lem:quad-2}}
To prove Lemma \ref{lem:quad-2}, we need the following lemma.
\begin{lemma}\label{lm4.2}
Let $m$ be a positive integer and  $w_{n,r}^{(\alpha)}\,(r=1,2,...,m)$  be defined by \eqref{s31-8}.
Suppose that $\{\sigma_r\}$ are a sequence of strictly increasing  positive numbers.
Then we have
\begin{equation}\label{s4:eq-1}\begin{aligned}
w_{n,r}^{(\alpha)}
= O(n^{\sigma_1-2-\alpha})+O(n^{\sigma_2-2-\alpha})+\cdots+O(n^{\sigma_m-2-\alpha}).
\end{aligned}\end{equation}
\end{lemma}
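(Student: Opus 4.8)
The plan is to solve explicitly (in an asymptotic sense) the linear system \eqref{s31-8} that defines the starting weights $w_{n,r}^{(\alpha)}$. Write the system as $V \mathbf{w} = \mathbf{b}$, where $V = (k^{\sigma_r})_{1\le r,k\le m}$ is the (transpose of the) exponential Vandermonde matrix with nodes $1,2,\dots,m$ and exponents $\sigma_1,\dots,\sigma_m$, and the right-hand side has entries
\[
b_r = \frac{\Gamma(\sigma_r+1)}{\Gamma(\sigma_r+1-\alpha)} n^{\sigma_r-\alpha} - \sum_{k=0}^{n} g^{(\alpha)}_{n-k} k^{\sigma_r}.
\]
The crucial observation is that $\sum_{k=0}^n g^{(\alpha)}_{n-k} k^{\sigma_r}$ is exactly $\tau^{\alpha-\sigma_r}\,\mathcal{A}_{0,-1}^{\alpha,n} U$ evaluated at $U(t)=t^{\sigma_r}$ (up to the power of $\tau$), so by the second-order WSGL error estimate \eqref{s31-1} with $(p,q)=(0,-1)$, applied to $U(t)=t^{\sigma_r}$, we get
\[
b_r = \tau^{\alpha-\sigma_r}\Bigl(\bigl[{}_{RL}D^{\alpha}_{0,t}U\bigr]_{t=t_n} - \mathcal{A}_{0,-1}^{\alpha,n}U\Bigr) = -\,\tau^{\alpha-\sigma_r}\cdot\tau^2 R^{n,\alpha,\sigma_r},
\]
and hence $|b_r| \le C\, n^{\sigma_r - 2 - \alpha}$ by \eqref{Rn}. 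In other words, each component of the right-hand side is $O(n^{\sigma_r-2-\alpha})$ — this is the heart of the argument and is essentially a direct consequence of Lemma \ref{lem:quad-accuracy}.

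Next I would invert $V$ by Cramer's rule. Since $\mathbf{w} = V^{-1}\mathbf{b}$, the $r$-th component is a fixed rational combination (with coefficients depending only on $m$, the nodes $1,\dots,m$, and the exponents $\sigma_1,\dots,\sigma_m$, but not on $n$) of the entries $b_1,\dots,b_m$. Because every $b_j$ is $O(n^{\sigma_j-2-\alpha})$, linearity immediately yields
\[
w_{n,r}^{(\alpha)} = \sum_{j=1}^m (V^{-1})_{rj}\, b_j = \sum_{j=1}^m O\bigl(n^{\sigma_j - 2 - \alpha}\bigr) = O(n^{\sigma_1-2-\alpha}) + \cdots + O(n^{\sigma_m-2-\alpha}),
\]
which is exactly \eqref{s4:eq-1}. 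One only needs that $V$ is invertible; this holds because the nodes $1,2,\dots,m$ are distinct positive reals and the exponents $\sigma_r$ are strictly increasing, so the generalized Vandermonde determinant is nonzero (a standard fact — the function $x\mapsto \det(x_i^{\sigma_j})$ does not vanish for distinct positive $x_i$ when the $\sigma_j$ are distinct, by the theory of Chebyshev/Haar systems, since $\{x^{\sigma_1},\dots,x^{\sigma_m}\}$ is an extended complete Chebyshev system on $(0,\infty)$).

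The main obstacle — really the only nontrivial point — is justifying that the constants $(V^{-1})_{rj}$ are genuinely independent of $n$ and that the estimate $|b_r|\le C n^{\sigma_r-2-\alpha}$ holds uniformly; both are handled by the remarks already in place (the constant $C$ in Lemma \ref{lem:quad-accuracy} is independent of $n$ and $\tau$, and $V$ does not depend on $n$ at all). A minor technical care point: the estimate \eqref{Rn} in Lemma \ref{lem:quad-accuracy} for $U(t)=t^{\sigma_r}$ requires $n\ge |q| = 1$, which is exactly the regime in which the starting weights are used, so there is no issue. I would write the proof in three short steps: (1) identify $b_r$ via \eqref{balf}, \eqref{g-k} and Lemma \ref{lem:quad-accuracy} and bound it; (2) note $V$ is invertible and $n$-independent; (3) conclude by Cramer's rule / linearity.
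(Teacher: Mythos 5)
Your proposal is correct and follows essentially the same route as the paper: the paper likewise applies \eqref{s31-1} with $U(t)=t^{\sigma_r}$ and $(p,q)=(0,-1)$ to show the right-hand side of \eqref{s31-8} is $O(n^{\sigma_r-2-\alpha})$, and then concludes from the $n$-independent exponential Vandermonde system (you merely make explicit the invertibility/Cramer's-rule step that the paper leaves implicit). The only blemish is an immaterial sign slip: from \eqref{s31-1} one gets $b_r=+\tau^{\alpha-\sigma_r}\tau^{2}R^{n,\alpha,\sigma_r}$, not the negative of it, which of course does not affect the bound.
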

\begin{proof}
Letting $U(t)=t^{\sigma_r}$ and $(p,q)=(0,-1)$ in \eqref{s31-1}, we derive
\begin{equation}\label{s4:eq-2}\begin{aligned}
\frac{\Gamma(\sigma_r+1)}{\Gamma(\sigma_r+1-\alpha)}n^{\sigma_r-\alpha}
=\sum_{k=0}^{n}g^{(\alpha)}_{k}(n-k)^{\sigma_r}
+O({n}^{\sigma_r-2-\alpha}).
\end{aligned}\end{equation}
Combining \eqref{s4:eq-2} and \eqref{s31-8} yields a linear system
\begin{equation}\label{s4:eq-4}\begin{aligned}
w_{n,1}^{(\alpha)}&+2^{\sigma_r}w_{n,2}^{(\alpha)}+...+m^{\sigma_r}w_{n,m}^{(\alpha)}
=O(n^{\sigma_r-2-\alpha}),{\quad}r=1,2,...,m,
\end{aligned}\end{equation}
which leads to \eqref{s4:eq-1}. The proof is completed.
\end{proof}

Proof of Lemma \ref{lem:quad-2}.
\begin{proof}
We have
$\left[{}_{RL}D_{0,t}^{\alpha}U(t)\right]_{t=t_n}=\mathcal{A}_{p,q}^{\alpha,n,m}U
+\tau^{\sigma-\alpha}H^{n,m,\alpha}(\sigma)$
from \eqref{SGL} and \eqref{s31-1},
%
and $H^{n,m,\alpha}(\sigma)$  satisfies (see Lemma \ref{lm4.2})
\begin{equation*} \begin{aligned}
H^{n,m,\alpha}(\sigma)=&\frac{\Gamma(\sigma+1)}{\Gamma(\sigma+1-\alpha)}n^{\sigma-\alpha}
-\sum_{k=0}^{n}g^{(\alpha)}_{k}(n-k)^{\sigma} -\sum_{k=1}^mw_{n,k}^{(\alpha)}k^{\sigma}\\
=&Gn^{\sigma-\alpha-2}-\sum_{k=1}^mG_kn^{\sigma_k-\alpha-2}
+{H}_2^{n,m,\alpha}(\sigma)={H}_1^{n,m,\alpha}(\sigma)+{H}_2^{n,m,\alpha}(\sigma),
\end{aligned}\end{equation*}
where $G$ is independent of $n,\tau$, and $\sigma_k(1\leq k \leq m)$,  $G_k$ is independent of $n$ and $\tau$,   and ${H}_2^{n,m,\alpha}(\sigma)=O({n}^{\sigma-3-\alpha})
+O({n}^{\sigma_1-3-\alpha})+\cdots + O({n}^{\sigma_m-3-\alpha})$. From  Eqs. (3.1) and (3.13) in \cite{Lub86}, we can readily obtain both $G$ and $G_k$ are analytical functions with respect to $\sigma$.
Hence, $|\px[\sigma]^rG_k|$ and $|\px[\sigma]^rG|$ are bounded for a given $r$.
Next, we derive a bound for
${H}_1^{n,m,\alpha}(\sigma)$. Since $H^{n,m,\alpha}(\sigma_k)=0$, it implies
${H}_1^{n,m,\alpha}(\sigma_k)=0$, $1\leq k \leq m$. It is known that ${H}_1^{n,m,\alpha}(\sigma)$ is infinitely smooth with respect to $\sigma$ and ${H}_1^{n,m,\alpha}(\sigma_k)=0$, there exists an $\xi(\sigma)\in(\min\{\sigma,\sigma_1,...,\sigma_m\},\max\{\sigma,\sigma_1,...,\sigma_m\})$,
such that
$${H}_1^{n,m,\alpha}(\sigma)=\frac{\px[\sigma]^{m}{H}_1^{n,m,\alpha}(\xi(\sigma))}{m!}
\prod_{k=1}^m(\sigma-\sigma_k).$$
From the boundedness of $|\px[\sigma]^r G_k|$, $|\px[\sigma]^r G|$ , and the following relation
\begin{equation*} \begin{aligned}
|\px[\sigma]^{m} (Gn^{\sigma-\alpha-2})|
=&\bigg|\sum_{k=0}^m\binom{k}{m}(\px[\sigma]^{m-k}G)\, n^{\sigma-\alpha-2}\log^{k}(n)\bigg|
\leq Cn^{\sigma-\alpha-2}\log^{m}(n),
\end{aligned}\end{equation*}
we have
$|\px[\sigma]^{m}{H}_1^{n,m,\alpha}(\sigma)|
\leq C\left(n^{\sigma-\alpha-2}\log^{m}(n)
+n^{\sigma_{\max}-\alpha-2}\right)$, which leads to
$$|{H}_1^{n,m,\alpha}(\sigma)|\leq CS_m^{\sigma}\left(n^{\sigma-\alpha-2}\log^{m}(n)
+n^{\sigma_{\max}-\alpha-2}\right).$$

With the same reasoning, we can derive that ${H}_2^{n,m,\alpha,\sigma}$ also satisfies
\begin{equation*} \begin{aligned}
|{H}_2^{n,m,\alpha}(\sigma)|\leq& \widetilde{C}\left[S_m^{\sigma}\left(n^{\sigma-\alpha-3}\log^{m}(n)
+n^{\sigma_{\max}-\alpha-3}\right)+ {n}^{\sigma_{\max}-4-\alpha}\right]\\
\leq& C\left[S_m^{\sigma}\left(n^{\sigma-\alpha-3}\log^{m}(n)
+n^{\sigma_{\max}-\alpha-3}\right)+ {n}^{\sigma_{\max}-3-d-\alpha}\right],
\end{aligned}\end{equation*}
where $d$ is a positive integer and $C$ is independent of $n$.
From the estimates of ${H}_1^{n,m,\alpha}(\sigma)$ and ${H}_2^{n,m,\alpha}(\sigma)$, we obtain the desired result. The proof is completed.
\end{proof}

\subsection{Proofs of Theorems \ref{thm3-2}--\ref{theorem3-3}}
We first introduce a lemma.
\begin{lemma}\label{lm5-2}
Suppose that $a(z)=\sum_{k=0}^{\infty}a_kz^k$, $b(z)=\sum_{k=0}^{\infty}b_kz^k$, $\alpha$ is real,
 and $0\leq \beta\leq 1$.
If $\abs{a_k}\leq Ck^{\alpha}$ and $a(z)\left(1+\frac{\beta}{2}-\frac{\beta}{2}z\right)^{-1}=b(z)$,
then there exists a positive constant  $C$ independent of $k$, such that $|b_k|\leq C k^{\alpha}$.
\end{lemma}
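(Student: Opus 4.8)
The plan is to read off the coefficients $b_k$ explicitly from the power-series identity and then estimate the resulting sum. Rewrite the hypothesis as $b(z) = a(z)\bigl(1+\tfrac{\beta}{2}-\tfrac{\beta}{2}z\bigr)^{-1}$. Since $0\le\beta\le1$, the constant term $1+\tfrac\beta2$ of the factor lies in $[1,\tfrac32]$, so the factor is invertible as a formal power series with a geometric-type expansion. Concretely, $\bigl(1+\tfrac\beta2-\tfrac\beta2 z\bigr)^{-1} = \frac{1}{1+\beta/2}\bigl(1 - \frac{\beta/2}{1+\beta/2}z\bigr)^{-1} = \sum_{j=0}^\infty c_j z^j$ with $c_j = \frac{1}{1+\beta/2}\theta^j$, where $\theta = \frac{\beta/2}{1+\beta/2}\in[0,\tfrac13]$. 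The key point is that $|c_j|\le \theta^j$ with $\theta\le\tfrac13<1$, so $\{c_j\}$ decays geometrically with a ratio bounded \emph{uniformly in }$\beta$.

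Next I would form the Cauchy product $b_k = \sum_{j=0}^{k} a_{k-j}c_j$ and bound it using $|a_{k-j}|\le C(k-j)^\alpha$ (interpreting $0^\alpha$ appropriately, or absorbing the $j=k$ term separately) and $|c_j|\le\theta^j$. This gives $|b_k|\le C\sum_{j=0}^k (k-j)^\alpha \theta^j$. The estimate then splits according to the sign of $\alpha$: if $\alpha\ge 0$ one uses $(k-j)^\alpha\le k^\alpha$ directly to get $|b_k|\le Ck^\alpha\sum_{j\ge0}\theta^j = \frac{C}{1-\theta}k^\alpha\le \tfrac32 C k^\alpha$; if $\alpha<0$ one instead notes that the terms with $j\le k/2$ contribute $\le C(k/2)^\alpha\sum_j\theta^j \le C' k^\alpha$ (here $\alpha<0$ makes $(k/2)^\alpha$ comparable to $k^\alpha$), while the terms with $j>k/2$ are bounded by $C\sum_{j>k/2}(k-j)^\alpha\theta^j$, and since $\theta^j\le\theta^{k/2}$ decays faster than any power of $k$, this tail is $o(k^\alpha)$ — or, more crudely, for $\alpha<0$ one can just use $(k-j)^\alpha\le 1$ for $j<k$ together with the separate term $j=k$, giving a bound that is $O(\theta^{k})$ plus $|a_0 c_k|$, hence certainly $O(k^\alpha)$. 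Either way the bound $|b_k|\le \widetilde C k^\alpha$ follows with $\widetilde C$ depending only on $C$ and $\alpha$ (not on $\beta$, since $\theta\le\tfrac13$ uniformly).

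I do not anticipate a serious obstacle here; the lemma is essentially the standard fact that convolving a power-law sequence with a geometrically decaying sequence preserves the power-law growth/decay rate. The only mild subtlety to handle carefully is the case $\alpha<0$, where $(k-j)^\alpha$ blows up as $j\to k$, so one must keep the geometric weight $\theta^j$ in play near $j=k$ rather than crudely bounding $(k-j)^\alpha$ by $k^\alpha$; splitting the sum at $j=k/2$ (or isolating the last term) resolves this. One should also state once at the outset that the constant $C$ is allowed to change from line to line, as is conventional in the paper.
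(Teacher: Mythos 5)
Your proposal is correct, and it reaches the bound by a route that is different in form from the paper's, though the two are close cousins. The paper does not expand the inverse factor as a series; instead it reads off the two-term recurrence $a_k=\bigl(1+\tfrac{\beta}{2}\bigr)b_k-\tfrac{\beta}{2}b_{k-1}$, rewrites it as $b_k=\tfrac{\beta}{2+\beta}b_{k-1}+\tfrac{2}{2+\beta}a_k$, and closes the argument by induction on $k$, choosing the constant in the conclusion to be at least twice the constant $C_1$ in the hypothesis and using the elementary inequality $n^{\alpha}\leq 2(n+1)^{\alpha}$. Your geometric expansion $\bigl(1+\tfrac{\beta}{2}-\tfrac{\beta}{2}z\bigr)^{-1}=\tfrac{1}{1+\beta/2}\sum_j\theta^j z^j$ with $\theta=\tfrac{\beta/2}{1+\beta/2}\leq\tfrac13$ is exactly the closed-form solution of that recurrence, so your Cauchy-product bound $|b_k|\leq C\sum_{j=0}^k(k-j)^{\alpha}\theta^j$ is the unrolled version of the paper's inductive step. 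What each buys: the paper's induction avoids any case split on the sign of $\alpha$ (it is hidden in $n^{\alpha}\leq 2(n+1)^{\alpha}$, which incidentally is only valid for all $n\geq1$ when $|\alpha|\leq1$ --- the regime actually used), at the price of the slightly opaque constant bookkeeping $C\geq 2C_1$; your convolution estimate makes the uniformity in $\beta$ completely transparent through $\theta\leq\tfrac13$, at the price of splitting the sum at $j=k/2$ when $\alpha<0$ and treating the $j=k$ term separately. Both your handling of the $\alpha<0$ tail (geometric decay of $\theta^{k/2}$ beats any power) and your isolation of $a_0c_k$ are sound, so there is no gap.
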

\begin{proof}
From $a(z)\left(1+\frac{\beta}{2}-\frac{\beta}{2}z\right)^{-1}=b(z)$, we have
$b_0=a_0\left(1+\frac{\beta}{2}\right)^{-1}$ and
$a_k=\left(1+\frac{\beta}{2}\right)b_{k}-\frac{\beta}{2}b_{k-1},k>0$, which leads to
$b_k=\frac{\beta}{2+\beta}b_{k-1}+\frac{2}{2+\beta}a_k$.
Since $a_k=O(k^{\alpha})$, there exist a positive constant $C_1$ independent of $k$ such that
$|a_k|\leq C_1k^{\alpha}$. With the mathematical induction method
and $C_1\leq C/2$, we   complete  the proof.
\end{proof}

From Lemma \ref{lm5-2} and Lemma 3.3 in \cite{ZengLLT15}, one can   derive the following corollary.
\begin{corollary}\label{crlr5-3}
Assume that $0\leq \beta_2\leq 1$, $\alpha\leq 1$, and $\beta_1$ is real.
If $a_n=O(n^{\alpha})$ and
 $(1-z)^{\beta_1}\left(1+\frac{\beta_2}{2}-\frac{\beta_2}{2}z\right)^{-1}
=\sum_{k=0}^{\infty}\tilde{g}_kz^k$,
then $\abs{\sum_{k=0}^n\tilde{g}_ka_{n-k}}\leq C n^{\alpha-\beta_1}$.
\end{corollary}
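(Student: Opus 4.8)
The plan is to peel off the rational factor $\bigl(1+\tfrac{\beta_2}{2}-\tfrac{\beta_2}{2}z\bigr)^{-1}$ and absorb it into the sequence $\{a_n\}$, thereby reducing the claim to Lemma \ref{lm5-2} (to control the new sequence) together with Lemma 3.3 of \cite{ZengLLT15} (which is the same estimate for the pure Gr\"unwald weights of $(1-z)^{\beta_1}$). First I would set $A(z)=\sum_{k=0}^{\infty}a_kz^k$, which converges for $|z|<1$ since $a_n=O(n^\alpha)$, and introduce the auxiliary power series $b(z)=\sum_{k=0}^{\infty}b_kz^k$ determined by
\[
A(z)\Bigl(1+\tfrac{\beta_2}{2}-\tfrac{\beta_2}{2}z\Bigr)^{-1}=b(z),
\qquad\text{i.e.}\qquad
A(z)=\Bigl(1+\tfrac{\beta_2}{2}-\tfrac{\beta_2}{2}z\Bigr)b(z).
\]
Since $1+\tfrac{\beta_2}{2}>0$, the coefficients $b_k$ are well defined by the same two-term recursion appearing in the proof of Lemma \ref{lm5-2}; applying that lemma with $\beta=\beta_2$ and using $0\le\beta_2\le 1$, the bound $|a_k|\le Ck^\alpha$ propagates to $|b_k|\le Ck^\alpha$, i.e. $b_n=O(n^\alpha)$.

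Next I would write $(1-z)^{\beta_1}=\sum_{k=0}^{\infty}g_kz^k$ with $g_k=(-1)^k\binom{\beta_1}{k}$, so that $\sum_{k=0}^{\infty}\tilde{g}_kz^k=(1-z)^{\beta_1}\bigl(1+\tfrac{\beta_2}{2}-\tfrac{\beta_2}{2}z\bigr)^{-1}$. Multiplying this identity by $A(z)$ and using the definition of $b(z)$ gives, near $z=0$,
\[
\Bigl(\sum_{k=0}^{\infty}\tilde{g}_kz^k\Bigr)A(z)
=(1-z)^{\beta_1}\Bigl(1+\tfrac{\beta_2}{2}-\tfrac{\beta_2}{2}z\Bigr)^{-1}A(z)
=(1-z)^{\beta_1}\,b(z).
\]
Comparing the coefficient of $z^n$ on both sides yields the key identity
$\sum_{k=0}^{n}\tilde{g}_k a_{n-k}=\sum_{k=0}^{n}g_k b_{n-k}$. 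Since $b_n=O(n^\alpha)$ and $\alpha\le 1$, Lemma 3.3 of \cite{ZengLLT15} applies to the right-hand side and gives $\bigl|\sum_{k=0}^{n}g_k b_{n-k}\bigr|\le Cn^{\alpha-\beta_1}$, which is exactly the asserted estimate.

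The argument is essentially bookkeeping, and I do not expect a genuine obstacle: the convolution identity is a formal power-series manipulation valid in a neighborhood of $z=0$, so no interchange-of-limits or analytic-continuation issue arises. The only points needing care are verifying that the hypotheses of the two cited results are met after the change of sequence — specifically that $0\le\beta_2\le1$ is precisely what Lemma \ref{lm5-2} needs to carry the $O(n^\alpha)$ bound from $\{a_n\}$ to $\{b_n\}$, and that $\alpha\le1$ is exactly the condition under which Lemma 3.3 of \cite{ZengLLT15} delivers the $n^{\alpha-\beta_1}$ rate.
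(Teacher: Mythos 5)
Your argument is correct and follows exactly the route the paper intends: the paper states the corollary as a direct consequence of Lemma \ref{lm5-2} (to transfer the $O(n^{\alpha})$ bound through the factor $\bigl(1+\tfrac{\beta_2}{2}-\tfrac{\beta_2}{2}z\bigr)^{-1}$) and Lemma 3.3 of \cite{ZengLLT15} (for the $(1-z)^{\beta_1}$ convolution), without writing out the details. Your convolution identity $\sum_{k=0}^{n}\tilde{g}_k a_{n-k}=\sum_{k=0}^{n}g_k b_{n-k}$ is precisely the bookkeeping that makes this composition explicit, and the hypotheses $0\le\beta_2\le1$ and $\alpha\le1$ are used exactly where you say they are.
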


\subsubsection{Proof of Theorem \ref{thm3-2}}
\begin{proof}
Following the similar idea in \cite{Lub86b} (see also \cite{Zeng14}),
we derive that the  method \eqref{fode3}  is stable if $-\lambda\in\mathbb{S}$,
where $\mathbb{S}$ is defined by
$\mathbb{S}=\mathbb{C}\setminus
 \big\{\sum_{j=1}^Q\nu_j\tau^{-\alpha_j}g^{(\alpha_j)}(z),|z|\leq1,\tau>0\big\}$,
in which
$g^{(\alpha_j)}(z)=(1-z)^{\alpha_j}\left(1+\frac{\alpha_j}{2}-\frac{\alpha_j}{2}z\right)$.
Since $\text{Re}(g^{(\alpha_j)}(z))\geq 0$ for all $|z|\leq 1,\tau>0$ and $0< \alpha_j\leq 1$,
the  region $\mathbb{S}$ contains the whole negative real line, i.e., the scheme
\eqref{fode3}  is stable for any $\tau>0$. This completes the proof.
\end{proof}

\subsubsection{Proof of Theorem \ref{thm3-1}}
\begin{proof}
Let $e^n=Y(t_n)-y^n$.
Then we derive the error equation of \eqref{fode3}  as
\begin{eqnarray}
&&\sum_{j=1}^Q\frac{\nu_j}{\tau^{\alpha_j}}\left[\sum_{k=0}^ng^{(\alpha_j)}_{n-k}e^k
+\sum_{r=1}^{m_j}w^{(\alpha_j)}_{n,r}e^r\right] = f(t_n,Y(t_n))-f(t_n,y^n)+R^n,\label{fode4}
\end{eqnarray}
where $R^n=\sum_{j=1}^QO(\tau^2t_{n}^{\sigma_{m_j+1}-2-\alpha_j})$ is defined in \eqref{fode2}.
By  Lemma 3.4 in \cite{ZengLLT13},
Eq. \eqref{fode4} can be written in the following form
\begin{eqnarray}
&&\nu_1e^n+\sum_{j=2}^Q{\nu_j}{\tau^{\alpha_1-\alpha_j}}\left[\sum_{k=0}^nG^{(j)}_{n-k}e^k +\sum_{r=1}^{m_j}\sum_{k=0}^n\tilde{g}_{n-k}w^{(\alpha_j)}_{k,r}e^r \right]\nonumber \\
&=& {\tau^{\alpha_1}}\sum_{k=0}^n\tilde{g}_{n-k}(f(t_k,Y(t_k))-f(t_k,y^k))
-\nu_1\sum_{r=1}^{m_1}\sum_{k=0}^n\tilde{g}_{n-k}w^{(\alpha_1)}_{k,r}e^r+\tilde{R}^n,\label{fode5}
\end{eqnarray}
where $\tilde{R}^n={\tau^{\alpha_1}}\sum_{k=0}^n\tilde{g}_{n-k}R^k$, and
$\{\tilde{g}_k\}$ and  $\{G^{(j)}_{k}\}$  are, respectively, the coefficients of the
Taylor expansions of the  functions
$\tilde{g}(z)=(1-z)^{-\alpha_1}
\left(1+\frac{\alpha_1}{2}-\frac{\alpha_1}{2}z\right)^{-1}$  and
$$G^{(j)}(z)=(1-z)^{-(\alpha_1-\alpha_j)}
\left(1+\frac{\alpha_j}{2}-\frac{\alpha_j}{2}z\right)
\left(1+\frac{\alpha_1}{2}-\frac{\alpha_1}{2}z\right)^{-1}.$$

Assume that $f(t,Y)$ satisfies the uniform  Lipschitz condition, i.e., there exists a positive
constant $L$ such that
$|f(t_k,Y(t_k))-f(t_k,y^k)|\leq L|Y(t_k)-y^k|=L|e^k|$. Hence, for small enough $\tau$,
we have
\begin{eqnarray}
&&|e^n|\leq C\sum_{j=2}^Q{\tau^{\alpha_1-\alpha_j}}\sum_{k=0}^{n-1}G^{(j)}_{n-k}|e^k|
+C{\tau^{\alpha_1}}\sum_{k=0}^{n-1}\tilde{g}_{n-k}|e^k| +\rho^n,\label{fode7}
\end{eqnarray}
where
$\rho^n=C\sum_{j=1}^Q\sum_{r=1}^{m_j}\tau^{\alpha_1-\alpha_j}
\Big|\sum_{k=0}^n\tilde{g}_{n-k}w^{(\alpha_j)}_{k,r}\Big||e^r|+C|\tilde{R}^n|.$


It is known that $(1-z)^{\alpha}=\sum_{k=0}^{\infty}\omega_k^{(\alpha)}z^k$ with $\omega_k^{(\alpha)}=O(k^{-\alpha-1})$.
It then follows from  Corollary \ref{crlr5-3} that $G^{(j)}_{k}=O(k^{\alpha_1-\alpha_j-1})$ and $\tilde{g}_{k}=O(k^{\alpha_1-1})$.
We can similarly obtain
$|\sum_{k=0}^n\tilde{g}_{n-k}w^{(\alpha_j)}_{k,r}|\leq Cn^{\sigma_{m_j}-2-\alpha_j+\alpha_1}$
and
$|\tilde{R}^n|\leq C \tau^2\sum_{j=1}^{Q}t_n^{\sigma_{m_j+1}-2-\alpha_j+\alpha_1}$.
Hence,
$\rho^n\leq  C\left(\sum_{k=0}^{\max_{1\leq j\leq Q}\{m_j\}}|e^k|
+\sum_{j=1}^{Q}n^{\sigma_{m_j+1}-2-\alpha_j+\alpha_1}\tau^{\sigma_{m_j+1}-\alpha_j+\alpha_1}\right)$.
Similar to the proof in   \cite[Lemma 3.4]{LiZeng13},  we derive \eqref{fode6}.

If $f(t,Y)$ satisfies the local Lipschitz condition with respect to $Y$, then we can
also derive \eqref{fode6} by the mathematical induction method, which is omitted here,
see \cite{LiZeng13}. This ends the proof.
\end{proof}

\subsubsection{Proof of Theorem \ref{theorem3-3}}
We first introduce a lemma.
\begin{lemma}[\cite{WangVong14b}]\label{lm4.1}
Suppose that $-1<\alpha \leq 1$. Then we have
$$\sum_{n=0}^K\left(\sum_{k=0}^ng^{(\alpha)}_{k}v_{n-k}\right)v_{n}
\geq 0,$$
where $\{v_k\}$ are real numbers and $g^{(\alpha)}_{k}$ is defined by \eqref{g-k}.
\end{lemma}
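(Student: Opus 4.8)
The plan is to reduce the inequality to a positivity property of the generating function and then prove that property by a short trigonometric estimate. Recall from \eqref{g-k} that $g^{(\alpha)}(z)=\sum_{k\ge0}g^{(\alpha)}_kz^k=(1-z)^{\alpha}\bigl(1+\tfrac{\alpha}{2}-\tfrac{\alpha}{2}z\bigr)$. For a finitely supported real sequence $\{v_n\}_{n=0}^K$ put $V(\theta)=\sum_{n=0}^Kv_ne^{in\theta}$; expanding $g^{(\alpha)}(e^{i\theta})|V(\theta)|^2$ as a Fourier series and integrating term by term picks out exactly the double sum in the statement, so that
\[
\sum_{n=0}^K\Bigl(\sum_{k=0}^ng^{(\alpha)}_kv_{n-k}\Bigr)v_n
=\frac{1}{2\pi}\int_{-\pi}^{\pi}g^{(\alpha)}(e^{i\theta})\,|V(\theta)|^2\dx[\theta]
=\frac{1}{2\pi}\int_{-\pi}^{\pi}\mathrm{Re}\,g^{(\alpha)}(e^{i\theta})\,|V(\theta)|^2\dx[\theta],
\]
the last step because the left-hand side is real. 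For $0<\alpha\le1$ one has $\sum_k|g^{(\alpha)}_k|<\infty$, so the interchange of sum and integral is immediate; for $-1<\alpha\le0$ the symbol has only an integrable singularity at $\theta=0$ (indeed $|g^{(\alpha)}(re^{i\theta})|\le C|\theta|^{\alpha}$ uniformly for $r\in[\tfrac12,1]$, and $|\theta|^{\alpha}\in L^1$), so I would first establish the identity with $e^{i\theta}$ replaced by $re^{i\theta}$, $r<1$, and then pass to the limit $r\to1^-$ by dominated convergence. In either case the lemma follows once we show $\mathrm{Re}\,g^{(\alpha)}(e^{i\theta})\ge0$ for all $\theta$.

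To prove this pointwise bound I would pass to a convenient polar representation. Since the $g^{(\alpha)}_k$ are real, $\mathrm{Re}\,g^{(\alpha)}(e^{i\theta})$ is even in $\theta$, so it suffices to take $\theta\in[0,\pi]$ and set $\beta=(\theta-\pi)/2\in[-\pi/2,0]$; then $1-e^{i\theta}=2\cos\beta\,e^{i\beta}$ with $\cos\beta\ge0$. Writing $g^{(\alpha)}(z)=(1-z)^{\alpha}+\tfrac{\alpha}{2}(1-z)^{\alpha+1}$ and substituting, one obtains
\[
g^{(\alpha)}(e^{i\theta})=(2\cos\beta)^{\alpha}\,e^{i\alpha\beta}\bigl(1+\alpha\cos\beta\,e^{i\beta}\bigr),
\]
hence $\mathrm{Re}\,g^{(\alpha)}(e^{i\theta})=(2\cos\beta)^{\alpha}\,\bigl|1+\alpha\cos\beta\,e^{i\beta}\bigr|\,\cos(\alpha\beta+\gamma)$ with $\gamma=\arg\bigl(1+\alpha\cos\beta\,e^{i\beta}\bigr)$. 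The first two factors are $\ge0$, so everything comes down to showing $|\alpha\beta+\gamma|\le\pi/2$.

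The crux is the bound on $\gamma$, and this is where the hypotheses $|\alpha|\le1$ and $\alpha>-1$ are used. The point $1+\alpha\cos\beta\,e^{i\beta}$ lies on the circle of radius $|\alpha|\cos\beta\le1$ centered at $1$, so $|\gamma|\le\arcsin(|\alpha|\cos\beta)$. I would then prove the elementary inequality $|\alpha|\cos\beta\le\cos(|\alpha\beta|)$: with $p=|\alpha|\in[0,1]$ and $\phi=|\beta|\in[0,\pi/2]$, the function $G(p)=\cos(p\phi)-p\cos\phi$ satisfies $G(0)=1$, $G(1)=0$ and $G''(p)=-\phi^2\cos(p\phi)\le0$ on $[0,1]$ (since $p\phi\le\phi\le\pi/2$), so $G$ is concave and lies above its chord, giving $G(p)\ge1-p\ge0$. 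Therefore $|\gamma|\le\arcsin\bigl(\cos(|\alpha\beta|)\bigr)=\pi/2-|\alpha\beta|$, and the triangle inequality yields $|\alpha\beta+\gamma|\le|\alpha\beta|+|\gamma|\le\pi/2$, so $\cos(\alpha\beta+\gamma)\ge0$ and $\mathrm{Re}\,g^{(\alpha)}(e^{i\theta})\ge0$. (The endpoint $\beta=-\pi/2$, corresponding to the singularity $z=1$, is treated directly, and it is there that the strictness $\alpha>-1$ enters.) I expect the main obstacle to be precisely this argument estimate: the crude bounds $|\alpha\beta|\le\pi/2$ and $|\gamma|<\pi/2$ are each too weak, and one genuinely needs the sharp trade-off encoded in the concavity inequality; the only other technicality is the $r\to1^-$ justification of the Fourier identity in the regime $-1<\alpha\le0$.
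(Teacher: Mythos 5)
Your argument is correct, and it supplies a proof that the paper itself omits: Lemma \ref{lm4.1} is only quoted here with a citation to \cite{WangVong14b}, so there is no in-paper proof to compare against. Your route --- rewriting the quadratic form as $\frac{1}{2\pi}\int_{-\pi}^{\pi}\mathrm{Re}\,g^{(\alpha)}(e^{i\theta})|V(\theta)|^2\,\mathrm{d}\theta$ and reducing everything to the pointwise bound $\mathrm{Re}\,g^{(\alpha)}(e^{i\theta})\geq 0$ --- is exactly the standard generating-function (Grenander--Szeg\H{o} type) argument used in the cited reference and in \cite{TianZD14}; the polar factorization $1-e^{i\theta}=2\cos\beta\,e^{i\beta}$, the bound $|\gamma|\leq\arcsin(|\alpha|\cos\beta)$, and the concavity inequality $|\alpha|\cos\beta\leq\cos(|\alpha\beta|)$ all check out, and your care with the $r\to 1^-$ limit for $-1<\alpha\leq 0$ (where the symbol has only an integrable singularity at $\theta=0$) closes the one genuine technical gap in the naive term-by-term interchange.
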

\begin{proof}
From \eqref{fode4}, we have
\begin{eqnarray}
&&\sum_{j=1}^Q\frac{\nu_j}{\tau^{\alpha_j}}\left[\sum_{k=0}^ng^{(\alpha_j)}_{n-k}e^k
+\sum_{r=1}^{m_j}w^{(\alpha_j)}_{n,r}e^r\right] + e^n=R^n,\label{apx-fode4}
\end{eqnarray}
where $|R^n|\leq C\tau^2\sum_{j=1}^Qt_{n}^{\sigma_{m_j+1}-2-\alpha_j}$ is defined in \eqref{fode2}.

Multiplying $e^n$ on both sides of \eqref{apx-fode4} and summing up $n$ from $1$ to $K$, one has
\begin{eqnarray}
&&\sum_{j=1}^Q\frac{\nu_j}{\tau^{\alpha_j}}\left[\sum_{n=1}^K\sum_{k=1}^ng^{(\alpha_j)}_{n-k}e^ke^n
+\sum_{r=1}^{m_j}\sum_{n=1}^Kw^{(\alpha_j)}_{n,r}e^re^n\right] + \sum_{n=1}^K|e^n|^2
=\sum_{n=1}^KR^ne^n.\label{apx-fode4-2}
\end{eqnarray}
Applying Lemma \ref{lm4.1} and  the Cauchy--Schwarz inequality, we have
\begin{eqnarray}
&&\sum_{n=1}^K|e^n|^2
\leq\sum_{j=1}^Q\frac{\nu_j}{\tau^{\alpha_j}}\sum_{n=1}^K\sum_{k=1}^ng^{(\alpha_j)}_{n-k}e^ke^n
 + \sum_{n=1}^K|e^n|^2\nonumber\\
&\leq&\sum_{n=1}^KR^ne^n-\sum_{j=1}^Q\frac{\nu_j}{\tau^{\alpha_j}} \sum_{r=1}^{m_j}\sum_{n=1}^Kw^{(\alpha_j)}_{n,r}e^re^n\nonumber\\
&\leq&\frac{1}{2}\sum_{n=1}^K(|R^n|^2+|e^n|^2)+
\sum_{j=1}^Q \sum_{r=1}^{m_j}\sum_{n=1}^K
\bigg(\frac{|w^{(\alpha_j)}_{n,r}|^2\nu_j^2\tau^{-2\alpha_j}}{4\epsilon}
|e^r|^2+\epsilon|e^n|^2\bigg),\label{apx-fode4-3}
\end{eqnarray}
where $\epsilon>0$ is a positive constant. If we choose $\epsilon=\frac{1}{4Q(m_1+m_2+...+m_Q)}$,
then we have
\begin{eqnarray}
\sum_{n=1}^K|e^n|^2
&\leq&C\sum_{n=1}^K|R^n|^2+C\sum_{j=1}^Q \sum_{r=1}^{m_j}
\sum_{n=1}^K{\tau^{-2\alpha_j}|w^{(\alpha_j)}_{n,r}|^2}|e^r|^2.\label{apx-fode4-4}
\end{eqnarray}
Let $q_j=\sigma_{m_j+1}-\alpha_j$. From \eqref{apx-fode4-4}, \eqref{s4:eq-1}, and
$|R^n|\leq C\tau^2\sum_{j=1}^Qt_{n}^{\sigma_{m_j+1}-2-\alpha_j}$, we have
\begin{eqnarray}
\tau\sum_{n=1}^K|e^n|^2
&\leq& C\tau\sum_{n=1}^K|R^n|^2+C\sum_{j=1}^Q \sum_{r=1}^{m_j}
\sum_{n=1}^K{\tau^{1-2\alpha_j}|w^{(\alpha_j)}_{n,r}|^2}|e^r|^2\nonumber\\
&\leq&C\sum_{j=1}^Q\tau^{2q_j+1}\sum_{n=1}^Kn^{2(q_j-2)}
+C\max_{1\leq r \leq m}|e^r|^2
\bigg(\sum_{j=1}^Q\tau^{1-2\alpha_j}\sum_{n=1}^Kn^{{2(\sigma_m-\alpha_j-2)}}\bigg)\nonumber\\
&\leq&C\sum_{j=1}^Q\bigg(\tau^{2q_j+1}K^{\max\{0,2q_j-3\}}
+\max_{1\leq r \leq m}|e^r|^2\tau^{1-2\alpha_j}K^{\max\{0,2\sigma_m-2\alpha_j-3\}}\bigg).\nonumber
\label{apx-fode4-5}
\end{eqnarray}
Let $q=\min_{1\leq j\leq Q}\{q_j\}$.  Then we have from the above equation
\begin{eqnarray}
\tau\sum_{n=1}^K|e^n|^2
&\leq&C\tau^{\min\{4,2q+1\}}
+C\max_{1\leq r \leq m}|e^r|^2
\sum_{j=1}^Q\tau^{1-2\alpha_j}K^{\max\{0,2\sigma_m-2\alpha_j-3\}},
\nonumber
\end{eqnarray}
which leads to \eqref{fode6-2}. The proof is completed.
\end{proof}

\section{Summary}\label{concl}

In this work, we obtained the asymptotic expansion of the error equation of the
WSGL formula \eqref{s31-1-2} proposed in \cite{TianZD14}. The WSGL formula is second-order accurate far from $t=0$  but is not second-order accurate  near $t=0$. Hence  second-order numerical scheme is not expected when the formula is
applied to   time-fractional differential equations.
We then followed Lubich's approach  by adding the correction terms
to the  WSGL formula  and obtained a modified formula
with global second-order accuracy  both around and far from $t=0$.
We  applied our modified formula   to solve  two-term
FODEs and   two-term time-fractional anomalous diffusion equations
and proved the stability and second-order convergence in time.

We found that only a small number of correction terms  is  needed to improve
convergence order and accuracy regardless of regularity of the analytical solutions. We showed both theoretically and numerically that
a few correction terms are sufficient to obtain  relatively high accuracy at $t=0$ and
thus improve the convergence order and accuracy far from $t=0$.  With a few correction terms, we avoid solving the   linear system  with an exponential Vandermonde matrix of large size to obtain starting weights. We observed that with no more than $10$ terms, the linear system can be accurately solved with double precision without harming the accuracy of the second-order formula. Moreover, the correction terms do not have to  exactly match
the singularity indexes of solutions to FDEs. Even when we do not know the precise singularity indices of solutions to FDEs, we provided some empirical guidelines to choose correction terms.

Although, we only focus on the  WSGL formulas,
the methodology proposed here can be also applied to recover  globally high accuracy for some other  high-order formulas, see e.g.
\cite{CelikDuman12,ChenDeng14,DingLC14a,DingLC14b,GaoSS15,ZengLLT13,ZhaoSH14,ZhouTD13}, where the high-order accuracy requires
vanishing initial/boundary values of the corresponding function, even   vanishing values of  higher-order derivatives at boundaries.


\newpage
\pagenumbering{roman}

\setcounter{section}{0}
\setcounter{thm}{0}
\def\thesection{\alph{section}} 
\section{Supplementary Material}
We provide more numerical results to support the theoretical analysis and the proofs of Theorems \ref{thm:stability}--\ref{thm:convergence} in Subsection \ref{sec4-3}.

\subsection{Further investigation of the upper bound of \eqref{Rn2}} Numerically, we find  a much better upper bound of \eqref{Rn2}, which is presented below
\begin{equation}\label{Rn2-2}
|{R_n}|\leq C_mS_m^{\sigma} \tau^{\sigma-\alpha-2} n^{\max\{\sigma_{\max},\sigma\}-\alpha-2}.
\end{equation}

We  plot a bound of $C_m$ in \eqref{Rn2-2} for $U(t)=t^{0.85}$, which is shown in Fig. \ref{fig2-3}. We  see that $C_m$ does not increase with respect to $m$  and $\alpha$.

\begin{figure}[!ht]
\begin{center}
\begin{minipage}{0.45\textwidth}\centering
\epsfig{figure=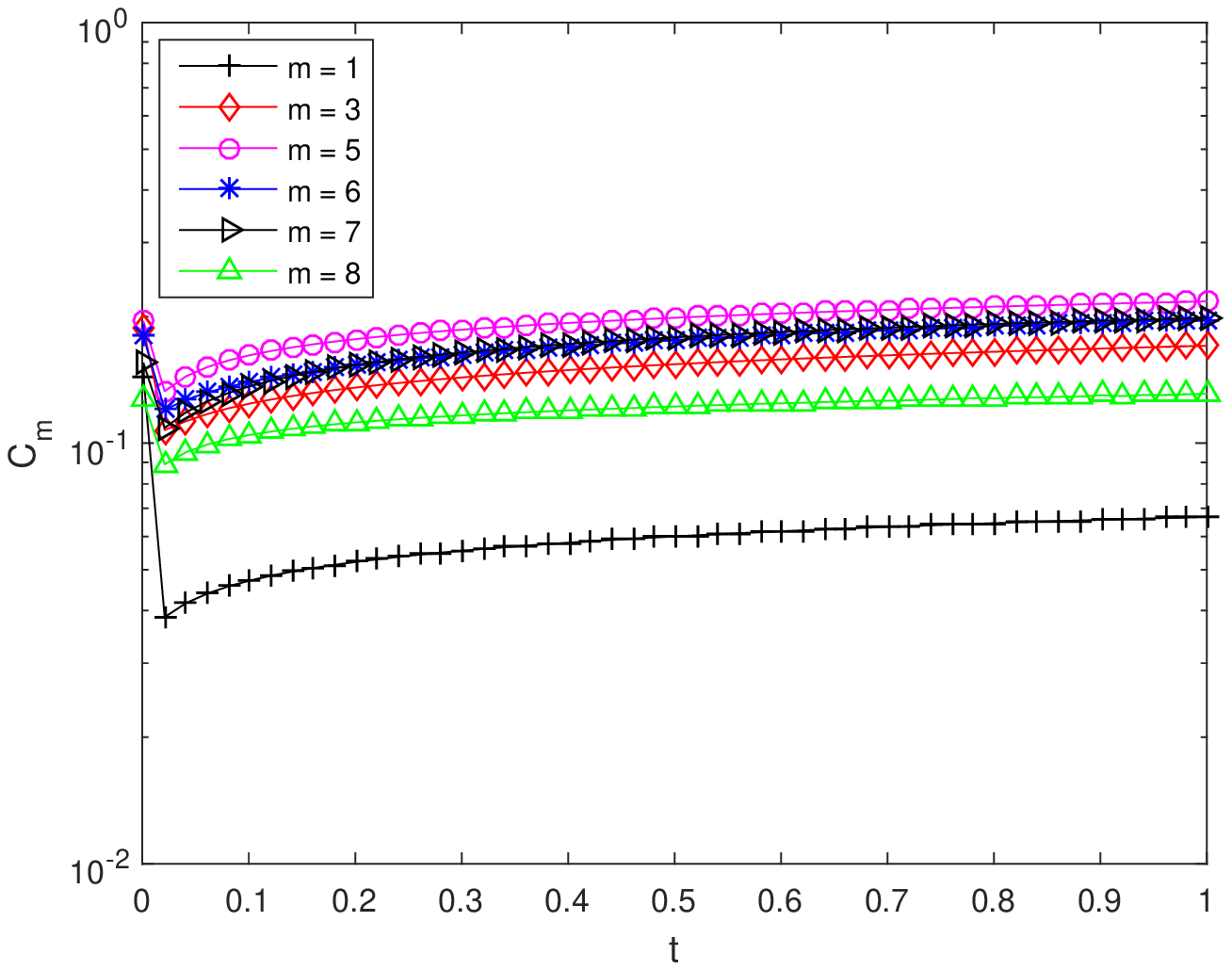,width=5cm}  \par{(a)   $\alpha=0.06$.}
\end{minipage}
\begin{minipage}{0.45\textwidth}\centering
\epsfig{figure=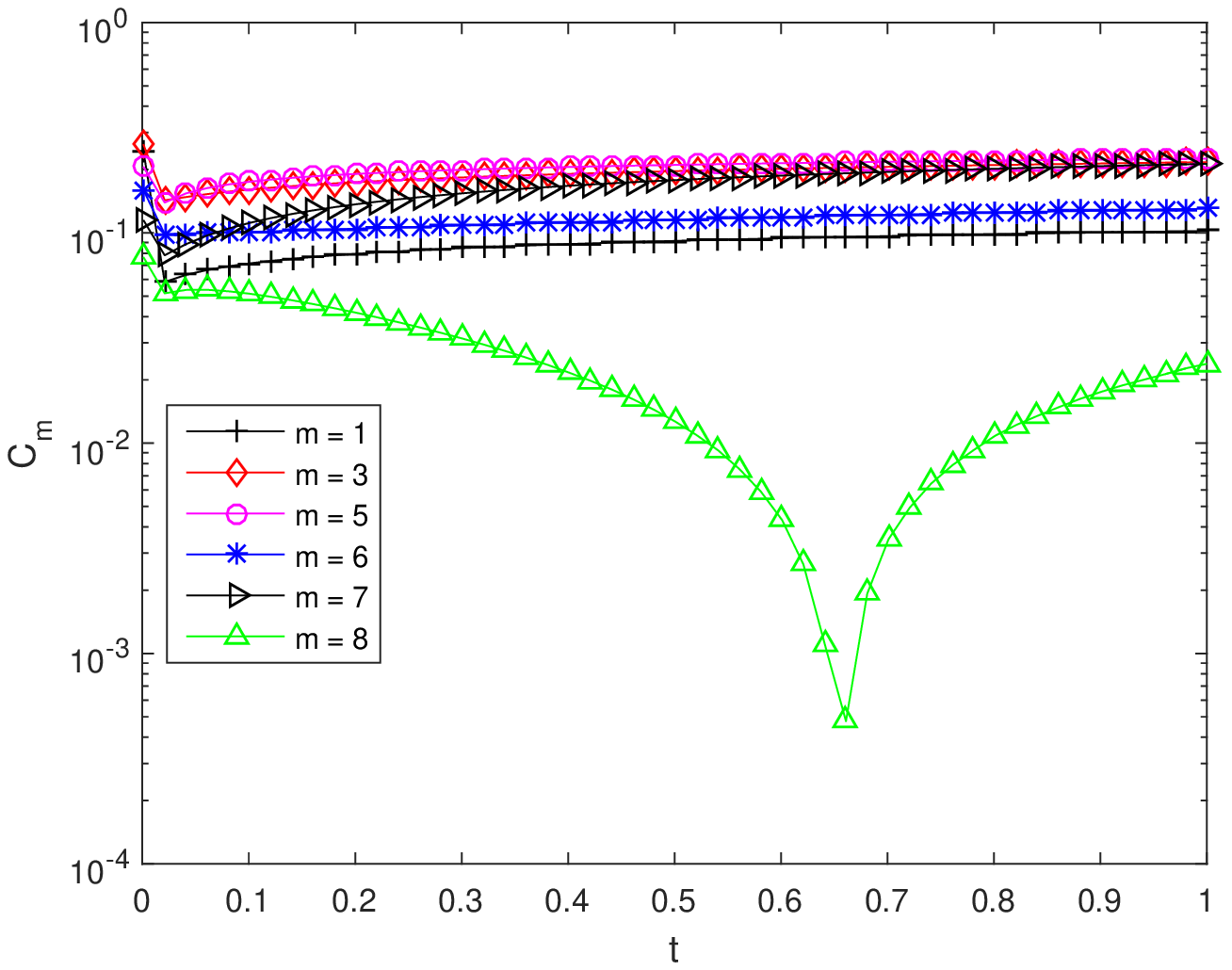,width=5cm}   \par{(b)  $\alpha=0.1$.}
\end{minipage}
\begin{minipage}{0.45\textwidth}\centering
\epsfig{figure=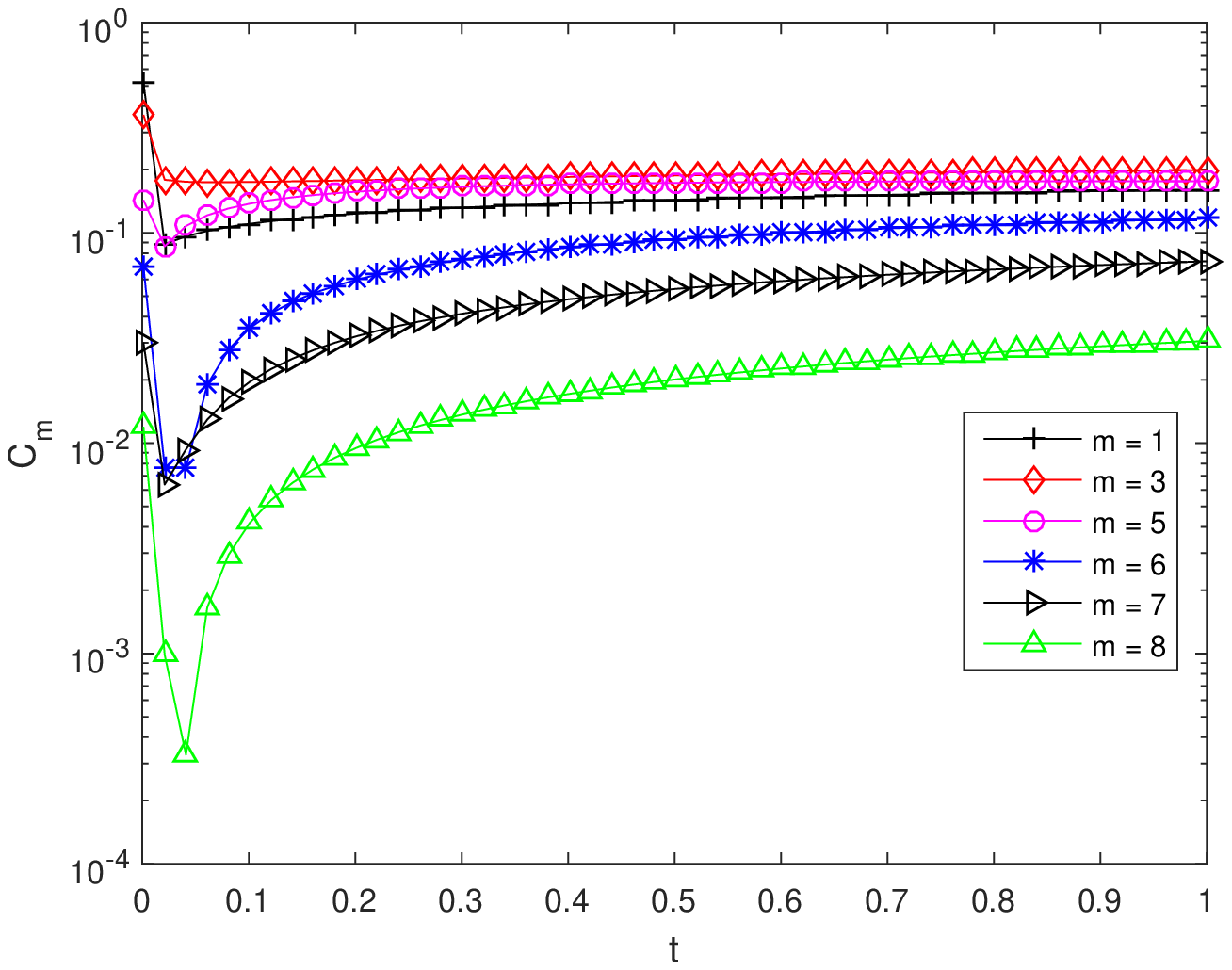,width=5cm}  \par{(c) $\alpha=0.2$.  }
\end{minipage}
\begin{minipage}{0.45\textwidth}\centering
\epsfig{figure=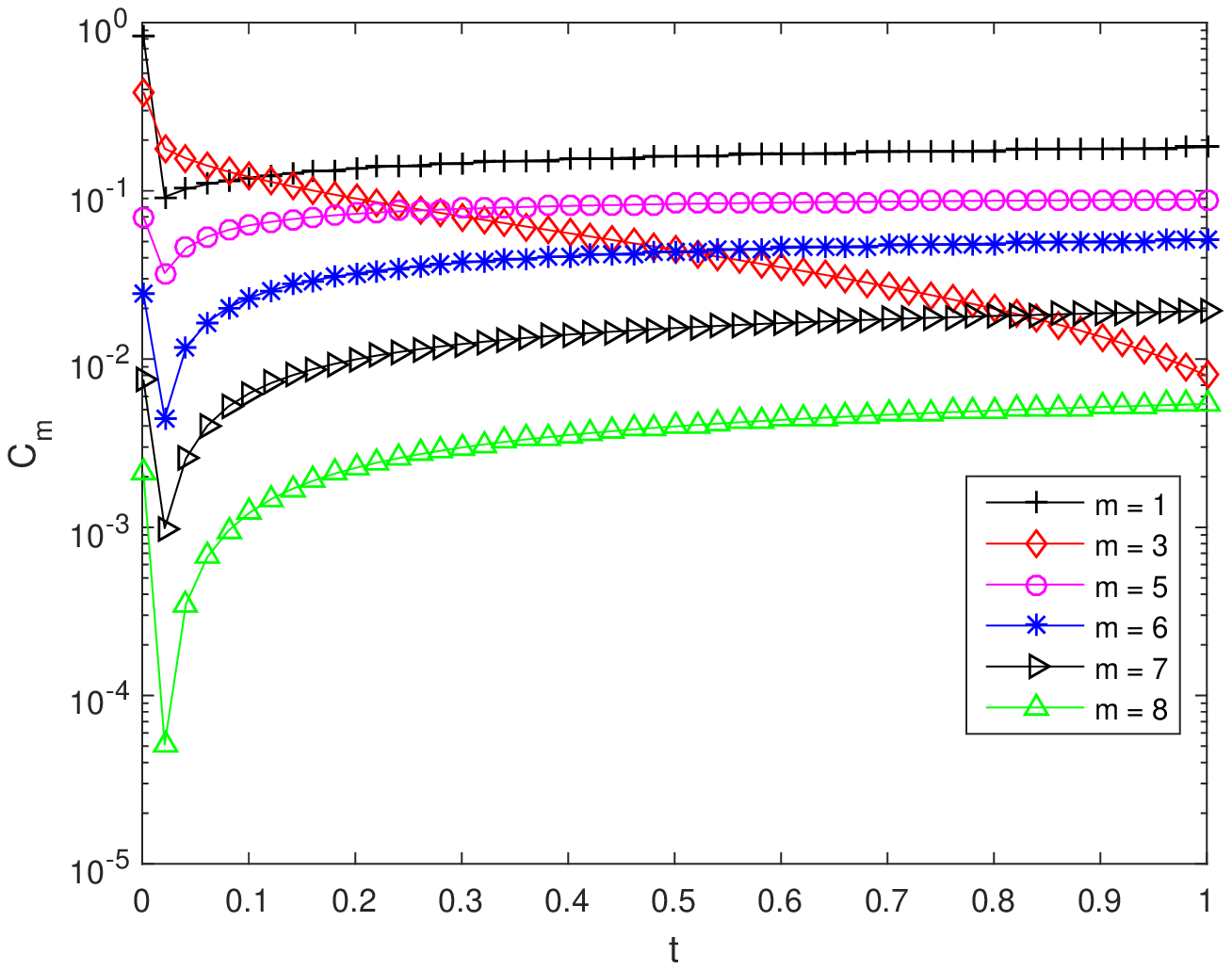,width=5cm}   \par{(d)  $\alpha=0.3$.  }
\end{minipage}
\end{center}
\caption{The  bound of constant $C_m$ in \eqref{Rn2-2},
$U(t)=t^{0.85}$, $\tau=10^{-3}$.\label{fig2-3}}
\end{figure}

\subsection{More numerical results for multi-term FODEs}
We present an example using more than ten correction terms to solve the following FODE.
\begin{example}\label{appdx-eg-1}
Consider the following two-term FODE
\begin{equation}\label{sec-appdx:eq1}
{}_{C}D^{2\alpha}_{0,t}Y(t)+{}_{C}D^{\alpha}_{0,t}Y(t)=-Y(t)+f(t),{\quad}t{\,\in\,}
 {(0,T],T>0}\\
\end{equation}
subject to the initial condition $Y(0)=1$,   and $0<\alpha\leq1/2$.
\end{example}
Choose a suitable right-hand side function $f(t)$ such that
the analytical solution of \eqref{sec-appdx:eq1} is
$$Y(t)=1+\sum_{k=1}^{16}\frac{t^{(k+1)\alpha}}{k}.$$

Here, we use the multi-precision toolbox  with 48 significant digits in the computation in order to avoid round-off errors (see http://www.mathworks.com/matlabcentral
/fileexchange/6446-multiple-precision-toolbox-for-matlab).

We consider only the fractional order $\alpha=0.05,0.1$ with $\sigma_k=(k+1)\alpha$ in \eqref{fode3}, and the pointwise errors are shown in Figs. \ref{appdx-fig2}--\ref{appdx-fig3}. We can see that very accurate numerical solutions are obtained as the number of correction terms increases. Although we did not observe the global second-order accuracy, the small factor $S_m^{\sigma}$ in the error equation makes numerical solutions very accurate as correction terms increases.

\begin{figure}[!t]
\begin{center}
\begin{minipage}{0.45\textwidth}\centering
\epsfig{figure=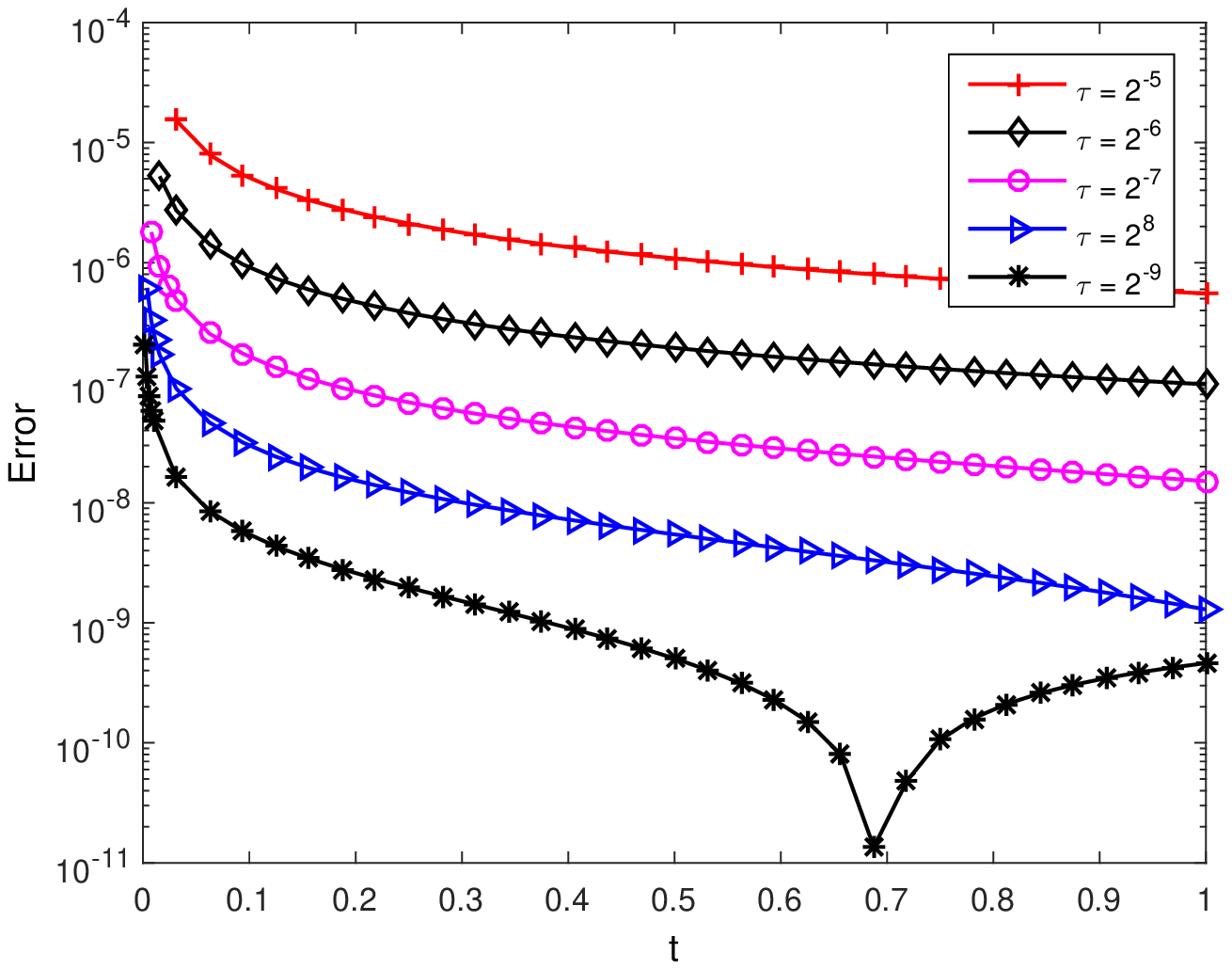,width=5.5cm}   \par{(a)  $m=10$.}
\end{minipage}
\begin{minipage}{0.45\textwidth}\centering
\epsfig{figure=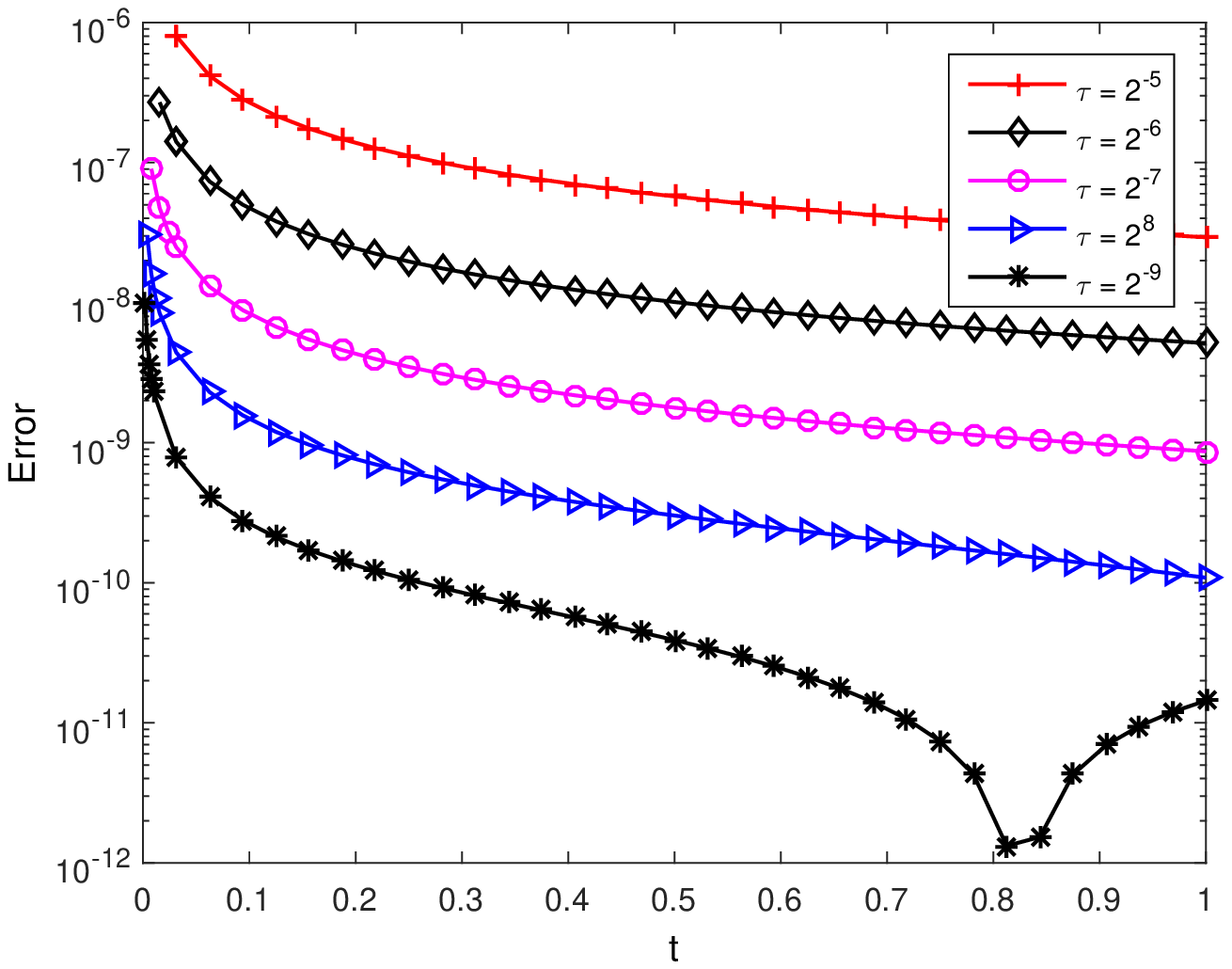,width=5.5cm}  \par{(b)   $m=12$.}
\end{minipage}
\begin{minipage}{0.45\textwidth}\centering
\epsfig{figure=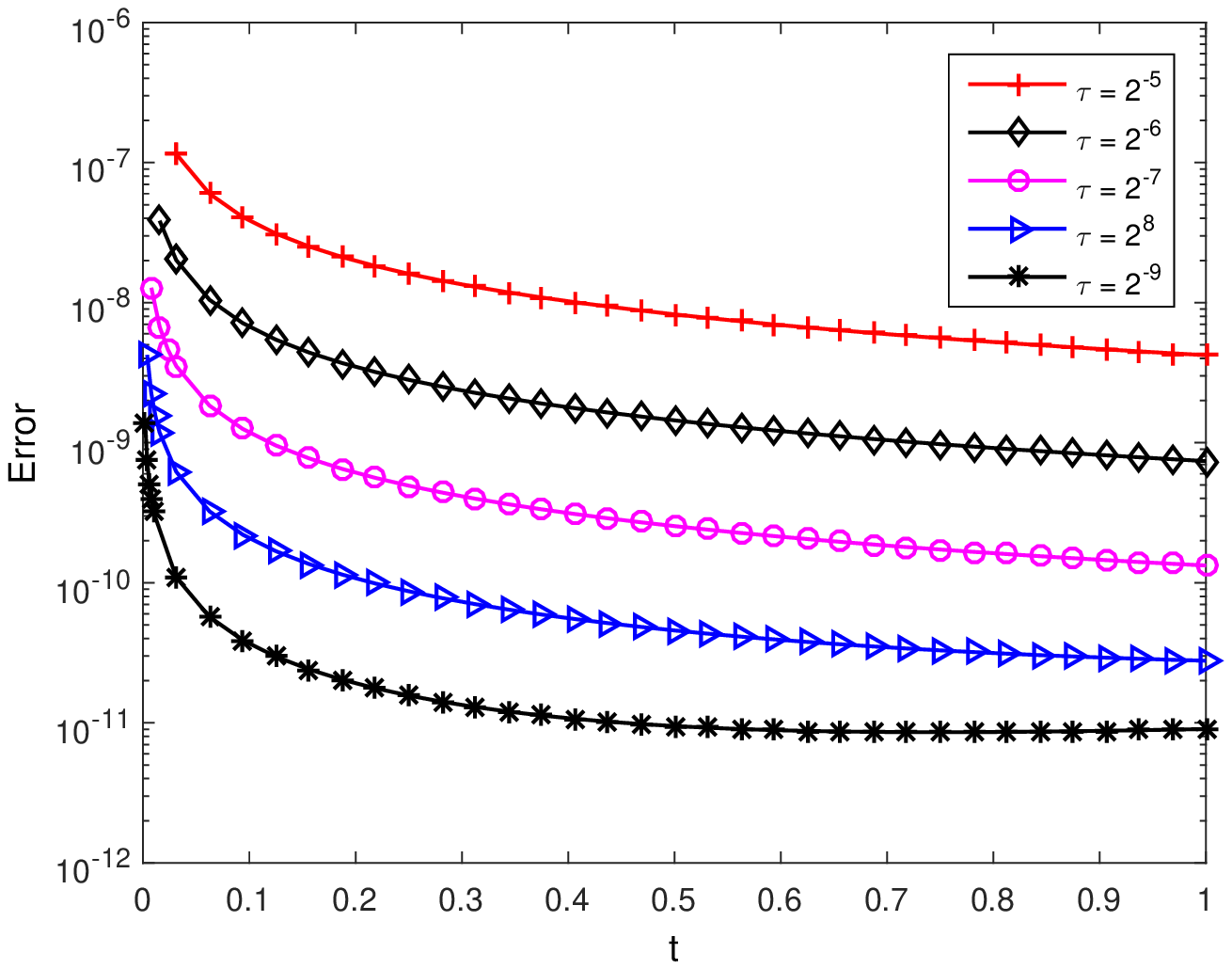,width=5.5cm}   \par{(c) $m=14$.}
\end{minipage}
\begin{minipage}{0.45\textwidth}\centering
\epsfig{figure=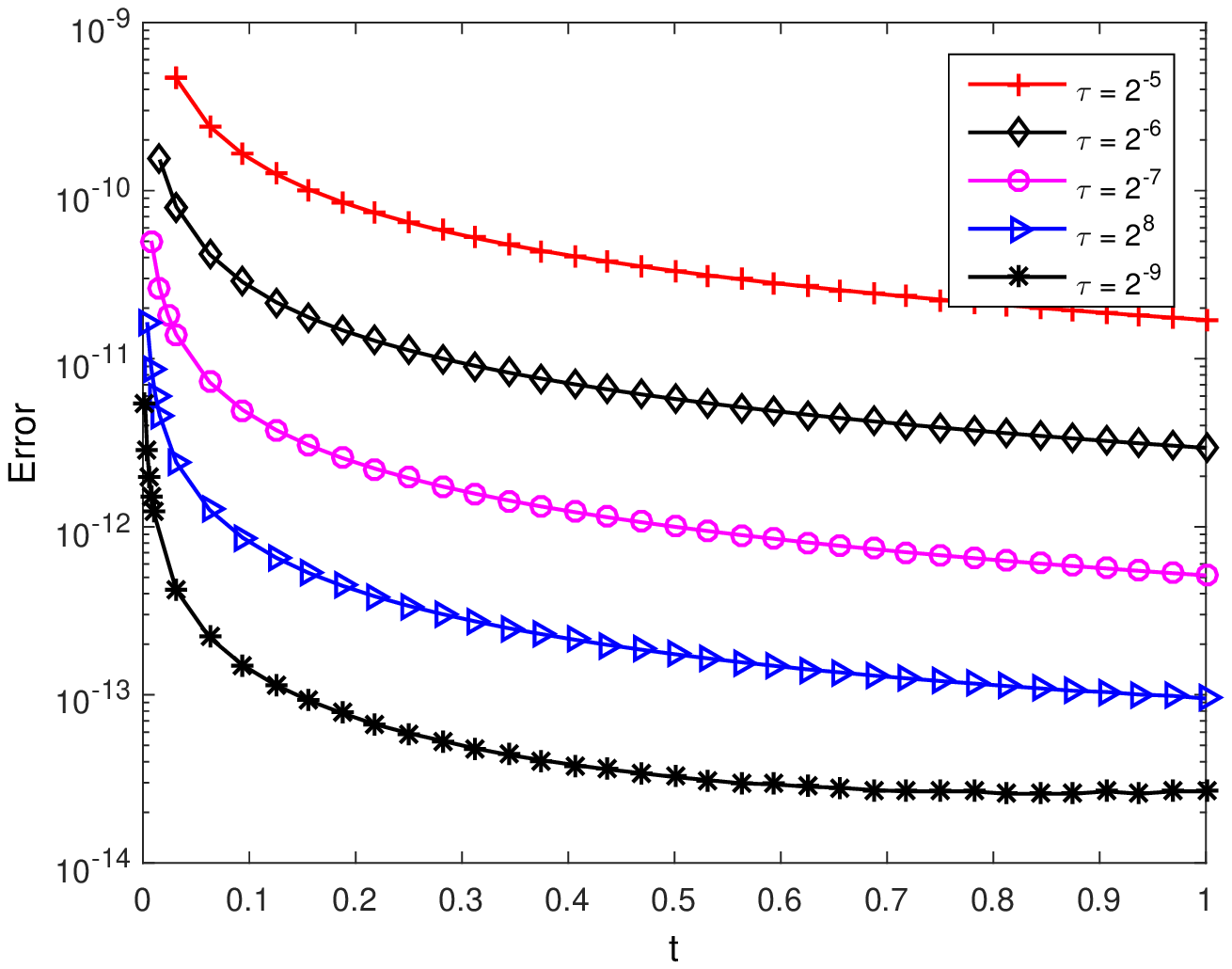,width=5.5cm}   \par{(d) $m=15$.}
\end{minipage}
\end{center}
\caption{Pointwise errors of Example \ref{appdx-eg-1},  $\alpha=0.1$.\label{appdx-fig2}}
\end{figure}

\begin{figure}[!t]
\begin{center}
\begin{minipage}{0.45\textwidth}\centering
\epsfig{figure=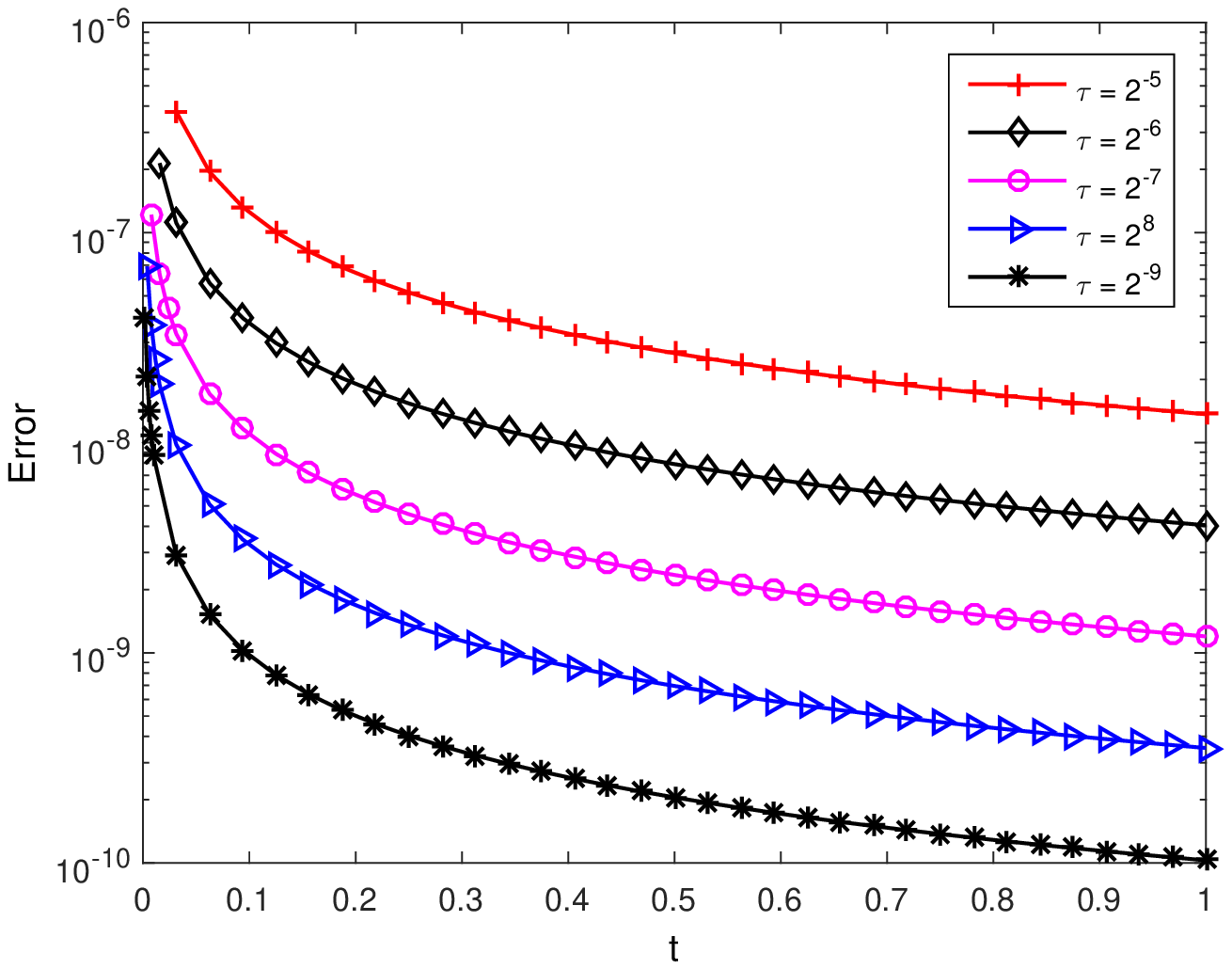,width=5.5cm}   \par{(a)  $m=10$.}
\end{minipage}
\begin{minipage}{0.45\textwidth}\centering
\epsfig{figure=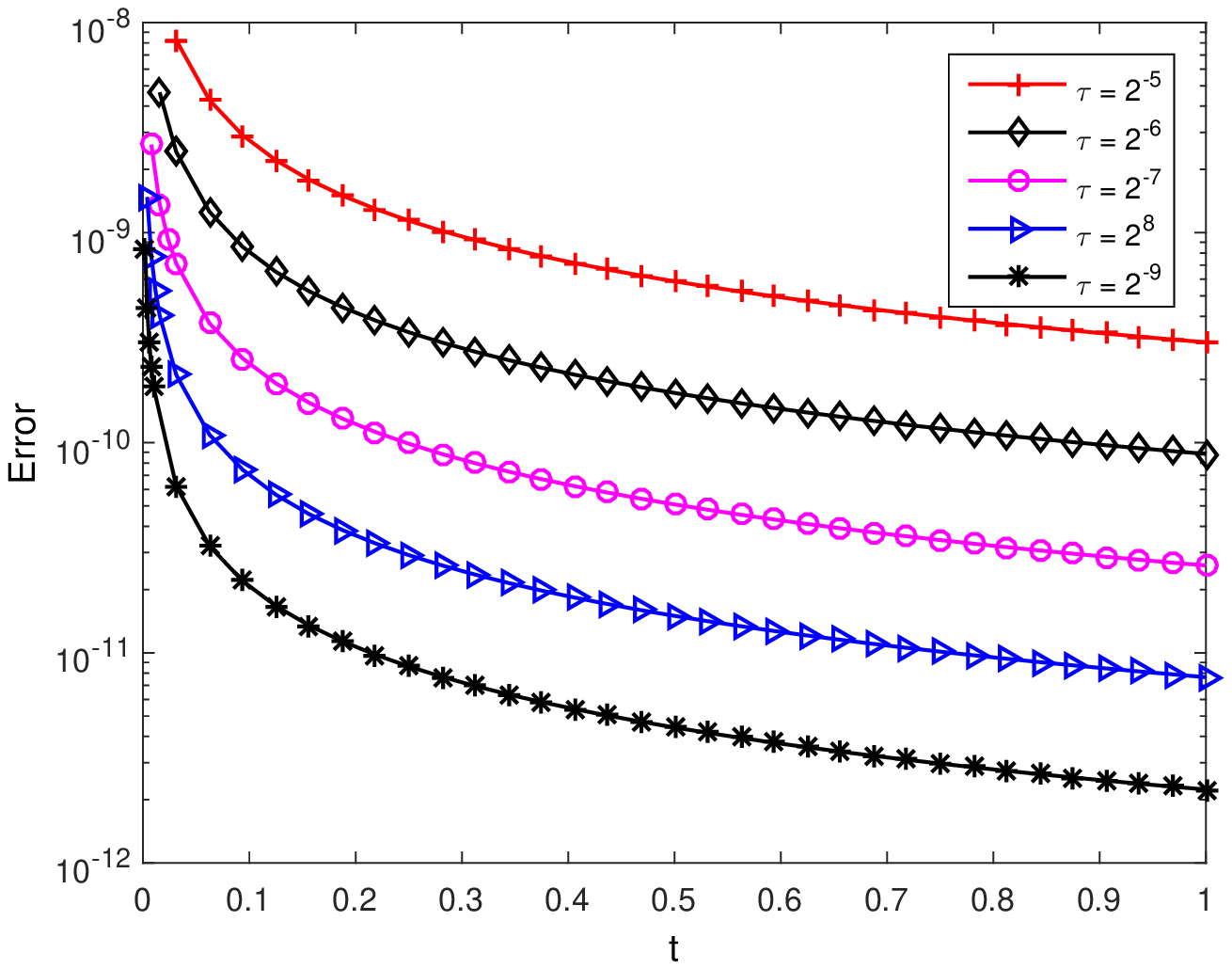,width=5.5cm}  \par{(b)   $m=12$.}
\end{minipage}
\begin{minipage}{0.45\textwidth}\centering
\epsfig{figure=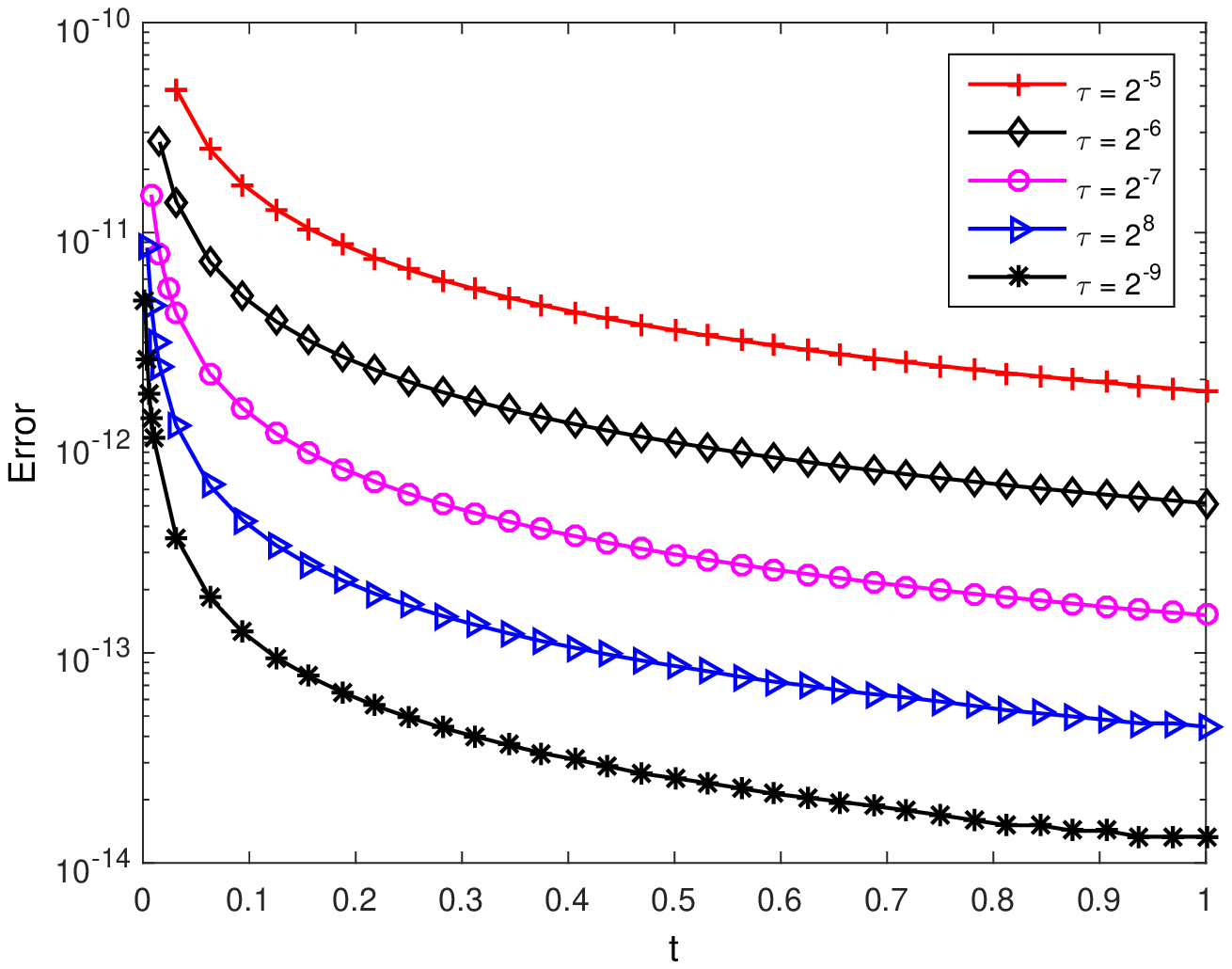,width=5.5cm}   \par{(c) $m=14$.}
\end{minipage}
\begin{minipage}{0.45\textwidth}\centering
\epsfig{figure=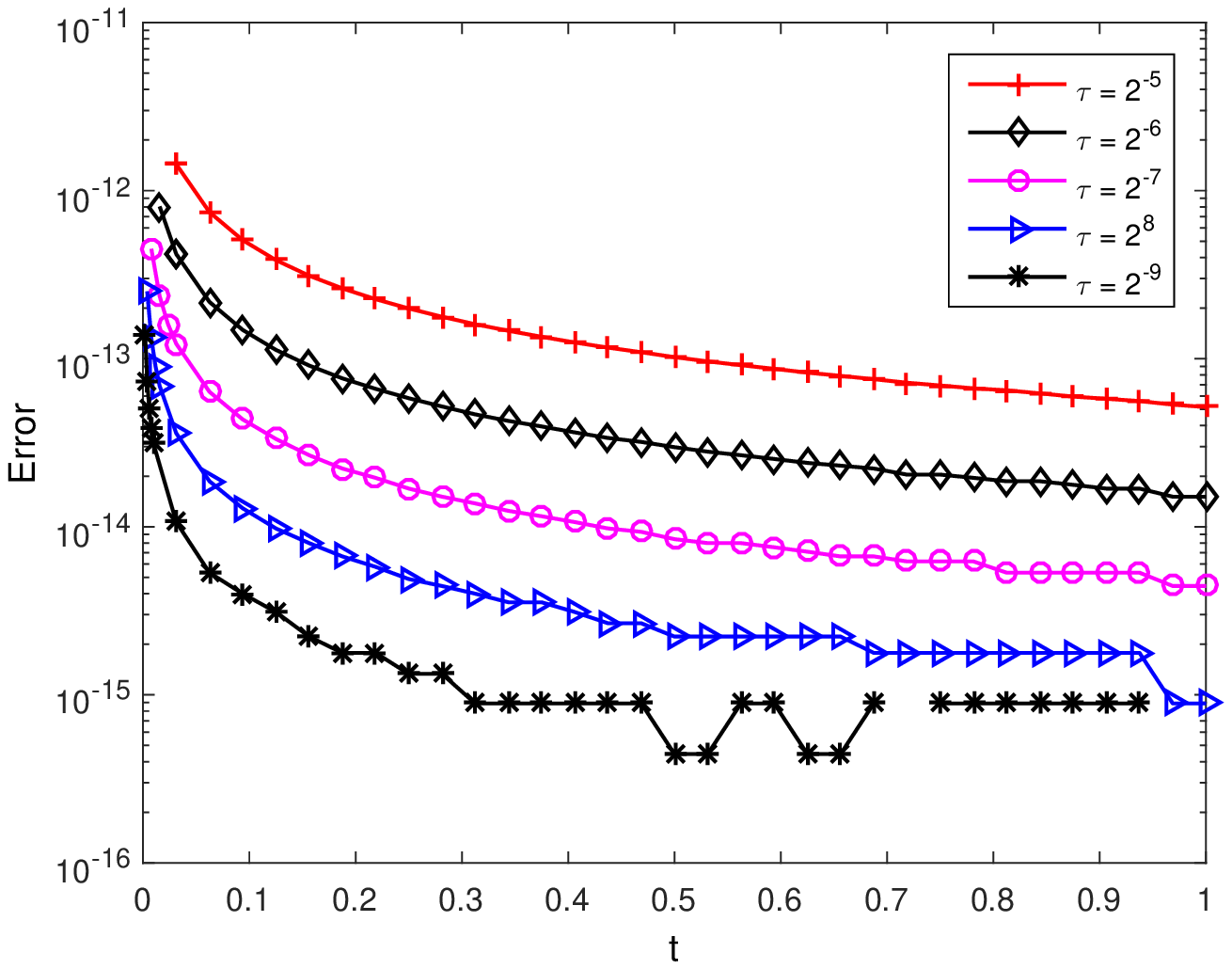,width=5.5cm}   \par{(d) $m=15$.}
\end{minipage}
\end{center}
\caption{Pointwise errors of Example \ref{appdx-eg-1},  $\alpha=0.05$.\label{appdx-fig3}}
\end{figure}

\subsection{Proofs of Theorems \ref{thm:stability} and \ref{thm:convergence}}
We prove the stability and convergence analysis of
LGSEM  \eqref{scheme2-1}--\eqref{scheme2-3}.
We first introduce a lemma.
\begin{lemma}[Gronwall's inequality \cite{QuaVal94}]\label{lm4.3}
Suppose that $\{k_n\}$, $\{\phi_n\}$ and $\{p_n\}$ are nonnegative sequence. Let $A\geq0$ and
$\phi_n$ satisfies
$$\phi_n\leq A + \sum_{j=0}^{n-1}p_j + \sum_{j=0}^{n-1}k_j\phi_j,{\quad}n\geq0.$$
Then we have
$\phi_n\leq \left(A+\sum_{j=0}^{n-1}p_j\right)\exp\left(\sum_{j=0}^{n-1}k_j\right).$
\end{lemma}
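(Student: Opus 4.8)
The statement is the standard discrete Gronwall inequality, so the plan is a short induction on $n$ after a cosmetic reduction. First I would set $B_n = A + \sum_{j=0}^{n-1} p_j$, so the hypothesis reads $\phi_n \le B_n + \sum_{j=0}^{n-1} k_j \phi_j$ and the desired conclusion is $\phi_n \le B_n \exp\!\left(\sum_{j=0}^{n-1} k_j\right)$. Since $p_j \ge 0$, the sequence $\{B_n\}$ is nondecreasing, a fact that will be used crucially below. Because $1 + x \le e^x$ for $x \ge 0$, it is enough to prove the sharper bound
\[
\phi_n \le B_n \prod_{j=0}^{n-1}(1+k_j), \qquad n \ge 0,
\]
and then take exponentials factor by factor.

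I would prove this by induction on $n$. The base case $n=0$ is immediate, since the sums are empty and the product is $1$, giving $\phi_0 \le A = B_0$. For the inductive step, assume the bound holds for all indices up to $n-1$; substituting into the hypothesis gives $\phi_n \le B_n + \sum_{j=0}^{n-1} k_j B_j \prod_{i=0}^{j-1}(1+k_i)$, and since $B_j \le B_n$ by monotonicity this is at most $B_n\left(1 + \sum_{j=0}^{n-1} k_j \prod_{i=0}^{j-1}(1+k_i)\right)$.

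The only computation needed is the telescoping identity $1 + \sum_{j=0}^{n-1} k_j \prod_{i=0}^{j-1}(1+k_i) = \prod_{j=0}^{n-1}(1+k_j)$: writing $P_m = \prod_{i=0}^{m-1}(1+k_i)$ one has $P_{m+1} - P_m = k_m P_m$, and summing over $m = 0, \dots, n-1$ yields $P_n - 1 = \sum_{j=0}^{n-1} k_j P_j$, which is exactly this identity. This closes the induction, and bounding $1+k_j \le e^{k_j}$ then gives $\phi_n \le B_n \exp\!\left(\sum_{j=0}^{n-1} k_j\right)$, which is the claim.

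There is no genuine obstacle here; the single subtle point is that inside the sum the inductive hypothesis must be estimated using $B_n$ rather than $B_j$, which is precisely where nonnegativity of $\{p_j\}$ (hence monotonicity of $\{B_n\}$) enters — without it the argument would only control $\phi_n$ in terms of a running maximum of $\{B_j\}$ rather than $B_n$ itself, and the clean exponential form would be lost.
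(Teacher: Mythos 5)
Your proof is correct. Note, however, that the paper does not prove this lemma at all: it is quoted verbatim from the reference \cite{QuaVal94} (Quarteroni and Valli) and used as a black box in the stability argument, so there is no in-paper proof to compare against. Your argument is the standard one and is complete: the reduction to the sharper product bound $\phi_n\leq B_n\prod_{j=0}^{n-1}(1+k_j)$, the induction, and the telescoping identity $P_n-1=\sum_{j=0}^{n-1}k_jP_j$ are all handled correctly, and you correctly identify the one place where nonnegativity of $\{p_j\}$ (monotonicity of $B_n$) and of $\{k_j\}$ are genuinely needed. The product form you prove is in fact slightly sharper than the exponential form stated in the lemma, which you then recover via $1+k_j\leq e^{k_j}$; this is a perfectly standard and self-contained way to supply the missing proof.
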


\subsubsection{Proof of  Theorem \ref{thm:stability}}
 \begin{proof}
 Letting $v=2\tau v_N^{n+\mfrac}$ in \eqref{scheme2-1} and $u=2\mu\tau u_N^{n+\mfrac}$ in \eqref{scheme2-2},
and eliminating the intermediate term $2\mu\tau\,(\px u_N^{n+\mfrac},\px v_N^{n+\mfrac})$,
we obtain
\begin{eqnarray}
&&\|v_N^{n+1}\|^2-\|v_N^{n}\|^2+2{\nu}\tau^{1-\alpha}
\sum_{k=0}^ng^{(\alpha)}_{k}(v_N^{n-k+\mfrac},v_N^{n+\mfrac})
+\mu(\|\px[x]u_N^{n+1}\|^2-\|\px[x]u_N^{n}\|^2)\nonumber\\
&=&2{\nu}\tau^{1-\alpha}\bigg[C_n(v_N^{0},v_N^{n+\mfrac})-\sum_{r=1}^{m_3}W_{n,r}(v_N^r-v_N^{0},v_N^{n+\mfrac})\bigg]
-\sum_{r=1}^{m_2}v_{n,r}(v_N^r-v_N^{0},v_N^{n+\mfrac})\nonumber\\
&&-2\mu\sum_{r=1}^{m_1}u_{n,r}(\px[x](u_N^r-u_N^{0}-t_rv_N^{0}),\px[x]u_N^{n+\mfrac})
+2\tau(I_Nf^{n+\mfrac},v_N^{n+\mfrac}),\label{scheme2-1-3}
\end{eqnarray}
where $C_n$ and $W_{n,r}(r=1,2)$ are    defined by
\begin{eqnarray}
C_n&=&\sum_{k=0}^{n}g^{(\alpha)}_{k}=O(n^{-\alpha}),\label{c-n} \\
W_{n,r}&=&\frac{1}{2}(w_{n,r}^{(\alpha)}+w_{n+1,r}^{(\alpha)}),\label{w-n}
\end{eqnarray}
in which $\{w_{n,r}^{(\alpha)}\}$ are defined by \eqref{s31-8}
{with $\sigma_r$ replaced by $\sigma_r-1$}.
Here $C_n=O(n^{-\alpha})$ can be derived
from the  fact $\sum_{k=0}^n\omega^{(\alpha)}_k=O(n^{-\alpha})$
(see Lemma 3.4 in \cite{ZengLLT15})
and the definition of $g^{(\alpha)}_{k}$  {(see \eqref{g-k})}.
Also, $|W_{n,r}|\leq Cn^{\sigma_{m_3}-3-\alpha}$ can be derived from Lemma \ref{lm4.2}
with $\sigma_m=\sigma_{m_3}-1$  {in \eqref{s4:eq-1}}.

Summing up $n$ from 0 to $K$ and applying Lemma \ref{lm4.1}, we obtain
\begin{eqnarray}
\|v_N^{K+1}\|^2 &+&\mu\|\px[x]u_N^{K+1}\|^2\leq \|v_N^{0}\|^2 +\mu\|\px[x]u_N^{0}\|^2
+2\tau\sum_{n=0}^K(I_Nf^{n+\mfrac},v_N^{n+\mfrac})  \nonumber\\
&&+2{\nu}\tau^{1-\alpha}\sum_{n=0}^KC_n(v_N^{0},v_N^{n+\mfrac})
-2{\nu}\tau^{2-\alpha} \sum_{n=0}^K \sum_{r=1}^{m_3}W_{n,r}
\left((v_N^r-v_N^{0})/\tau,v_N^{n+\mfrac}\right)\nonumber \\
&&-\tau\sum_{n=0}^K\sum_{r=1}^{m_2}v_{n,r}\left((v_N^r-v_N^{0})/\tau,v_N^{n+\mfrac}\right)
\nonumber\\
&&-2\mu\tau\sum_{n=0}^K\sum_{r=1}^{m_1}u_{n,r}
\left(\px[x](u_N^r-u_N^{0})/\tau-r\px[x]v_N^{0},\px[x]u_N^{n+\mfrac}\right).\label{scheme2-1-4}
\end{eqnarray}
Applying the Cauchy--Schwarz inequality yields
\begin{eqnarray}
&&\|v_N^{K+1}\|^2 +\mu\|\px[x]u_N^{K+1}\|^2\leq \|v_N^{0}\|^2 +\mu\|\px[x]u_N^{0}\|^2
+C\tau\sum_{n=0}^K\left(\|f^{n+\mfrac}\|^2+\|v_N^{n+\mfrac}\|^2\right)\nonumber\\
&&+C\tau^{1-\alpha}\sum_{n=0}^K\left[|C_n|\left(\|v_N^{0}\|^2+\|v_N^{n+\mfrac}\|^2\right)
+ \tau  \sum_{r=1}^{m_3}|W_{n,r}|
\left(\|(v_N^r-v_N^{0})/\tau\|^2+\|v_N^{n+\mfrac}\|^2\right)\right]\nonumber\\
&&+C\tau\sum_{n=0}^K  \sum_{r=1}^{m_2}|v_{n,r}|
\left(\|(v_N^r-v_N^{0})/\tau\|^2+\|v_N^{n+\mfrac}\|^2\right)\nonumber\\
&&+C\tau\sum_{n=0}^K\sum_{r=1}^{m_1}|u_{n,r}|
\left(\|\px[x](u_N^r-u_N^{0})/\tau\|^2+\|\px[x]v_N^{0}\|^2+\|\px[x]u_N^{n+\mfrac}\|^2\right),\label{scheme2-1-6}
\end{eqnarray}
where $C$ is a positive constant independent of $\tau,h,n,k$ and $K$.
For simplicity, we denote
\begin{equation}\label{scheme2-1-6-2}\begin{aligned}
S_1^K=\sum_{n=1}^K n^{\sigma_{m_1}-3},{\quad}S_2^K = \sum_{n=1}^Kn^{\sigma_{m_2}-4},
{\quad} S_3^K= \sum_{n=1}^Kn^{\sigma_{m_3}-3-\alpha}.
\end{aligned}\end{equation}
Then we   obtain
\begin{equation}\label{scheme2-1-6-3}\begin{aligned}
S_1^K=&\sum_{n=1}^K n^{\sigma_{m_1}-3}\leq C\max\{1,K^{\sigma_{m_1}-2}\},
{\quad}S_2^K = \sum_{n=1}^Kn^{\sigma_{m_2}-4}\leq C\max\{1,K^{\sigma_{m_2}-3}\},\\
 S_3^K=& \sum_{n=1}^Kn^{\sigma_{m_3}-3-\alpha}\leq C\max\{1,K^{\sigma_{m_3}-2-\alpha}\}.
\end{aligned}\end{equation}
{Note from  \eqref{c-n}, \eqref{w-n}, and  Lemma \ref{lm4.2} that
\begin{eqnarray}\label{scheme2-1-5}
|W_{n,r}| &\leq& Cn^{\sigma_{m_3}-3-\alpha}, \,|v_{n,r}| \leq  Cn^{\sigma_{m_2}-4},\,
|u_{n,r}| \leq  Cn^{\sigma_{m_1}-3},  \, C_n = O(n^{-\alpha}).
\end{eqnarray}
Hence, we derive from  \eqref{scheme2-1-6}--\eqref{scheme2-1-6-3} }
\begin{eqnarray*}
&&\|v_N^{K+1}\|^2 +\mu\|\px[x]u_N^{K+1}\|^2
\leq C(\|v_N^{0}\|^2 +\mu\|\px[x]u_N^{0}\|^2+\|\px[x]v_N^{0}\|^2)\nonumber\\
&&+C\sum_{n=1}^{K+1}\Big(\tau^{1-\alpha}n^{-\alpha}+\tau
+\tau^{2-\alpha}n^{\sigma_{m_3}-3-\alpha}
+\tau  n^{\sigma_{m_2}-4}  \Big)\|v_N^{n}\|^2 \nonumber\\
&&+C\tau\sum_{n=1}^{K+1} n^{\sigma_{m_1}-3}\|\px[x]u_N^{n}\|^2
+ C\tau^{1-\alpha}\|v_N^{0}\|^2\sum_{n=0}^K|C_n|+C\tau\sum_{n=0}^{K+1}\|f^{n}\|^2\nonumber\\
&&+C \tau\sum_{r=1}^{m_2} \|\delta_tv_N^{r-\mfrac}\|^2S_2^K
+C \tau^{2-\alpha}\sum_{r=1}^{m_3} \|\delta_tv_N^{r-\mfrac}\|^2S_3^K
+C \tau\sum_{r=1}^{m_1} \|\px[x]\delta_tu_N^{r-\mfrac}\|^2S_1^K.\label{scheme2-1-7}
\end{eqnarray*}
For small enough $\tau$, using the assumption $\sigma_{m_1}\leq 3$,
$\sigma_{m_2},\sigma_{m_3}\leq 4$, and \eqref{scheme2-1-6-3},
we can get from the above inequality
\begin{equation}\label{scheme2-1-8}
\|v_N^{K+1}\|^2 +\mu\|\px[x]u_N^{K+1}\|^2\leq
\rho^K+ C\sum_{n=1}^{K}d_n(\|v_N^{n}\|^2+\mu\|\px[x]u_N^{n}\|^2),
\end{equation}
where
$d_n= \tau^{1-\alpha}n^{-\alpha}+\tau+\tau^{2-\alpha} n^{\sigma_{m_3}-3-\alpha}
+\tau  n^{\sigma_{m_2}-4} + \tau n^{\sigma_{m_1}-3}$, and
\begin{eqnarray}
\rho^K=& C\bigg(\|v_N^{0}\|^2 +\mu\|\px[x]u_N^{0}\|^2+\|\px[x]v_N^{0}\|^2
+ \tau\sum_{r=1}^{m_1} \|\px[x]\delta_tu_N^{r-\mfrac}\|^2S_1^K\nonumber\\
&+\tau\sum_{r=1}^{m_2}\|\delta_tv_N^{r-\mfrac}\|^2 S_2^K
+\tau^{2-\alpha}\sum_{r=1}^{m_3}\|\delta_tv_N^{r-\mfrac}\|^2 S_3^K
+\tau\sum_{n=0}^{K+1}\|f^{n}\|^2\bigg).\label{scheme2-1-9}
\end{eqnarray}
Applying Gronwall's inequality (Lemma \ref{lm4.3}) yields
\begin{equation}\label{scheme2-1-10}\begin{aligned}
&\|v_N^{K+1}\|^2 +\mu\|\px[x]u_N^{K+1}\|^2\leq
\rho^K\exp\left(C\sum_{n=1}^Kd_n\right).
\end{aligned}\end{equation}
Using the condition $\sigma_{m_1}\leq 3$ and $\sigma_{m_2},\sigma_{m_3}\leq 4$
leads to
\begin{equation}\label{scheme2-1-11}\begin{aligned}
\sum_{n=1}^Kd_n\leq C(T^{1-\alpha}+T^{2-\alpha}+T+1).
\end{aligned}\end{equation}
Applying \eqref{scheme2-1-6-3},
$\sigma_{m_1}\leq 3$ and $\sigma_{m_2},\sigma_{m_3}\leq 4$  yields
\begin{eqnarray}
\rho^K&\leq& C(\|v_N^{0}\|^2 +\mu\|\px[x]u_N^{0}\|^2+\|\px[x]v_N^{0}\|^2)
+C\sum_{r=1}^{m_1} \|\px[x]\delta_tu_N^{r-\mfrac}\|^2\nonumber\\
&&+C \sum_{r=1}^{m_2}\|\delta_tv_N^{r-\mfrac}\|^2+C \sum_{r=1}^{m_3}\|\delta_tv_N^{r-\mfrac}\|^2
+C\tau\sum_{n=0}^{K+1}\|f^{n}\|^2.\label{scheme2-1-12}
\end{eqnarray}
Combining \eqref{scheme2-1-10}--\eqref{scheme2-1-12}
reaches the conclusion.
This completes the proof.
 \end{proof}

\subsubsection{Proof of  Theorem \ref{thm:convergence}}
We now focus on the convergence of the scheme \eqref{scheme2-1}--\eqref{scheme2-3}.
Introduce the projector $P_N^{1,0}$: $H^1_0(\Omega)\to V_N^0$ as
\begin{equation}
(\px (P_N^{1,0}u-u),\px v)=0,{\quad}u\in H^1_0(\Omega),{\quad}\forall v \in V_N^0.
\end{equation}
The properties of the interpolation and  projection operators are listed below.
\begin{lemma}[\cite{CanHusetal-B06}]\label{lem2-1}
If $u{\,\in\,}{H^{r}(\Omega)},\,r{\,\geq\,}0$, then
we have
\begin{equation} \nonumber
\big\|{\px^{l}(u-I_Nu)}\big\|
{\leq}Ch^{r-l}\|{u}\|_{H^{r}(\Omega)},{\quad}l=0,1.
\end{equation}
\end{lemma}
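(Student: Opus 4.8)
The statement is a purely approximation-theoretic estimate for the piecewise Legendre--Gauss--Lobatto interpolant, and I would treat it as a standard result in spectral/$hp$-element theory, proved by a local-to-global argument. Since $I_N$ acts elementwise --- on $\Omega_i$ the restriction $(I_Nu)|_{\Omega_i}$ is the degree-$N_i$ Legendre--Gauss--Lobatto interpolant of $u|_{\Omega_i}$ --- it suffices to establish, on each subinterval, the bound $\|\px^l(u-I_Nu)\|_{L^2(\Omega_i)}\le C\,(h_i/N_i)^{r-l}|u|_{H^r(\Omega_i)}$ for $l=0,1$, then square, sum over $i$, use $\sum_i|u|_{H^r(\Omega_i)}^2=|u|_{H^r(\Omega)}^2\le\|u\|_{H^r(\Omega)}^2$, and take $h=\max_i h_i/N_i$. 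First I would pass to the reference interval $\Lambda=(-1,1)$ via the affine map $x=x_{i-1}+\tfrac{h_i}{2}(\xi+1)$; a function $v$ on $\Omega_i$ corresponds to $\hat v$ on $\Lambda$ with $\|\px^l v\|_{L^2(\Omega_i)}^2=(h_i/2)^{1-2l}\|\px[\xi]^l\hat v\|_{L^2(\Lambda)}^2$ and $|\hat v|_{H^k(\Lambda)}^2=(h_i/2)^{2k-1}|v|_{H^k(\Omega_i)}^2$. Because the GLL interpolation operator commutes with this affine change of variables, the whole problem reduces to the one-dimensional estimate
\[
\|\px[\xi]^l(\hat v-\hat I_N\hat v)\|_{L^2(\Lambda)}\le C\,N^{\,l-r}\|\hat v\|_{H^r(\Lambda)},\qquad 0\le l\le 1\le r,
\]
and tracking the powers of $h_i$ through the two changes of variables reproduces exactly the factor $(h_i/N_i)^{r-l}$.

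For the reference estimate I would use the classical spectral approximation toolkit. Let $\Pi_N\hat v$ be the truncation of the Legendre expansion of $\hat v$, so that the Jackson-type bound $\|\px[\xi]^l(\hat v-\Pi_N\hat v)\|_{L^2(\Lambda)}\le C\,N^{l-r}\|\hat v\|_{H^r(\Lambda)}$ holds. Since $\Pi_N\hat v\in\mathbb{P}_N(\Lambda)$ is fixed by $\hat I_N$, write $\hat v-\hat I_N\hat v=(\hat v-\Pi_N\hat v)-\hat I_N(\hat v-\Pi_N\hat v)$. The first term is controlled by the projection estimate; for the second I would use the stability of $\hat I_N$, namely the aliasing bound $\|\hat I_N w\|_{L^2(\Lambda)}\le C\|w\|_{H^s(\Lambda)}$ valid for any fixed $s>1/2$ with $C$ independent of $N$ (this follows from the equivalence of the discrete GLL inner product and the $L^2$ inner product on $\mathbb{P}_N$ together with a bound on the GLL quadrature error for $H^s$ integrands). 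Applying it to $w=\hat v-\Pi_N\hat v$ and using the projection estimate for its $H^s$ norm gives the $l=0$ case; for $l=1$ one additionally invokes the inverse inequality $\|\px[\xi]p\|_{L^2(\Lambda)}\le C\,N^2\|p\|_{L^2(\Lambda)}$ on $\mathbb{P}_N$, applied to $p=\hat I_N(\hat v-\Pi_N\hat v)$, and balances the $N^2$ against the extra decay in the $L^2$ projection bound to recover the $N^{1-r}$ rate.

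The main obstacle is precisely this control of the interpolation operator on functions of low Sobolev regularity: one cannot simply use $L^2$-boundedness of $\hat I_N$, since its $L^2$ operator norm grows with $N$, and instead one must invoke the fractional-order stability estimate $\|\hat I_N w\|_{L^2(\Lambda)}\le C\|w\|_{H^s(\Lambda)}$ for $s>1/2$ with $C$ independent of $N$. Everything else --- the affine scaling bookkeeping, the Legendre-truncation estimates, the inverse inequality, and the final summation over elements --- is routine. Since all of these ingredients are assembled in the cited monograph, in the paper I would simply appeal to \cite{CanHusetal-B06}.
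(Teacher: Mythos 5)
The paper offers no proof of this statement at all: Lemma \ref{lem2-1} is quoted verbatim from the monograph \cite{CanHusetal-B06} and used as a black box, so there is nothing internal to compare your argument against. Your sketch is the standard textbook route to this estimate (elementwise reduction, affine scaling to the reference interval, comparison with the truncated Legendre expansion, stability of the Gauss--Lobatto interpolant, inverse inequality), and since you too ultimately defer to the cited reference, you are in effect taking the same approach as the paper.

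One technical wrinkle in your sketch is worth flagging, though it does not invalidate the strategy. If you control $\hat I_N(\hat v-\Pi_N\hat v)$ via the stability bound $\|\hat I_N w\|_{L^2(\Lambda)}\leq C\|w\|_{H^s(\Lambda)}$ with a fixed $s>1/2$ and then estimate $\|\hat v-\Pi_N\hat v\|_{H^s(\Lambda)}\leq CN^{s-r}\|\hat v\|_{H^r(\Lambda)}$, you end up with $N^{s-r}$ rather than the optimal $N^{-r}$, i.e.\ you lose a factor $N^{s}$. The standard fix is to use the scaled stability estimate $\|\hat I_N w\|_{L^2(\Lambda)}\leq C\left(\|w\|_{L^2(\Lambda)}+N^{-1}\|\partial_\xi w\|_{L^2(\Lambda)}\right)$, which applied to $w=\hat v-\Pi_N\hat v$ gives $C(N^{-r}+N^{-1}\cdot N^{1-r})\|\hat v\|_{H^r(\Lambda)}=CN^{-r}\|\hat v\|_{H^r(\Lambda)}$ and recovers the optimal rate; the $l=1$ case then follows with the inverse inequality as you describe. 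A second, minor point: the hypothesis $r\geq 0$ in the statement is too weak for the interpolant even to be defined (one needs $u\in C(\bar\Omega)$, hence $r>1/2$ in one dimension), and your reference estimate correctly assumes $r\geq 1$; this looseness is inherited from how the lemma is stated in the paper, not a defect of your argument.
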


\begin{lemma}[\cite{CanHusetal-B06}]\label{lem2-2}
If $u{\,\in\,}{H^{r}(\Omega)}\cap{H^1_0(\Omega)},\,r{\,\geq\,}1$,
then
\begin{equation} \nonumber
\big\|{\px^{l}(u-P_N^{1,0}u)}\big\|
{\leq}Ch^{r-l}\|{u}\|_{H^{r}(\Omega)},{\quad}l=0,1.
\end{equation}
\end{lemma}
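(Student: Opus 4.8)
The plan is to reduce the estimate to the interpolation bound already stated in Lemma~\ref{lem2-1}, using the optimality of the $H^1_0$-projection for the case $l=1$ and an Aubin--Nitsche duality argument for $l=0$.

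First I would handle $l=1$. Since $r\geq 1$, the one-dimensional Sobolev embedding gives $u\in C(\bar\Omega)$, so the Legendre--Gauss--Lobatto interpolant $I_Nu$ is well defined; because the interpolation nodes include the endpoints $a,b$, where $u\in H^1_0(\Omega)$ vanishes, one has $I_Nu\in V_N^0$. The defining relation $(\px(P_N^{1,0}u-u),\px v)=0$ for all $v\in V_N^0$ says that $P_N^{1,0}u$ is the $\px$-orthogonal projection onto $V_N^0$, hence the best $H^1$-seminorm approximation of $u$ from $V_N^0$: expanding $\|\px(u-w)\|^2$ around $P_N^{1,0}u$ and using the orthogonality kills the cross term. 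Therefore $\|\px(u-P_N^{1,0}u)\|=\inf_{v\in V_N^0}\|\px(u-v)\|\leq\|\px(u-I_Nu)\|\leq Ch^{r-1}\|u\|_{H^r(\Omega)}$ by Lemma~\ref{lem2-1} with $l=1$.

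For $l=0$, set $e=u-P_N^{1,0}u\in H^1_0(\Omega)$ and let $\phi\in H^2(\Omega)\cap H^1_0(\Omega)$ solve the dual problem $-\px^2\phi=e$ in $\Omega$ with $\phi|_{\partial\Omega}=0$; one-dimensional elliptic regularity on the interval yields $\|\phi\|_{H^2(\Omega)}\leq C\|e\|$. Integrating by parts (the boundary term vanishes because $e\in H^1_0$) and then subtracting $v=I_N\phi\in V_N^0$ using the orthogonality of $P_N^{1,0}$, one gets $\|e\|^2=(e,-\px^2\phi)=(\px e,\px\phi)=(\px e,\px(\phi-I_N\phi))\leq\|\px e\|\,\|\px(\phi-I_N\phi)\|\leq Ch\|\px e\|\,\|\phi\|_{H^2(\Omega)}\leq Ch\|\px e\|\,\|e\|$, where Lemma~\ref{lem2-1} with $r=2,\ l=1$ was used for $I_N\phi$. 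Dividing by $\|e\|$ and inserting the $l=1$ bound gives $\|e\|\leq Ch\|\px e\|\leq Ch^r\|u\|_{H^r(\Omega)}$.

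The main obstacle is really just bookkeeping: one must check that $I_Nu$ (and $I_N\phi$) inherit the homogeneous boundary condition so that they are admissible comparison functions, and that the one-dimensional Dirichlet Laplacian enjoys full $H^2$-regularity with a constant independent of the mesh. Both are standard facts, which is why the statement is simply cited from~\cite{CanHusetal-B06}; the approximation-theoretic content is entirely carried by Lemma~\ref{lem2-1}.
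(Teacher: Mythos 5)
Your proof is correct. The paper does not prove this lemma at all---it is quoted verbatim from \cite{CanHusetal-B06}---and your argument (best approximation of $P_N^{1,0}$ in the $H^1$ seminorm for $l=1$, plus an Aubin--Nitsche duality step for $l=0$, with the Legendre--Gauss--Lobatto interpolant of Lemma \ref{lem2-1} serving as the comparison function in both cases) is precisely the standard derivation behind the cited result; you also correctly flagged and resolved the only delicate points, namely that $I_Nu$ and $I_N\phi$ belong to $V_N^0$ because the interpolation nodes include the endpoints where these functions vanish, and that one-dimensional elliptic regularity on an interval gives the full $H^2$ bound $\|\phi\|_{H^2(\Omega)}\leq C\|e\|$ needed to close the duality estimate.
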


Denote by $e_u=u_N-P_N^{1,0}U$, $\eta_u=U-P_N^{1,0}U$,
$e_v=v_N-P_N^{1,0}V$, and $\eta_v=V-P_N^{1,0}V$. Then we get the error equation of  \eqref{scheme2-1}--\eqref{scheme2-3}
as follows
\begin{eqnarray}
&&(\delta_te_v^{n+\mfrac},v)+\frac{1}{\tau}\sum_{r=1}^{m_2} v_{n,r}(e_v^r-e_v^0,v)
+\frac{\nu}{2}\left[\left(\mathcal{{A}}_{0,-1}^{\alpha,n+1,m_3}(e_v-e_v^0),v\right)
+\left(\mathcal{{A}}_{0,-1}^{\alpha,n,m_3}(e_v-e_v^0),v\right)\right]\nonumber\\
&&{\qquad\qquad\quad}+\mu\,(\px e_u^{n+\mfrac},\px v)=(G^n,v),\label{err-3}\\
&&(\delta_t\px[x]e_u^{n+\mfrac},\px[x]u)
+\frac{1}{\tau}\sum_{r=1}^{m_1} u_{n,r}(\px[x](e_u^r-e_u^0-t_re_v^0),v)
- (\px e_v^{n+\mfrac},\px u)=(H^n,u),\label{err-4}
\end{eqnarray}
where $u,v\in V_N^0$,  $H^n=O(\tau^2t_n^{\sigma_{m_1+1}-3})$,  and
\begin{eqnarray*}
G^n&=&\delta_t\eta_v^{n+\mfrac}
+\frac{1}{\tau}\sum_{r=1}^{m_2} v_{n,r}(\eta_v^r-\eta_v^0)+\frac{\nu}{2}
\left(\mathcal{{A}}_{0,-1}^{\alpha,n+1,m_3}(\eta_v-\eta_v^0)
+\mathcal{{A}}_{0,-1}^{\alpha,n,m_3}(\eta_v-\eta_v^0)\right)\\
&&+I_Nf^{n+1}-f^{n+1}+O(\tau^2t_n^{\sigma_{m_2+1}-4})+O(\tau^2t_n^{\sigma_{m_3+1}-3-\alpha}).
\end{eqnarray*}
\begin{proof}
From Theorem \ref{thm:stability}, \eqref{err-3}--\eqref{err-4}, and
the properties $e_v^0=e_u^0=0$, we can similarly derive
\begin{equation}\label{s5:eq-11}\begin{aligned}
\|e_v^{n}\|^2 +\mu\|\px[x]e_u^{n}\|^2\leq& C R^{m_1,m_2,m_3}(e)
+C\tau\sum_{k=0}^{n}\left(\|G^{k}\|^2+\|H^k\|^2\right),
\end{aligned}\end{equation}
where $$R^{m_1,m_2,m_3}(e)=\sum_{k=1}^{m_1} \|\px[x]\delta_te_u^{k-\mfrac}\|^2
+ \sum_{k=1}^{m_2} \|\delta_te_v^{k-\mfrac}\|^2+ \sum_{k=1}^{m_3} \|\delta_te_v^{k-\mfrac}\|^2.$$


It is easy to obtain $\|H^n\|\leq Cn^{\sigma_{m_1+1}-3}\tau^{\sigma_{m_1+1}-1}$ and
$$\|G^n\|\leq C\left(h^r
+n^{\sigma_{m_2+1}-4}\tau^{\sigma_{m_2+1}-2}
+n^{\sigma_{m_3+1}-3-\alpha}\tau^{\sigma_{m_3+1}-1-\alpha}\right).$$

From \eqref{s5:eq-11}  and the boundedness of $\|G^n\|$ and $\|H^n\|$,
we derive
\begin{eqnarray}\label{s5:eq-12}
\|e_v^{n}\|^2 &+&\mu\|\px[x]e_u^{n}\|^2\leq C\bigg(  h^{2r}
+R^{m_1,m_2,m_3}(e)+\tau^{2\sigma_{m_1+1}-1}\sum_{k=1}^{n}k^{2(\sigma_{m_1+1}-3)}\notag\\
&&+\tau^{2\sigma_{m_2+1}-3}\sum_{k=1}^{n}k^{2(\sigma_{m_2+1}-4)}
+\tau^{2\sigma_{m_3+1}-1-2\alpha}\sum_{k=1}^{n}k^{2(\sigma_{m_2+1}-3-\alpha)}\bigg).
 \end{eqnarray}
Using the assumption  $\sigma_{m_1}\leq 3,\sigma_{m_2},\sigma_{m_3}\leq4$ and
applying the property $\sum_{k=1}^nk^{\beta}\leq C\max\{1,n^{1+\beta}\},\beta\in \mathbb{R},\beta\neq-1$,
we obtain
\begin{eqnarray*}
\|e_v^{n}\|^2 &&+\mu\|\px[x]e_u^{n}\|^2
\leq C\Big(h^{2r}+R^{m_1,m_2,m_3}(e)
 +\tau^{\min\{4,2{\sigma_{m_1+1}-1},2\sigma_{m_2+1}-3,2\sigma_{m_3+1}-1-2\alpha\}}
\Big).\label{s5:eq-13}
\end{eqnarray*}

We can get
$R^{m_1,m_2,m_3}(e) \leq C (h^{2r-2}+\tau^{\min\{4,2{\sigma_{m_1+1}-1},2\sigma_{m_2+1}-3,2\sigma_{m_3+1}-1-2\alpha\}} )$ from the assumption, hence
\begin{equation*}\begin{aligned}
\max\{\|e_v^{n}\|, \|\px[x]e_u^{n}\|\}\leq& C\Big(
\tau^{\min\{2,{\sigma_{m_1+1}-0.5},\sigma_{m_2+1}-1.5,\sigma_{m_3+1}-0.5-\alpha\}}+ h^{r-1}\Big).
\end{aligned}\end{equation*}
Applying the triangle inequality
$\|\px(u_N^{n}-U(t_n))\|\leq \|\px e_u^{n}\|+\|\px \eta_u^{n}\|$ and Lemma \ref{lem2-2}
yields the desired result.
\end{proof}

\subsection{Multi-term time-fractional subdiffusion equation}
Consider the following multi-term time-fractional subdiffusion equation
\begin{equation}\label{subeq}
\left\{\begin{aligned}
&{}_{C}D^{\alpha_1}_{0,t}U+\nu\,{}_{C}D^{\alpha_2}_{0,t}U=\mu\,\px^2U+f(x,t),
{\quad}(x,t){\,\in\,}\Omega{\times}(0,T],T>0,\\
&U(x,0)=\phi_0(x),{\quad}x{\,\in\,}\bar{\Omega},\\
&U(x,t)=0,{\quad}(x,t)\in\partial\Omega\times(0,T],
\end{aligned}\right.
\end{equation}
where $\Omega=(a,b)$, $0<\alpha_1,\alpha_2\leq 1$, $\nu\geq0$, and $\mu>0$.
Here, we  extend  LGSEM \eqref{scheme2-1}--\eqref{scheme2-3}  to solve
\eqref{subeq}, which can be easily  extended to
more generalized multi-term time-fractional subdiffusion equations,
see e.g. \cite{JinLaz-etal15,Liu-etal13,RenSun14}.
We directly present the LGSEM   for \eqref{subeq} as:
Find $u_N^{n}\in V_N^0$ for $n=1,2,...,n_T$,  such that
\begin{eqnarray}
&&\left(\mathcal{{A}}_{0,-1}^{{\alpha_1},n,m_1} \hat{u}_N ,v\right)
+\nu\left(\mathcal{{A}}_{0,-1}^{{\alpha_2},n,m_2} \hat{u}_N ,v\right)
+\mu(\px u_N^{n},\px v)=(I_Nf^{n},v),\,\,\forall v\in V_N^0, \label{scheme3-1}\\
&&u_N^0=P_N^{1,0}U(0).\label{scheme3-3}
\end{eqnarray}
where $\hat{u}_N^{k}=u_N^{k}-u_N^0\,(k\geq 0)$, $m_1,m_2$ are suitable positive integers,
and $\mathcal{{A}}_{0,-1}^{\alpha_k,n,m_k}\,(k=1,2)$
are defined as in \eqref{s31-1-3}.

The  convergence of the scheme \eqref{scheme3-1}--\eqref{scheme3-3}  can be similarly
proven as that of Theorem \ref{thm:convergence}, which is given by
$$\left(\tau\sum_{k=0}^n\|\px (u_N^{k}-U(t_k))\|^2\right)^{1/2}
\leq C\left(\tau^{\min\{2,\sigma_{m_1+1}-{\alpha_1},\sigma_{m_2+1}-{\alpha_2}\}}
+ h^{r-1}\right),$$
where the analytical solution of \eqref{subeq} satisfies
$U(t)-U(0)=\sum_{r=1}^{m}c_rt^{\sigma_r}+u(t)t^{\sigma_{m+1}}$, $u(t)$ is uniformly bounded
for $t\in[0,T]$, $U(t)\in H_0^r(\Omega)$,
 and {$\sigma_{m_1},\sigma_{m_2}\leq 3$}.

\begin{remark}
If $m_1=m_2=0$ in \eqref{scheme3-1}--\eqref{scheme3-3}, then we have
\begin{eqnarray*}
\|\px (u_N^{n}-U(t_n))\|
&\leq& C\left(\tau^{\min\{2,\sigma_{1}-{\alpha_1},\sigma_{1}-{\alpha_2}\}}+ h^{r-1}\right),\\
\left(\tau\sum_{k=0}^n\|\px (u_N^{k}-U(t_k))\|^2\right)^{1/2}
&\leq& C\left(\tau^{\min\{2,\sigma_{1}-{\alpha_1}+0.5,\sigma_{1}-{\alpha_2}+0.5\}}+h^{r-1}\right).
\end{eqnarray*}
\end{remark}

\begin{example}\label{exm:throw-at-far-field}
Consider the following time-fractional subdiffusion equation
\begin{equation}\label{app-sec6:eq2}
{}_{C}D^{\alpha_1}_{0,t}U+{}_{C}D^{\alpha_2}_{0,t}U
=\px^2U+\exp(-t){\sin(\pi x)},{\quad}(x,t){\,\in\,}(0,1){\times}(0,T],T>0
\end{equation}
subject to the homogenous initial and boundary conditions, $\alpha_1=3/4$, and $\alpha_2 =1/2$.
\end{example}

Next, we use the scheme \eqref{scheme3-1}--\eqref{scheme3-3} with $m_1=m_2=m$
to solve \eqref{app-sec6:eq2}.
Here in space, we use two subdomains: $[0,1]=[0,1/2]\cup[1/2,1]$, and $N=(32,32)$. We observe
numerically that the resolution in space is fine enough and the total error is
dominated by errors from time discretization.

It is known in \cite[p. 183]{Diethelm-B10}   that the analytical solution $U(t)$
to \eqref{app-sec6:eq2} satisfies
$U(t)=\sum_{k=1}^{\infty}c_kt^{\sigma_k}$, where $\sigma_k=(2+k)/4$.
As we do not have the explicit form of the solution, we use reference solutions that are obtained
with smaller time stepsize $\tau=2^{-13}$.

In Table \ref{apx-tb2-1}, we observe that
the average  $L^2$ errors    become smaller  when $m$ increases.
We also observe second-order accuracy  when   $m=3$.
In the first column of Table \ref{apx-tb2-1}, we also  list numerical results from the scheme
of applying the L1 method in time \cite{JinLaz-etal15,RenSun14} with spatial discretization by the
spectral element method (L1-SEM). We  observe first-order accuracy of the L1-SEM,
with the corresponding errors much larger than those by our proposed schemes.

We observe that  we do not need to use the
correction terms to get second-order accuracy when  computing solutions at time far from $t=0$.
Hence, we can still  use
the method \eqref{scheme3-1}--\eqref{scheme3-3}   to solve \eqref{app-sec6:eq2}, but the operator
$\mathcal{{A}}_{0,-1}^{{\alpha_k},n,m_k}(k=1,2)$ can be replaced with
$\mathcal{{A}}_{0,-1}^{{\alpha_k},n}$  when $n\geq\lceil n_T/5\rceil$, i.e., $w_{n,r}=0$
in \eqref{s31-1-3} for $n\geq\lceil n_T/5\rceil$.
It is shown   in  Table  \ref{apx-tb2-2}
that similar error behaviors are obtained, compared to those results in Table  \ref{apx-tb2-1}.  This can be readily explained by the truncation  error defined in \eqref{s31-1}:
When $n\geq n_0$, $n_0$ is suitably large, the correction term in
$\mathcal{{A}}_{0,-1}^{{\alpha_k},n,m_k}\,(k=1,2)$
contributes little to   accuracy and
convergence rate of  the method \eqref{scheme3-1}--\eqref{scheme3-3}.

\begin{table}[!ht]
\caption{{The  average $L^{2}$ errors $\left(\tau\sum_{n=0}^{n_T}\|u_N^{n}-U(t_n)\|^2\right)^{1/2}$
for Example \ref{exm:throw-at-far-field}, $N=(16,16)$,  {$T=1$}.}}\label{apx-tb2-1}
\centering\footnotesize
\begin{tabular}{|c|c|c|c|c|c|c|c|c|}
\hline
 $\tau$    & L1-SEM & Order & $m=1$ & Order& $m=2$ & Order& $m=3$ & Order \\
 \hline
$2^{-7}$ &6.3514e-4&    &1.5330e-4&    &1.3581e-5&    &1.5120e-5&    \\
$2^{-8}$ &3.3779e-4&0.91&6.1717e-5&1.31&7.5292e-6&0.85&4.1216e-6&1.87\\
$2^{-9}$ &1.7322e-4&0.96&2.4066e-5&1.35&3.1527e-6&1.25&8.6364e-7&2.25\\
$2^{-10}$&8.4504e-5&1.03&9.1040e-6&1.40&1.1281e-6&1.48&1.2301e-7&2.81\\
$2^{-11}$&3.7468e-5&1.17&3.2316e-6&1.49&3.5344e-7&1.67&2.3307e-8&2.39\\
\hline
\end{tabular}
\end{table}

\begin{table}[!ht]
\caption{{The  average $L^{2}$ errors $\left(\tau\sum_{n=0}^{n_T}\|u_N^{n}-U(t_n)\|^2\right)^{1/2}$
for Example \ref{exm:throw-at-far-field}, $N=(16,16), {T=1}$, and  $w_{n,k}=0$ in \eqref{scheme3-1} for $n\geq\lceil n_T/5\rceil$,
see also $w_{n,k}$ in \eqref{s31-1-3}.}}\label{apx-tb2-2}
\centering\footnotesize
\begin{tabular}{|c|c|c|c|c|c|c|c|c|}
\hline
 $\tau$ &$m=0$ &Order&$m=1$ & Order& $m=2$ & Order  & $m=3$& Order\\
 \hline
$2^{-7}$ &6.3706e-4&    &1.5347e-4&    &1.3536e-5&    &1.5065e-5&    \\
$2^{-8}$ &3.3836e-4&0.91&6.1769e-5&1.31&7.5037e-6&0.85&4.1149e-6&1.87\\
$2^{-9}$ &1.7304e-4&0.96&2.4081e-5&1.35&3.1433e-6&1.25&8.6822e-7&2.24\\
$2^{-10}$&8.4077e-5&1.04&9.1086e-6&1.40&1.1249e-6&1.48&1.3063e-7&2.73\\
$2^{-11}$&3.7071e-5&1.18&3.2329e-6&1.49&3.5248e-7&1.67&2.8196e-8&2.21\\
\hline
\end{tabular}
\end{table}


\subsection{The $L1$ method and fractional trapezoidal rule used in Example 5.2}
\begin{itemize}
  \item L1 method:  The Caputo derivatives in \eqref{sec6:eq0-2}  are discretized by the L1 method, and the corresponding scheme is given by
\begin{equation}\label{L1}
\sum_{j=1}^2\sum_{k=0}^{n-1}b^{(\alpha_j)}_{n-k-1}(y^{k+1}-y^k)
= f(t_n,y^n),{\quad}y^0=Y_0,
\end{equation}
where  $f(t,Y)=Y(1-Y^2)+\cos(t)$ and
$b^{(\alpha)}_k=\frac{\tau^{-\alpha}}{\Gamma(2-\alpha)}[(k+1)^{1-\alpha}-k^{1-\alpha}]$.

  \item Trapezoidal rule method: We   transform \eqref{sec6:eq0-2} into its integral form as $Y(t)+D^{-(\alpha_1-\alpha_2)}_{0,t}(Y(t)-Y(0))= Y(0)+D^{-(\alpha_1-\alpha_2)}_{0,t}(Y(t)(1-Y^2(t))+\cos(t))$, then the trapezoidal rule is applied to the fractional integrals. The corresponding   scheme is given by
\begin{equation}\label{trapezoidal}
(y^n -y^0)+ \sum_{k=0}^{n}a^{(\alpha_1-\alpha_2)}_{n,k}(y^k-y^0)
= \sum_{k=0}^{n}a^{(\alpha_1)}_{n,k}f(t_k,y^k),{\quad}y^0=Y_0,
\end{equation}
where   $f(t,Y)=Y(1-Y^2)+\cos(t)$ and $a^{(\alpha)}_{n,k}$ is given by
\begin{equation*}
a^{(\alpha)}_{n,k}=\frac{\tau^{\alpha}}{\Gamma(2+\alpha)}\left\{\begin{aligned}
&(n-1)^{\alpha+1}-(n-1-\alpha)n^{\alpha},{\quad}k=0,\\
&(n-k+1)^{\alpha+1}-2(n-k)^{\alpha+1}+(n-k-1)^{\alpha+1},\\
&{\qquad\qquad}1\leq k \leq n-1,\\
&1,{\quad}k=n.
\end{aligned}\right.
\end{equation*}
\end{itemize}

\end{document}